\newtheorem{theorem}{Theorem}[section]
\newtheorem*{theorem*}{Theorem}
\newtheorem*{theoremBrunnian-tunnel}{Theorem \ref{Brunnian result} (Tunnel Number Version)}
\newtheorem*{theoremBrunnian-bridge}{Theorem \ref{Brunnian result} (Bridge Number Version)}
\newtheorem*{theoremtunnel1}{Theorem \ref{tunnel result 1}}
\newtheorem*{theoremMorimoto}{Theorem \ref{thm: Morimoto}}
\newtheorem*{corollary-thetafact}{Corollary \ref{thetafact}}
\newtheorem*{corollarydistribution}{Corollary \ref{cor: distribution} (Simplified)}
\newtheorem*{theorem-bridgeresult1}{Theorem \ref{bridge result 1}}
\newtheorem{corollary}[theorem]{Corollary}
\newtheorem{lemma}[theorem]{Lemma}
\newtheorem{proposition}[theorem]{Proposition}
\theoremstyle{definition}
\newtheorem{remark}[theorem]{Remark}
\newcommand{\Z}{\mathbb{Z}}
\newcommand{\nil}{\varnothing}
\newcommand{\wihat}[1]{\widehat{#1}}
\newcommand{\defn}[1]{\textbf{#1}}
\renewcommand{\H}{\mathbb{H}}
\newcommand{\punct}[1]{\mathring{#1}}
\newcommand{\g}{\frak{g}}
\renewcommand{\t}{\frak{t}}
\renewcommand{\b}{\frak{b}}
\newcommand{\netchi}{\operatorname{net}\chi} %net Euler characteristic
\newcommand{\netextent}{\operatorname{netext}}
\newcommand{\extent}{\operatorname{ext}}
\newcommand{\boundary}{\partial}
\newcommand{\mc}[1]{\mathcal{#1}}
\newcommand{\cpt}{\sqsubset}
\newcommand{\thinsto}{\to}
\newcommand{\spacing}{
\parskip 6.6pt
\parindent 0pt
}
\begin{document}

\title{Tunnel number and bridge number of composite genus 2 spatial graphs}
   \author{Scott A. Taylor and Maggy Tomova}
   
   \begin{abstract} Connected sum and trivalent vertex sum are natural operations on genus 2 spatial graphs and, as with knots, tunnel number behaves in interesting ways under these operations. We prove sharp lower bounds on the degeneration of tunnel number under these operations. In particular, when the graphs are Brunnian $\theta$-curves, we show that the tunnel number is bounded below by the number of prime factors and when the factors are m-small, then tunnel number is bounded below by the sum of the tunnel numbers of the factors. This extends theorems of Scharlemann-Schultens and Morimoto to genus 2 graphs. We are able to prove similar results for the bridge number of such graphs. The main tool is a family of recently defined invariants for knots, links, and spatial graphs that detect the unknot and are additive under connected sum and vertex sum. In this paper, we also show that they detect trivial $\theta$-curves. 
 \end{abstract}

\dedicatory{
For Martin Scharlemann, in gratitude for years of encouragement and beautiful mathematics.
}
  
 \maketitle  

\section{Introduction}
\subsection{Tunnel number of composite graphs}
If $K$ is a knot, link, or spatial graph properly embedded in a closed 3-manifold $M$, we may embed arcs $\tau_1, \hdots, \tau_n$ (for some $n \geq 0$) in $M$ so that they are pairwise disjoint, have endpoints on $K$, interiors disjoint from $K$, and so that the exterior of the spatial graph $K \cup \tau_1 \cup \cdots \cup \tau_n$ in $M$ is a handlebody. The minimum number $\t(K)$ of arcs needed is the \defn{tunnel number} of $K$. The behavior of tunnel number for knots under connected sum of knots is rather mysterious. It is well known (and easy to prove) that for all knots $K_1$ and $K_2$, $\t(K_1 \# K_2) \leq \t(K_1) + \t(K_2) + 1$. There are examples of knots $K_1$ and $K_2$ in $S^3$, such that the inequality is sharp (see \cites{MR, MSY}) and other examples where the inequality is strict. In fact, the difference $\t(K_1) + \t(K_2) - \t(K_1 \# K_2)$ can be quite large \cite{Kobayashi}.  Scharlemann and Schultens \cite{SS} proved the well-known result that the sum of $n$ prime knots has tunnel number at least $n$. Morimoto \cite{Morimoto15} characterized the nontrivial knots $K_1$ and $K_2$ such that $\t(K_1 \# K_2) = 2$.  In particular, at least one of them must be a  2-bridge knot or (1,1) knot (see below, for the definition).  He also showed that the connected sum of $m$-small knots in 3-manifolds without lens space summands will have tunnel number at least the tunnel number of the summands.  See \cite{Schirmer} for a good overview of what is known concerning tunnel number for knots.  

As with knots, we can form the connected sum of trivalent spatial graphs and ask about the behavior of tunnel number. Eudave-Mu\~noz and Ozawa studied composite tunnel number 1 genus 2 spatial graphs where one summand is a knot \cite{EMO}. However, we can also perform vertex sums and ask how tunnel number behaves. For $\theta$-curves, the vertex sum acts as a connected sum on cycles. Additionally, passing to branched double covers over a cycle lifts the vertex sum of $\theta$-curves to the connected sum of knots, so we might expect tunnel number of composite $\theta$-curves to behave similarly to knots. However, things are not that simple. Deferring some definitions until later, our main result is:

\begin{theoremtunnel1}
Suppose that $(M,T)$ is an irreducible composite (3-manifold, graph) pair such that every sphere in $M$ separates and $T$ is a genus 2 graph. Then
\[
\t(M,T) \geq \frac{m - 1}{2} + k
\]
where $m$ is the number of factors in a prime factorization that are genus 2 graphs which are not the trivial $\theta$-curves or Hopf graphs and $k$ is the number of factors that are knots which are not $(1,0)$-curves.
\end{theoremtunnel1}

A spatial $\theta$-curve in $S^3$ never has a Hopf graph (which is a kind of handcuff graph) or a (1,0)-curve (which is a core loop in a lens space) as a factor. Furthermore, if a $\theta$-curve has a trivial $\theta$-curve as a factor in a prime factorization then it was obtained by tying nontrivial local knots in some of the edges of a trivial $\theta$-curve. (See below for precise definitions.) Thus, we immediately have the corollary:

\begin{corollary}\label{thetacor}
If $T$ is a composite spatial $\theta$-curve in $S^3$ of tunnel number 1, then it is either the vertex sum of two or three prime $\theta$-curves, or is the connected sum of a prime $\theta$-curve and a nontrivial knot, or is the result of tying one nontrivial knot in an edge of a trivial $\theta$-curve.
\end{corollary}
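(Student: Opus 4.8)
The plan is to apply Theorem~\ref{tunnel result 1} directly to the pair $(S^3, T)$ and then read off the four possibilities by a short arithmetic and case analysis. First I would check the hypotheses: $M = S^3$ is irreducible, and by Alexander's theorem every sphere in $S^3$ bounds a ball and hence separates; a spatial $\theta$-curve is a genus 2 graph; and $T$ is composite by assumption. Granting that $(S^3,T)$ qualifies as an irreducible composite (3-manifold, graph) pair, Theorem~\ref{tunnel result 1} gives $1 = \t(S^3,T) \geq \frac{m-1}{2} + k$, which rearranges to the integer inequality $m + 2k \leq 3$ with $m,k \geq 0$.

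Next I would interpret $m$ and $k$ using the remarks preceding the corollary. In $S^3$ no factor of a $\theta$-curve is a Hopf graph or a $(1,0)$-curve, so $k$ counts exactly the nontrivial knot factors and $m$ counts the genus 2 factors that are not trivial $\theta$-curves. Then I would enumerate: $m+2k\leq 3$ forces $k\leq 1$, and after discarding $(m,k)=(0,0)$ and $(1,0)$, which correspond to the trivial $\theta$-curve and to a single prime $\theta$-curve and are excluded because $T$ is composite, the only surviving pairs are $(m,k)\in\{(2,0),(3,0),(1,1),(0,1)\}$.

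The last step is to translate each surviving pair into the geometric statement. Because the genus 2 factors of a $\theta$-curve are assembled by vertex sum and the knot factors by connected sum (tying a local knot in an edge), the pairs $(2,0)$ and $(3,0)$ give the vertex sum of two or three prime $\theta$-curves, and $(1,1)$ gives the connected sum of a prime $\theta$-curve with a nontrivial knot. In the case $(0,1)$ there are no nontrivial $\theta$-curve factors, yet $T$ is a composite $\theta$-curve, so its prime factorization must contain a trivial $\theta$-curve factor; invoking the stated fact that such a factor forces $T$ to be a trivial $\theta$-curve with nontrivial local knots tied in its edges, combined with $k=1$, yields precisely one knot tied in one edge. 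This exhausts the four cases in the corollary.

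The main obstacle I expect is not the arithmetic but the bookkeeping around prime factorizations: verifying that the irreducibility hypothesis of Theorem~\ref{tunnel result 1} genuinely holds for an arbitrary composite $\theta$-curve in $S^3$, and justifying that the genus 2 factors of a $\theta$-curve are themselves $\theta$-curves combined by vertex sum rather than other genus 2 graphs, so that the counts $m$ and $k$ carry their intended geometric meaning and the $(0,1)$ case is correctly identified through the trivial-factor phenomenon.
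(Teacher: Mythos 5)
Your proposal is correct and matches the paper's own (essentially immediate) proof: the corollary is deduced from Theorem \ref{tunnel result 1} together with the two facts stated just before it (a $\theta$-curve in $S^3$ has no Hopf graph or $(1,0)$-curve factors, and a trivial $\theta$-curve factor forces the ``local knots tied in a trivial $\theta$-curve'' description), followed by exactly your enumeration of the pairs $(m,k)$ satisfying $m+2k\le 3$. The hypothesis you flagged as a potential obstacle is not an issue: in the paper's terminology ``composite'' is defined only for nontrivial irreducible pairs, so irreducibility of $(S^3,T)$ is automatic, and it also follows directly since every sphere in $S^3$ separates (as the paper notes in the proof of Theorem \ref{Brunnian result}).
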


The lower bound in Theorem \ref{tunnel result 1} is sharp. Figure \ref{thetatunnel1} shows an example of a tunnel number one $\theta$-curve \[(S^3, \theta) = (S^3, \theta_1) \#_3 (S^3, \theta_2) \#_3 (S^3, \theta_3)\] with each pair nontrivial. The example is readily adapted to provide an example of the vertex sum of two nontrivial $\theta$-curves that has tunnel number one and an example of the vertex sum of $2n + 1$ $\theta$-curves having tunnel number $n$.  In those examples, each factor has tunnel number 0. For another example, the vertex sum of the Kinoshita graph \cite{Kinoshita} with any 2-bridge $\theta$-graph (see below for the definition) also has tunnel number 1. The Kinoshita graph has tunnel number 1 and the 2-bridge $\theta$-graphs have tunnel number 0.

\begin{figure}[ht!]
\includegraphics[scale=0.3]{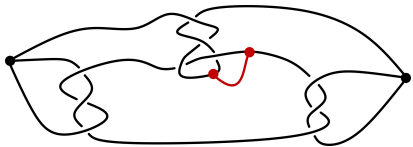}
\caption{A $\theta$-curve in $S^3$ that is the vertex sum of three nontrivial $\theta$-curves and which has tunnel number one. The $\theta$-curve is in black and an unknotting tunnel is drawn in red.}
\label{thetatunnel1}
\end{figure}

As previously discovered by Eudave-Mu\~noz and Ozawa there are also examples involving connected sum. Consider a tunnel number one knot $K$ in $S^3$. Let $\tau$ be a tunnel for $K$ with distinct endpoints on $K$. Tie a 2-bridge knot in $\tau$, to obtain the arc $\tau'$ and set $T = K \cup \tau'$. See Figure \ref{thetatunnel2} for an example.  Notice that $T$ is the connect sum of a nontrivial $\theta$-curve with a knot. An unknotted arc that is a tunnel for $\tau'$, is then also a tunnel for all of $T$. The paper \cite{EMO} gives a number of other examples of tunnel number 1 $\theta$-curves and handcuff curves that have a knot summand.

\begin{figure}[ht!]
\includegraphics[scale=0.5, angle=90]{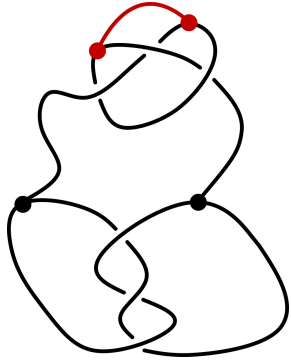}
\caption{A tunnel number one $\theta$-curve in $S^3$ that is the vertex sum of a nontrivial $\theta$-curve and a nontrivial knot. The $\theta$-curve is in black and an unknotting tunnel is drawn in red.}
\label{thetatunnel2}
\end{figure}

On the other hand, we do get the Scharlemann-Schultens lower bound if we consider only the class of \emph{Brunnian} $\theta$-curves. A spatial graph in $S^3$ is \defn{Brunnian} if every proper subgraph can be isotoped into a (tame) sphere, but the graph itself cannot be.  In particular, a nontrivial $\theta$-curve in $S^3$ is Brunnian if and only if every cycle is unknotted. It is easily shown that the factors in a prime factorization of a Brunnian $\theta$-curve or handcuff curve are also Brunnian\footnote{The terms \defn{almost unknotted}, \defn{minimally knotted}, and \defn{ravel} are also used in the literature; for graphs with more than 3 edges these terms may all have slightly different meanings, depending on the authors. }. Here is the first part of the statement of Theorem \ref{Brunnian result}.

\begin{theoremBrunnian-tunnel}
Suppose that $T \subset S^3$ is a composite Brunnian $\theta$-curve with $m$ factors in a prime factorization. Then
\[
\t(S^3,T) \geq m.
\]
\end{theoremBrunnian-tunnel}

The Kinoshita graph \cite{Kinoshita} is easily seen to have tunnel number one; so we observe both that the Kinoshita graph is prime and that the tunnel number of the trivalent vertex sum of $m$ copies of Kinoshita graph is at least $m$. Primality of the Kinoshita graph was previously known; see \cite[Example 2.5]{Ozawa} or \cite[Example 3.1]{CalcutMetcalfBurton}, for example. Makoto Ozawa pointed out to us that by \cites{GR, Morimoto96}, nontrivial $\theta$-curves of tunnel number 0 in $S^3$ are prime. The most significant prior work on the tunnel number of composite genus 2 graphs was done by Eudave-Mu\~noz and Ozawa \cite{EMO}. They classified all tunnel number one $\theta$-curves and handcuff curves that are the connected sum of a genus 2 curve and a knot, but did not consider trivalent vertex sum. Our methods could likely recapture and generalize both Morimoto's classification and Eudave-Mu\~noz and Ozawa's results. We have, however, avoided doing that in the interests of space. 

As with any inequality, we may ask under what circumstances (if any) the inequality is sharp. As we mentioned, Morimoto \cite{Morimoto15} studied this question when he analyzed the factors of a composite tunnel number two knot. We analyze when equality in Theorem \ref{tunnel result 1} holds. As in previous work, we find that an important role is played by so-called 2-bridge knots and graphs, Hopf graphs, and tunnel number one knots and graphs. Deferring definitions until later, (and stating the result only for $\theta$-curves) we show:

\begin{corollary-thetafact}
Suppose that $(M,T)$ is a connected, irreducible, composite pair with $T$ a $\theta$-graph and every sphere in $M$ separates. Also assume that no factor in a prime factorization of $(M,T)$ is a knot or $(0,2)$-curve\footnote{A $(0,2)$-curve is the genus 2 graph version of a 2-bridge knot and a (1,1)-curve is the genus 2 graph version of a knot in $S^3$ that is 1-bridge with respect to a Heegaard torus.}. If $(M,T)$ has $m$ factors and $\t(M,T)= \frac{m-1}{2}$, then $T$ has exactly 3 factors and they are all $(1,1)$-curves.
\end{corollary-thetafact}

More generally, we show

\begin{corollarydistribution}
Suppose that $(M,T)$ is a composite, connected, irreducible pair such that every sphere in $M$ separates and $T$ is a genus 2 curve.  Suppose that $(M,T)$ has $n$ factors, of which $m$ are genus 2 graphs that are not the trivial $\theta$-curve or a Hopf graph and $k$ of which are knots that are not $(1,0)$-curves.  If
\[
\t(M,T) = \frac{m-1}{2} + k
\]
then the number of factors that are trivial $\theta$-curves, trivial 2-bouquets, $(0,2)$-curves, or (1,1)-knots is at least $(n-3)/3$. 
\end{corollarydistribution}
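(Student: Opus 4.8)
The plan is to recover the statement from the equality analysis of the additive invariant that underlies Theorem~\ref{tunnel result 1}. Recall that the lower bound there is proved by combining two facts about the net extent $\netextent$: it is additive over a prime factorization, $\netextent(M,T)=\sum_i \netextent(M_i,T_i)$, and it is tied to tunnel number by an inequality of the form $2\t(M,T)+1\ge\netextent(M,T)$ coming from a reduced thin multiple bridge surface realizing $\netextent$. Each of the $m$ genus $2$ factors (not a trivial $\theta$-curve or a Hopf graph) contributes net extent at least $1$, each of the $k$ knot factors (not a $(1,0)$-curve) contributes at least $2$, and every other factor contributes $0$; hence $\netextent(M,T)\ge m+2k$ and $\t(M,T)\ge\frac{m-1}{2}+k$. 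The first step, then, is to note that the hypothesis $\t(M,T)=\frac{m-1}{2}+k$ forces all of these inequalities to be equalities: $2\t(M,T)+1=\netextent(M,T)$, every genus $2$ factor has net extent exactly $1$, and every knot factor has net extent exactly $2$. (Note the internal check that $m$ must then be odd, since $m+2k=2\t(M,T)+1$.)

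Second, I would identify the factors of minimal net extent. A nontrivial knot with net extent $2$ is a $(1,1)$-knot, and a genus $2$ graph other than a trivial $\theta$-curve or Hopf graph with net extent $1$ is a trivial $2$-bouquet, a $(0,2)$-curve, or a $(1,1)$-curve; this is where the unknot-detection results and the structural lemmas feeding Corollary~\ref{thetafact} are invoked, to show that nothing of larger complexity can realize the minimal value. At the end of this step every factor of $(M,T)$ is known to be one of: a trivial $\theta$-curve, a Hopf graph, a $(1,0)$-curve, a trivial $2$-bouquet, a $(0,2)$-curve, a $(1,1)$-knot, or a $(1,1)$-curve. Among these, the factors that are \emph{not} on the list to be counted are exactly the Hopf graphs, the $(1,0)$-curves, and the $(1,1)$-curves, so it suffices to bound the number of such non-special factors by $\tfrac{2}{3}n+1$.

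Third --- the substantive step --- I would control the non-special factors using the sharp case of Corollary~\ref{thetafact} as a local model. The identity $2\t(M,T)+1=\netextent(M,T)$ says that the thin surface realizing equality carries no slack, which constrains how the factors may be glued along the summing spheres; encoding the prime decomposition as a tree whose nodes are the factors and whose edges are the summing spheres, one sees that a maximal cluster of mutually adjacent non-special factors sustaining equality behaves exactly like the three-$(1,1)$-curve configuration of Corollary~\ref{thetafact}, and so cannot persist once it grows beyond size three without a special factor. Amortizing --- charging at most two non-special factors to each special factor, with at most three left uncharged (the exceptional configuration of Corollary~\ref{thetafact} itself) --- gives $\#\{\text{non-special}\}\le 2\,\#\{\text{special}\}+3$, equivalently $\#\{\text{special}\}\ge(n-3)/3$. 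The main obstacle is precisely this last step: converting the global equality in the net extent additivity into a genuinely local adjacency constraint, and in particular ruling out long chains of $(1,1)$-curves (and of Hopf and $(1,0)$-factors) unbroken by special factors. It is here, rather than in the numerical additivity, that the detailed analysis of the reduced thin surface is needed, and this is the heart of the unsimplified Corollary~\ref{cor: distribution} from which the present bound is read off.
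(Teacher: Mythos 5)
There is a genuine gap, in two places. First, your Step 2 classification is false: equality does \emph{not} force every factor to be on your short list. By Theorem \ref{low ne class}, a genus 2 graph with net extent exactly $1$ can also be a $(2,0)$-curve or an essential Hopf slinky, and a knot with net extent exactly $1$ can also be a $(0,2)$-knot, a $(2,0)$-knot (tunnel number one), a propeller knot, or a Hopf slinky. Net extent alone cannot distinguish a $(1,1)$-curve from a $(2,0)$-curve or a slinky --- all have net extent $1$ --- so no amount of ``unknot-detection'' sharpening will shrink the list to the one you state. These extra types are precisely the factors that the actual proof must work hardest to control, and your proposal never engages with them.

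Second, and more fundamentally, your Step 3 rests on a mechanism that is neither proven nor the one that actually operates. The paper's proof (via Theorem \ref{odd bound}) is purely a global numerical budget argument, with no adjacency or tree-clustering analysis at all. The Additivity Theorem supplies a \emph{second} constraint alongside additivity of net extent: the net Euler characteristics of the thin surfaces chosen for the factors satisfy $x_1 + \cdots + x_n \leq x - 2(n-1)$ with $x = 2\t(M,T)+2$. Each net-extent-$1$ factor type has a minimal realizable $\netchi$: it is $-2$ for trivial $\theta$-curves, trivial 2-bouquets and $(0,2)$-curves, $0$ for $(1,0)$-curves, Hopf graphs and $(1,1)$-curves, $+2$ for $(2,0)$-curves and length-$2$ slinkies, and $+4$ or more for propeller knots and longer slinkies. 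Substituting these costs into the budget and rearranging yields the inequality $n_0 + 2n_2 + 4n_+ \leq 3 + 2n_-$ of the proof of Corollary \ref{cor: distribution}, and the bound $(n-3)/3$ then follows by elementary algebra. In particular, a long chain of $(1,1)$-curves unbroken by ``special'' factors is ruled out not by any local gluing constraint but because each such factor costs $0$ in $\netchi$ while each decomposing sphere consumes $2$ from a fixed total; the obstruction is global, not local. Your proposed charging scheme arrives at the correct numerical form $\#\{\text{non-special}\} \leq 2\,\#\{\text{special}\} + 3$, but the claim on which it rests --- that a cluster of more than three mutually adjacent non-special factors cannot sustain equality --- is asserted without proof, and nothing in the machinery of the paper supports an adjacency-based statement of this kind. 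The missing idea in your write-up is the invariant $\netchi$ itself: it is the second coordinate, not net extent, that separates the cheap factors from the expensive ones.
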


The more general version of the theorem elaborates on the proportion of other types of spatial graph types showing up as the factors in a prime factorization where equality in Theorem \ref{tunnel result 1} is achieved.

We also prove a version of Morimoto's theorem for m-small knots\footnote{We note that even when all the factors are m-small, this result is not necessarily stronger than Corollary \ref{thetacor} since a spatial graph may have tunnel number 0 (i.e. have handlebody complement) but still contribute a positive amount to the tunnel number of a composite graph of which it is a factor. This is similar to how, for knots, the connected sum of two knots that are the cores of lens spaces (and thus have tunnel number 0) must have tunnel number one since the ambient 3-manifold has Heegaard genus 2.}.

\begin{theoremMorimoto}
Suppose that $(M,T)$ is an irreducible, composite pair with $T$ a $\theta$-curve or handcuff curve and where every sphere in $M$ separates. Let $(\wihat{M}_1, \wihat{T}_1), \cdots, (\wihat{M}_n, \wihat{T}_n)$ be the factors of a prime factorization of $(M,T)$ and suppose that each is m-small. Then
\[
\t(M,T) \geq \t(\wihat{M}_1, \wihat{T}_1) + \cdots + \t(\wihat{M}_n, \wihat{T}_n).
\]
\end{theoremMorimoto}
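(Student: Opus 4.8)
The argument runs through the additive invariant net extent $\netextent$, the member of the family advertised in the abstract that detects triviality and is additive under both connected sum and trivalent vertex sum. Recall that $\netextent(M,T)$ is the minimum of $\netextent$ over all multiple vp-bridge surfaces of $(M,T)$ (a thin position), so that any single vp-bridge surface gives an upper bound for it. The plan is to sandwich $\netextent(M,T)$ between two quantities built from tunnel numbers: an upper bound coming from a global tunnel system, and a lower bound coming from the prime factors, the latter sharpened by the m-small hypothesis.

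First I would record the global upper bound. A minimal tunnel system of $\t(M,T)$ arcs turns the exterior of $T$ into a handlebody, i.e. produces a Heegaard surface $\Sigma$ of that exterior of genus $\genus(N(T)) + \t(M,T)$. Reinserting $N(T)$, the surface $\Sigma$ becomes a vp-bridge surface of $(M,T)$ in which $T$ is spined into one compression body with no bridges, and its extent is a linear function of $\t(M,T)$. Since $\Sigma$ is one admissible surface, $\netextent(M,T) \le \extent(\Sigma)$, giving an upper bound on $\netextent(M,T)$ that is linear in $\t(M,T)$.

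Next I would combine the net extent lower bound, Theorem \ref{MainLowerBd}, which bounds $\netextent(M,T)$ from below by $\sum_i \netextent(\wihat M_i, \wihat T_i)$ up to the contributions of the summing spheres, with the m-small hypothesis. For the per-factor lower bound I would use that the defining feature of an m-small pair rules out exactly the essential meridional surfaces that the thin surfaces of a thin position must be; hence an m-small factor has no thin surfaces and its net extent is realized by a single vp-bridge surface $H_i$. Tubing $H_i$ along its bridges exhibits a Heegaard surface of the exterior of $\wihat T_i$, so $\t(\wihat M_i, \wihat T_i)$ is bounded above by the same linear function of $\extent(H_i) = \netextent(\wihat M_i, \wihat T_i)$. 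This yields $\netextent(\wihat M_i, \wihat T_i) \ge$ (linear in $\t(\wihat M_i, \wihat T_i)$), uniformly for both knot factors and genus $2$ graph factors, the normalization of extent being what makes trivial factors contribute $0$.

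Chaining the three relations and solving for $\t(M,T)$ gives the desired inequality. The main obstacle is making the constants match exactly: the extent of a vp-bridge surface must be normalized so that the genus $2$ graph or knot spine subtracts the correct fixed amount from $-\chi$; the summing-sphere contributions to the lower bound of Theorem \ref{MainLowerBd} must be tracked, since they enter $\netextent$ with a negative sign and differ between connected sum and vertex sum; and the genus count in the bridge-surface-to-Heegaard-surface conversion must be carried out carefully. The point is to arrange that all of these corrections cancel, leaving clean additivity of tunnel number rather than a weaker or an artificially stronger bound. The conceptual heart of the proof is the use of m-smallness to eliminate thin surfaces, which is precisely what restores the passage from net extent back to tunnel number for each individual factor.
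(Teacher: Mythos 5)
Your outline reproduces the paper's own strategy for Theorem \ref{thm: Morimoto}: bound $\netextent_x(M,T)$ above by the extent of a minimal genus Heegaard surface of $M \setminus T$, apply the Additivity Theorem (Theorem \ref{Add Thm}, together with Theorem \ref{unique roots} to identify the factors it produces with the given ones), use m-smallness to conclude that a locally thin surface for each factor has empty thin part --- since by (LT3) thin surfaces are c-essential and hence would be essential meridional surfaces in the factor's exterior --- and then convert the resulting connected vp-bridge surface $H_i$ back into a Heegaard surface of the factor's exterior. However, two of your steps fail as stated. The claim that ``tubing $H_i$ along its bridges exhibits a Heegaard surface of the exterior of $\wihat{T}_i$'' is false precisely when $H_i$ separates the two vertices of a $\theta$-curve or handcuff factor: in that case each edge meets $H_i$ an odd number of times, so after tubing along all bridge arcs on one side the surface still meets $\wihat{T}_i$ in vertical arcs (three of them for a $\theta$-curve; a vertical arc and a ghost loop for a handcuff). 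One must additionally attach the frontier of a regular neighborhood of these leftover arcs on one side, raising the genus by $2$ (respectively $1$). Because of this, the per-factor estimate for a genus 2 factor is only $\netextent_{x_i}(\wihat{M}_i,\wihat{T}_i) \geq \t(\wihat{M}_i,\wihat{T}_i) + \frac{1}{2}$, not the stronger uniform bound your outline presumes; this vertex-separating case is exactly where the paper has to do its nontrivial case analysis.

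Second, your plan ``to arrange that all of these corrections cancel, leaving clean additivity'' cannot be realized: they do not cancel. Each trivalent summing sphere contributes $-\frac{1}{2}$, and there are $p_3 = m-1$ of them, where $m$ is the number of genus 2 graph factors, while each genus 2 factor contributes only $+\frac{1}{2}$ by the corrected estimate above. Chaining the inequalities therefore yields
\[
\t(M,T) \geq -\tfrac{1}{2} + \sum_{i=1}^n \t(\wihat{M}_i,\wihat{T}_i),
\]
which is half a unit short of the theorem. The paper closes this gap not by cancellation but by integrality: tunnel number is an integer, so the deficit of $\frac{1}{2}$ can be discarded. Your proposal never invokes this final, and essential, observation; without it the argument as described proves only the weaker inequality above.
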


\subsection{Bridge number of composite graphs}

For a knot, link, or spatial graph $T \subset S^3$, a \defn{bridge sphere} for $T$ is a sphere $H \subset T$ such that in the 3-balls on either side of $H$ there is a properly embedded disc containing the portions of $T$ on that side of the sphere and $T \setminus H$ is acyclic.\footnote{There has been some disagreement over the proper definition of bridge number, see \cites{Goda, Motohashi1}. Our definition is the same as in, for example, \cites{Motohashi1, Ozawa-bridge}.} The \defn{bridge number} $\b(T)$ is the minimum of $|H \cap T|/2$ over all bridge spheres for $T$. When $T$ is a knot or link, the bridge number is a positive integer. When $T$ is a spatial graph, the bridge number is a positive integer or half integer. The unknot is the unique knot of bridge number 1 and the trivial $\theta$-curve is the unique $\theta$-curve of bridge number 3/2.  Schubert's well known theorem \cite{Schubert} says that the quantity $\b - 1$ is additive under connected sum of knots. In particular, knots of bridge number 2 are prime. Inspired by that result, we might hope that $\theta$-curves of bridge number 2 are also prime. Motohashi \cite{Motohashi1} shows that this is not the case. In particular, the trivalent vertex sum of two 2-bridge $\theta$-curves can also have bridge number 2. She also shows that any composite bridge number 2 $\theta$-curve has factors that are 2-bridge and that such $\theta$-curves are not Brunnian. In fact, they are the union of an arc (actually a tunnel) with a 2-bridge knot\footnote{This is generalized in \cite{Ozawa}, where a classification of tangle decompositions of 2-bridge $\theta$-curves and handcuff curves is given.}. We improve on this and show:
\begin{theorem-bridgeresult1}
Suppose that $T \subset S^3$ is an irreducible composite genus 2 graph. Then
\[
\b(T) \geq \frac{m+3}{2} + k 
\]
where $m$ is the number of factors that are genus 2 graphs which are not the trivial $\theta$-curve and $k$ is the number of factors which are knots. Furthermore, if equality holds then every factor in a prime factorization of $T$ is a $(0,2)$-curve, trivial $\theta$-curve, or trivial 2-bouquet.
\end{theorem-bridgeresult1}

As with tunnel number, we get a stronger result for Brunnian $\theta$-curves. 
\begin{theoremBrunnian-bridge}
Suppose that $T \subset S^3$ is a Brunnian composite $\theta$-curve having $m$ factors in its prime factorization. Then
\[
\b(T) \geq m + \frac{3}{2}
\]
\end{theoremBrunnian-bridge}

The Kinoshita graph is an example of a Brunnian $\theta$-graph of bridge number 5/2, so we can also use the bridge number version of Theorem \ref{Brunnian result} to conclude it is prime. In \cite{JKLMTZ}, the authors construct Brunnian $\theta$-graphs with bridge number at most 3. By Theorem \ref{Brunnian result}, they are prime. 

Doll \cite{Doll} introduced bridge numbers with respect to higher genus surfaces. His definition can be adapted to spatial graphs. Our methods would also provide lower bounds on those invariants with respect to the number of factors. In the interests of space, we do not pursue this.

\subsection{The strategy}

In \cite{TT2}, we introduced new invariants of (3-manifold, graph pairs) $(M,T)$ and proved those invariants were additive under connected sum and $(-1/2)$-additive under trivalent vertex sum. One family of invariants we called ``net extent'' and denoted it by $\netextent_x(M,T)$ where $x$ is any even integer at least $2\g(M) -2$, and $\g(M)$ is the Heegaard genus of $M$.  For each $x$, the invariant $\netextent_x(M,T)$ is a non-negative integer or half-integer. For a fixed pair  $(M,T)$, $\netextent_x(M,T)$ is a decreasing sequence in $x$ and so is eventually constant at some term, which we denote by $\netextent_\infty(M,T)$. In addition to behaving well under sums, it also (in a certain sense) detects the unknot. Furthermore,  if $H \subset M\setminus T$ is a Heegaard surface for the exterior of $T$ in $M$, then
\[
\g(H) - 1 \geq \netextent_\infty(M,T)
\]
and if $T$ is a spatial graph in $S^3$, then also
\[
\b(T) - 1 \geq \netextent_\infty(M,T)
\]
Thus, we can use the additivity properties of net extent to derive lower bounds on the tunnel number and bridge number of spatial graphs.  In \cite[Theorem 7.6]{TT2}, we applied this philosophy to prove generalizations of the Scharlemann-Schultens theorem and Morimoto's theorems for knots. The purpose of this paper is to apply the same philosophy to \defn{genus 2 spatial graphs}; that is, connected graphs of Euler characteristic -1, embedded in a 3-manifold. To that end, it is helpful to briefly review the strategy.

Suppose that $(M,T)$ is composite and satisfies certain other mild hypotheses we will explain later. Let $H$ be either a bridge sphere for $T$ or a Heegaard surface for $M \setminus T$. Using the definition and additivity properties of net extent, we are able to conclude that we have
\[
\frac{-\chi(H) + |H \cap T|}{2} \geq \netextent_\infty(M,T) = c + \sum\limits_{i=1}^n \netextent_\infty(\wihat{M}_i, \wihat{T}_i)
\]
where $(\wihat{M}_i, \wihat{T}_i)$ for $i = 1, \hdots, n$ are the factors in a particular prime decomposition of $(M,T)$ and $c$ is a constant depending (in a very weak way) on the decomposition. Our most basic lower bounds on the tunnel number and bridge number of a composite graph are obtained by bounding $\netextent_\infty(\wihat{M}_i, \wihat{T}_i)$ below for each $i$. When $\wihat{T}_i$ is a knot, the unknot detection properties proved in \cite{TT2} are what we need. When $\wihat{T}_i$ is a genus 2 graph, we prove that in most cases, $\netextent_\infty(\wihat{M}_i, \wihat{T}_i) \geq 1$. In Section \ref{sec:pairs of low extent}, we define the graph types which turn out to represent all genus 2 spatial graphs having net extent 1 and show they are not Brunnian. In Section \ref{sec:small extent},  we prove the classification of the genus 2 graphs having $\netextent_\infty(\wihat{M}_i, \wihat{T}_i) = 1$. With some exceptions, these correspond to spatial graph-theoretic versions of tunnel number 1 knots. This allows us to draw conclusions about the factors of a composite genus 2 spatial graph achieving the minimum tunnel number or bridge number relative to the number of components. It also lets us prove our lower bound on the tunnel number and bridge number of composite Brunnian graphs. 

Section \ref{sec: notationterm} introduces notation and terminology, including the definition of net extent.  In Section \ref{sec:thin position} we introduce the notion of thin position which is key to proving our results.  In Section \ref{sec:vp-compressionbodies} we analyze vp-compressionbodies of low complexity. In Sections \ref{sec:pairs of low extent} and \ref{sec:small extent} we discuss types of (graph, manifold)-pairs of low net extent. In Section \ref{sec:lower bound} we prove the lower bound results and finally in Section \ref{sec:equality} we study the cases where equality is achieved.

\subsection{Acknowledgements} 
Thanks to Makoto Ozawa for helpful comments on the history of the topology of spatial $\theta$-curves. Taylor was supported by a research grant from Colby College and Tomova was supported by a grant from the NSF.

\section{Notation and Terminology}\label{sec: notationterm}

We follow terminology introduced in \cite{TT1} and \cite{TT2}; which in turn was inspired by \cite{HS}. All 3-manifolds and surfaces we encounter are compact and orientable. For submanifolds $X,Y$ of a 3-manifold $M$, we let $X \setminus Y$ denote the complement of an open regular neighborhood of $Y$ in $X$ and $|X|$ the number of connected components of $X$. So, for example, if $K \subset M$ is a knot, then $M \setminus K$ is the exterior of $K$. We write $X \cpt Y$ to mean that $X$ is a path-component of $Y$. The \defn{genus} $\g(M)$ of a 3-manifold $M$ is the minimum $g$ such that $M$ admits a Heegaard surface of genus $g$. For any connected spatial graph $T$ in a closed 3-manifold $M$, the tunnel number $\t(M,T) = \g(M\setminus T) + \chi(T) - 1$, where $\g(M\setminus T)$ is the Heegaard genus of the exterior of $T$ in $M$ and $\chi(T)$ is the Euler characteristic of $T$.

\subsection{Pairs and Prime Factorizations}

A (3-manifold, graph) \defn{pair} $(M,T)$ consists of a compact, orientable 3-manifold (possibly with boundary) and a properly embedded graph (i.e. 1--complex) $T \subset M$ such that no vertex of $T$ has degree 2 and no component of $\boundary M$ is a sphere intersecting $T$ two or fewer times. Usually we also assume that every sphere in $M$ separates $M$, although this assumption could be weakened. Its use arises from some facts we appeal to from \cite{TT2} and in Theorem \ref{unique roots} below.  We do allow $T$ to have components that are closed loops with no vertices.  As $T$ is embedded in a 3-manifold, we say it is a \defn{spatial graph}. A spatial genus 2 graph with a single vertex is a \defn{2-bouquet}; a spatial genus 2 graph with no loops is a \defn{$\theta$-curve}; a spatial genus 2 graph with two loops and one separating edge is a \defn{handcuff curve}. Figure \ref{AbstractTypes} depicts the abstract graph type of each type of genus 2 graph. These spatial graphs (and their regular neighborhoods, spatial genus 2 handlebodies) have recently gained attention for their rich topological, algebraic, and geometric structure and their applications to the study of certain biological processes (e.g. \cite{BuckODonnol}). Additionally, they make appearances in knot theory due to their connections with the study of tunnel number one knots and links (e.g. \cite{CM}), as well as other invariants such as unknotting number (e.g. \cite{Lackenby}).  As much as possible, we work with spatial graphs more generally. Some of the auxiliary results of this paper should prove useful in future work.

\begin{figure}[ht!]
\centering
\includegraphics{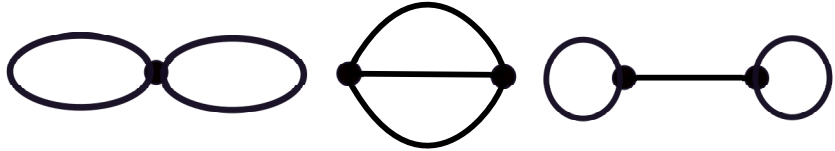}
\caption{The abstract graph types of the 2-bouquet, $\theta$-curve, and handcuff curve}
\label{AbstractTypes}
\end{figure}

If $T_1$ and $T_2$ are spatial graphs in 3-manifolds $M_1$ and $M_2$, we can form the connected sum $M_1 \# M_2$ of the ambient 3-manifolds by choosing points $p_1 \in M_1$ and $p_2 \in M_2$, removing a small regular neighborhood of each, and then gluing the new boundary spheres together by some homeomorphism $\psi$. If $p_1$ and $p_2$ are both internal to edges of $T_1$ and $T_2$ we arrive at the \defn{connected sum} $(M_1, T_1) \# (M_2, T_2)$, assuming we choose $\psi$ to take the punctures on one boundary sphere to the punctures on the other boundary sphere. If $p_1$ and $p_2$ are both vertices of degree $k \geq 3$, then we can similarly define the \defn{$k$-valent vertex sum} $(M_1, T_1)\#_k (M_2, T_2)$. The case when $k = 3$ (the \defn{trivalent vertex sum}) is the most important and has been extensively studied for $\theta$-curves in $S^3$. See \cite{Wolcott} for basic results and Figure \ref{AbstractSum} for a schematic depiction of connected sum and trivalent vertex sum for graphs in $S^3$. For $k \geq 4$, these sums are substantially less well-behaved. 

\begin{figure}[ht!]
\labellist
\small\hair 2pt
\endlabellist
\includegraphics[scale=0.4]{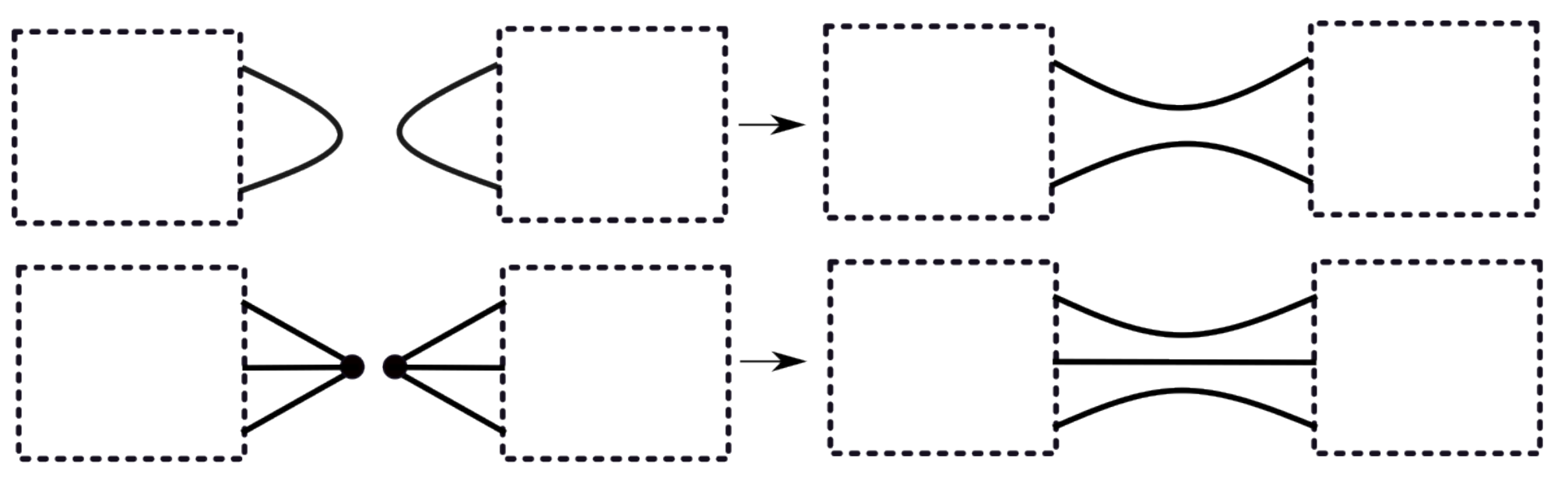}
\caption{The top row depicts the connected sum and the bottom row depicts a trivalent vertex sum.}
\label{AbstractSum}
\end{figure}

The trivalent vertex sum is a particularly natural operation on the set of (3-manifold, graph) pairs $(M,T)$ where $T$ is a $\theta$-curve or handcuff curve (see Figure \ref{AbstractTypes} for a depiction of the abstract graph types). Matveev and Turaev \cite{MT} show that the prime factorization of a pair $(M,T)$ such that every sphere in $M$ is separating and $T$ a $\theta$-curve is unique up to orientation choices and a certain equivalence related to the fact that connected sum for knots is commutative. Motohashi \cite{Motohashi2} has a similar result for handcuff curves in $S^3$. See Theorem \ref{unique roots} below for the version we use.

If $S \subset M$ is a properly embedded surface transverse to $T$, we write $S \subset (M,T)$. The points $S \cap T$ are the \defn{punctures} on $S$. A curve on $S$ is \defn{essential} if it does not bound either an unpunctured disc or a once-punctured disc on $S$. Throughout we use various generalizations of compressing discs. An \defn{sc-disc} for a surface $S \subset (M,T)$ or a component $S \subset \boundary M$ is a disc $D$ with interior disjoint from $S$, transverse to $T$, with $\boundary D \subset S$, and with $|D \cap T| \leq 1$ such that $D\setminus T$ is not properly isotopic, relative to $\boundary D$, in $M\setminus T$ into $S \setminus T$. If $\boundary D$ is essential in $S \setminus T$ and $|D \cap T| = 0$, then $D$ is a \defn{compressing disc}; if $|D \cap T| = 0$, but $\boundary D$ is inessential in $S \setminus T$, then $D$ is a \defn{semicompressing disc}. Analogously, if $|D \cap T| = 1$, then $D$ is a \defn{cut-disc} or \defn{semicut-disc} according to whether or not $\boundary D$ is essential or inessential in $S \setminus T$. If $(M,T)$ is irreducible (the definition is below), then $S$ has no semicompressing discs and if $D$ is a semicut-disc then $\boundary D$ bounds a once-punctured disc in $S$. If $S \subset (M,T)$ is a surface such that there is a compressing or cut-disc for $S$ in $(M,T)$, then $S$ is \defn{c-compressible}; otherwise $S$ is \defn{c-incompressible}. If $S \subset (M,T)$ is a sphere bounding a 3-ball disjoint from $T$, or if $S\setminus T$ is $\boundary$-parallel in $M \setminus T$, or if $S$ is c-compressible (respectively compressible), then $S$ is \defn{c-inessential} (respectively \defn{inessential}); otherwise, $S$ is \defn{c-essential} (respectively \defn{essential}). Notice that a surface $S \subset (M,T)$ may be such that $S \setminus T$ is $\boundary$-parallel in $M \setminus T$, even if $S$ is not $\boundary$-parallel in $M$, as the surface may be partially parallel to portions of $\boundary M$ and partially parallel to portions of the graph.

A pair $(M,T)$ is \defn{connected} if $M$ is connected, though $T$ need not be. It is \defn{trivial} if $M = S^3$ and if $T$ is isotopic into a tame sphere in $M$.  A pair $(M,T)$ is \defn{irreducible} if there is no essential sphere in $(M,T)$ disjoint from $T$ or intersecting $T$ in exactly one point\footnote{Some authors would also require that the exterior of the graph be $\boundary$-irreducible, though we do not. Our terminology is inspired by that for Heegaard splittings.}. A connected, irreducible nontrivial pair $(M,T)$ is \defn{prime} if there is no essential twice or thrice-punctured sphere $S \subset (M,T)$ that separates $M$. A nontrivial, irreducible pair $(M,T)$ is \defn{composite} if there is such a sphere. If the 3-manifold $M$ is clear, we will refer to $T$ as being trivial, or prime, or irreducible, or composite, etc. 

The trivial handcuff curve in $S^3$ is not irreducible, so it will not appear in what follows. One reason to implement the irreducibility hypothesis arises from the fact that if we take the connected sum of a trivial handcuff curve with a nontrivial knot in such a way that the summing point on the handcuff curve lies on the separating edge, the result is isotopic to the trivial handcuff curve. In particular the trivial handcuff curve can be decomposed as a nontrivial connected sum. 

A \defn{prime decomposition} of an irreducible, nontrivial pair $(M,T)$ is the realization of $(M,T)$ as the iterated connected sum and trivalent vertex sum of prime pairs and trivial pairs that are not the unknot.  For each trivial $\theta$-curve in the decomposition, we require that only connected sums (and not trivalent vertex sums) be performed on it. The pairs $(\wihat{M}_i, \wihat{T}_i)$ for $i = 1, \hdots, n$, that are being summed are called the \defn{factors} of the prime factorization. Notice that not all factors in a prime factorization of a connected, irreducible pair $(M,T)$ need be prime. However, if $T$ is a genus 2 curve, then in any prime factorization there is at most one trivial pair and, if there is, it is either a trivial $\theta$-curve or a trivial 2-bouquet and $T$ is the result of tying nontrivial knots in its edges.

\begin{remark}
Our definition of prime factorization of pairs $(M,T)$ with $T$ a $\theta$-curve differs slightly from that in other work, such as \cite{MT}. However, the difference is only significant when it comes to $\theta$ graphs in $S^3$ obtained by tying knots in the edges of a trivial $\theta$-graph. Even in those cases, it is relatively easy to move between the different definitions.
\end{remark}

On a few occasions we will use the following theorem, due to Hog-Angeloni and Matveev \cite{HAM}. We could avoid its use for most of the paper at the expense of making some statements somewhat more complicated. We also note that this theorem is not  a ``unique factorization'' result as commonly understood (see for example, \cites{MT, Motohashi1}) since such a result should also take into account the location of where the sums are performed and, ideally, handle nonseparating spheres.

\begin{theorem}[Hog-Angeloni--Matveev]\label{unique roots}
Suppose that $(M,T)$ is an irreducible pair such that every sphere in $M$ is separating. Then any two prime factorizations of $(M,T)$ contain the same prime factors. Consequently, if $T$ is a genus 2 curve or knot, any two prime factorizations contain the same factors (prime or not).
\end{theorem}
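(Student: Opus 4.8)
I would prove the statement by encoding each prime factorization as a system of decomposing spheres and then comparing two such systems by an innermost-disc argument, in the spirit of the Kneser--Milnor proof of unique factorization of $3$-manifolds, adapted to the punctured setting. Concretely, a prime factorization of $(M,T)$ is recorded by a finite collection $\mathcal{S}$ of pairwise disjoint separating spheres in $(M,T)$, each meeting $T$ transversally in exactly two points (a connected-sum summing sphere) or exactly three points (a trivalent-vertex-sum summing sphere), so that cutting $M$ along $\mathcal{S}$ and capping each new boundary sphere with a trivial ball--arc pair or trivial ball--vertex pair recovers the factors $(\wihat{M}_i, \wihat{T}_i)$. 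Since $(M,T)$ is irreducible, each summing sphere is c-essential, hence c-incompressible in $(M,T)$; and the hypothesis that every sphere in $M$ separates is exactly what rules out the $S^1 \times S^2$-type summands whose presence destroys uniqueness in the non-separating case.

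Given two factorization systems $\mathcal{S}$ and $\mathcal{S}'$, I would isotope them to meet transversally and to minimize the number of intersection circles; for dimension reasons every such circle is disjoint from $T$. If the systems still intersect, choose a circle $c$ innermost on some sphere $S' \in \mathcal{S}'$, bounding a disc $D' \subset S'$ whose interior misses $\mathcal{S}$, and observe that $c$ separates the sphere $S \in \mathcal{S}$ containing it into two discs $D_1, D_2$. The goal is to assemble from $D'$ together with $D_1$ or $D_2$ a sphere meeting $T$ in at most one point: irreducibility then forces it to bound a ball meeting $T$ trivially, across which $S$ can be isotoped so as to remove $c$, contradicting minimality. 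Producing a sphere with at most one puncture is where c-incompressibility of $S$ and $S'$ enters, since it controls whether $c$ can be essential in $S \setminus T$ or $S' \setminus T$ and hence how the two or three punctures of each summing sphere must be distributed among $D'$, $D_1$, $D_2$. Iterating this reduction makes $\mathcal{S}$ and $\mathcal{S}'$ disjoint.

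Once $\mathcal{S}$ and $\mathcal{S}'$ are disjoint, each sphere of $\mathcal{S}'$ lies in the closure of a single factor of the decomposition along $\mathcal{S}$. In a prime factor there is, by definition, no c-essential separating twice- or thrice-punctured sphere, so such a sphere of $\mathcal{S}'$ must be c-inessential there, i.e.\ parallel to a capping sphere or bounding a trivial ball--arc or ball--vertex pair, and may therefore be discarded without changing the factors. Running this symmetrically in both directions shows that after deleting redundant spheres the essential decomposing spheres of the two systems coincide up to isotopy, so the two prime factorizations produce the same multiset of prime factors; this is the first assertion, and it recovers the results of \cite{MT} and \cite{Motohashi1} as special cases. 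For the second assertion, recall the structural fact already noted in the text: when $T$ is a genus $2$ curve or a knot, a prime factorization contains at most one non-prime factor, and that factor is a trivial $\theta$-curve or trivial $2$-bouquet whose abstract type is pinned down by the fixed abstract type of $T$. Since the first part determines the prime factors and $T$ itself determines whether a single trivial factor is present and of which kind, the full lists of factors (prime or not) must agree.

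The step I expect to be the main obstacle is the disjointification in the presence of punctures. In the unpunctured case an innermost disc immediately yields a reducing sphere that bounds a ball by irreducibility, but here an innermost intersection circle need not bound an unpunctured disc, and a summing sphere carries two or three punctures whose distribution across $D_1$ versus $D_2$ must be tracked. Consequently the reduction has to be organized as an induction on the number of punctures enclosed by the innermost disc, invoking c-incompressibility (rather than ordinary incompressibility) to force the essential circles to be inessential and the enclosed punctures to be few, before the ball-swallowing isotopy can legitimately be applied; the thrice-punctured vertex-sum spheres are the delicate case, since three punctures admit more combinatorial configurations than two.
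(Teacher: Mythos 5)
Your approach diverges fundamentally from the paper's: the paper does not give a geometric argument at all, but deduces the statement from the theory of roots in Hog-Angeloni--Matveev \cite{HAM} (edge moves generate a partial order on equivalence classes of pairs; Theorem 7 of \cite{HAM} gives existence and uniqueness of the root; prime factorizations are then identified with representatives of the root). Your proposal instead attempts a from-scratch Kneser--Milnor style proof, and it contains a genuine gap precisely at the step you yourself flag as the main obstacle: the disjointification. The innermost-circle argument, as you set it up, aims to contradict minimality of $|\mathcal{S} \cap \mathcal{S}'|$, i.e.\ to achieve disjointness by \emph{isotopy}. But consider two thrice-punctured summing spheres $S \in \mathcal{S}$, $S' \in \mathcal{S}'$ meeting in a single circle $c$ that is essential in both $S \setminus T$ and $S' \setminus T$, with punctures distributed $2+1$ on $S'$ and $1+2$ on $S$. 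Then the innermost disc $D'$ carries two punctures, so $D' \cup D_1$ and $D' \cup D_2$ carry $3$ and $4$ punctures respectively: irreducibility (which only controls spheres with at most one puncture) says nothing, and c-incompressibility of $S$ is also silent, since $D'$ is not an sc-disc (it meets $T$ twice). No induction on the number of enclosed punctures rescues this, because such essential-essential intersections genuinely cannot always be removed by isotopy --- summing sphere systems realizing the same factorization need not be isotopic, which is exactly why the paper remarks that this theorem is \emph{not} a ``unique factorization'' result in the location-sensitive sense of \cites{MT, Motohashi1}. Your closing claim that after the reduction ``the essential decomposing spheres of the two systems coincide up to isotopy'' is false in general and is a symptom of the same misunderstanding.

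The standard repair is to abandon pure isotopy and instead modify one sphere system by \emph{surgery} (cut-and-paste along innermost discs, or ``swap'' moves), together with a lemma showing that the surgered system realizes the same multiset of factors; this is how Milnor-type uniqueness proofs actually run, and the edge-move calculus of \cite{HAM} is a systematic formalization of exactly this idea. In other words, the missing lemma in your write-up --- that intersections can be traded away while preserving factors --- is essentially the content of the cited work, so the proposal as written assumes the hard part. The second assertion of the theorem (passing from prime factors to all factors when $T$ is a genus $2$ curve or knot) is handled correctly in your last paragraph and matches the paper's reasoning; the defect is confined to, but fatal for, the first assertion.
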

\begin{proof}
This theorem is not stated as such in \cite{HAM}. We briefly explain how to obtain it from their work.  Sections 3 and 7 of \cite{HAM}, while not dealing strictly with (3-manifold, graph) pairs illuminate the distinction between their theory of roots and prime decompositions: the difference lies primarily in how nonseparating spheres are handled. According to \cite[Section 5]{HAM}, pairs are considered up to the equivalence relation generated by pairwise homeomorphism and disjoint union with trivial pairs $(M_0, T_0)$ where $T_0$ is a $\theta$-curve or unknot. The equivalence class of a pair $(M'',T'')$ is obtained from the equivalence class of a pair $(M', T')$ by an \defn{edge move} if it is obtained by decomposing $(M', T')$ along an essential sphere in $(M', T')$ intersecting $T'$ in at most three points. Edge moves induce a partial order on the set of equivalence classes of pairs. A \defn{root} for $(M,T)$ is a minimal element in this partial order that is less than or equal to the class of $(M,T)$. Theorem 7 of \cite{HAM} says that $(M,T)$ has a root $R(M,T)$ and this root is unique. Two choices of representatives for the class $R(M,T)$ are pairwise homeomorphic after discarding the components that are trivial $\theta$-curves and unknots. The components of a representative for $R(M,T)$ that are not trivial $\theta$-curves or unknots are factors of a prime decomposition of $(M,T)$. Conversely, since every sphere in $M$ is separating and since $(M,T)$ is irreducible, a prime factorization (in our sense) of $(M,T)$ results in a (not necessarily connected) pair $(M',T')$ containing no essential spheres with three or fewer punctures. Performing decompositions along  spheres sequentially shows that the prime factors of $(M',T')$ are a representative of the class of the root of $(M,T)$. The result follows from our definition of ``prime'' and ``prime factorization.''
\end{proof}

For a graph $\Gamma$, the \defn{leaves} of $\Gamma$ are the vertices of degree 1; the \defn{internal vertices} of $T$ are those of higher degree.  Suppose that $(M,T)$ is a pair. We let $(\punct{M}, \punct{T})$ denote the pair that results from removing an open regular neighborhood of the internal vertices of $T$ from both $M$ and $T$.  

\subsection{Surfaces and vp-compressionbodies}

For a (3-manifold, graph) pair $(M,T)$ and an embedded surface $S \subset (M,T)$ we write $(M,T)\setminus S$ to mean $(M\setminus S, T \setminus S)$.  A \defn{component} of $(M,T)\setminus S$ is a pair $(M_0, M_0 \cap T)$ where $M_0$ is a connected component of $M \setminus S$. Likewise, $(M_0, T_0) \cpt (M,T)\setminus S$ means that $M_0$ is a component of $M \setminus S$ and $T_0 = T \cap M_0$. If $F \subset (M,T)$ is a punctured sphere with at least 3 punctures, to \defn{surger} $(M,T)$ along $F$ means that we cut $(M,T)$ open along $F$ to obtain a pair $(M', T')$ with two new spherical boundary components and then crush those new boundary components to vertices. To surger along a twice-punctured sphere, we do the same thing but then absorb the resulting degree two vertices into an edge.

For a surface $S \subset (M,T)$, we define the \defn{extent} of $S$ to be
\[
\extent(S) = \frac{1}{2}\Big(-\chi(S) + |S \cap T|\Big)
\]
where $\chi(S)$ is the Euler characteristic of $S$.

A pair $(C,T_C)$ is a \defn{trivial ball compressionbody} if $C = B^3$ and $T_C \subset C$ is an unknotted arc. The pair $(C,T_C)$ is a \defn{trivial product compressionbody} if it is homeomorphic to $(F \times I, \text{ vertical arcs})$ for a closed, connected surface $F$. A connected pair $(C,T_C)$ with a component of $\boundary C$ specified as $\boundary_+ C$ is a \defn{vp-compressionbody}\footnote{The ``vp'' stands for ``vertex-punctured.''} if there exists a collection of pairwise disjoint sc-discs $\Delta$ for $\boundary_+ C$ in $(C,T_C)$ such that $(C,T_C)\setminus \Delta$ is the disjoint union of trivial ball compressionbodies and trivial product compressionbodies. For a more complete analysis of vp-compressionbodies, see \cite{TT1}. We let $\boundary_- C = \boundary C \setminus \boundary_+ C$; it may be the case that $\boundary_- C = \nil$, however $\boundary_+ C \neq \nil$. See Figure \ref{fig:vpcompbdy} for an example. Observe that $(\punct{C}, \punct{T}_C)$ is also a vp-compressionbody, with $\boundary_+ \punct{C} = \boundary_+ C$ and $\boundary_- \punct{C}$ the union of $\boundary_- C$ with spherical components corresponding to the internal vertices of $T_C$. The components of $\punct{T}_C$ can be partitioned into four types: vertical arcs (arcs with one endpoint on $\boundary_- \punct{C}$); bridge arcs (arcs with both endpoints on $\boundary_+ C$); ghost arcs (arcs with both endpoints on $\boundary_- \punct{C}$); and core loops (components disjoint from $\boundary \punct{C}$). The \defn{ghost arc graph} for $(C, T_C)$ is the graph with vertices the components of $\boundary_- \punct{C}$ and edges the ghost arcs of $\punct{T}_C$. A \defn{spine} for a compressionbody $C$ is the union of $\boundary_- C$ with a properly embedded graph $\Gamma \subset C$ such that $C$ deformation retracts to $\boundary_- C\cup \Gamma$ and $\Gamma$ is disjoint from $\boundary_+ C$. In many of the arguments that follow, it will often be the case that the  ghost arc graph of $(C, T_C)$ is a spine for $C$.

\begin{figure}
\centering
\includegraphics[scale=0.4]{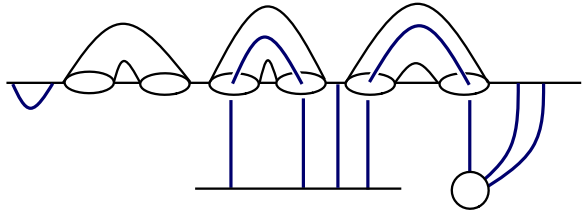}
\caption{A vp-compressionbody $(C,T)$ such that $g(\boundary_+ C) - g(\boundary_- C)  = 3$ and $\boundary_- C$ has two components, one of which is a thrice-punctured sphere. There are two ghost arcs, three vertical arcs, and one bridge arc.}
\label{fig:vpcompbdy}
\end{figure}

We will rely on two technical apparatuses for vp-compressionbodies. For a vp-compressionbody $(C, T_C)$, we define its \defn{index} to be 
\[
\delta(C,T_C) = \delta(\punct{C}, \punct{T}_C) = \extent(\boundary_+ C) - \extent(\boundary_- \punct{C}).
\]
We showed in \cite[Lemma 4.2]{TT2} that $\delta(C, T_C) \geq 0$. Notice that index is an integer since the Euler characteristic of a closed surface is even and since each edge of $\punct{T}_C$ either has both end points on $\boundary_+ C$, both endpoints on $\boundary_- \punct{C}$, or one endpoint on each. 

We use the next lemma throughout the paper, usually without comment.  

\begin{lemma}\label{ghostarcgraphlem}
Suppose that $(C,T)$ is a vp-compressionbody. Let $S$ be the surface that is a frontier of a regular neighborhood of $\boundary_- \punct{C}$ with the ghost arcs of $T$. Then $g(\boundary_+ C) \geq g(S)$. 
\end{lemma}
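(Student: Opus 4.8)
The plan is to reduce to a statement about an ordinary (graphless) compressionbody and then compare two ``tubings'' of $\boundary_- \punct{C}$. First I would pass to the vertex-punctured pair $(\punct{C}, \punct{T}_C)$, which is again a vp-compressionbody with $\boundary_+ \punct{C} = \boundary_+ C$; since this changes neither $\boundary_+ C$, nor the ghost arcs, nor $S$, it suffices to work there. Forgetting the spatial graph, cutting $\punct{C}$ along the underlying discs of a defining collection $\Delta$ of sc-discs yields trivial product pieces $F_j \times I$ and balls, which exhibits $\punct{C}$ as an ordinary compressionbody with positive boundary $\boundary_+ C$ and negative boundary $\boundary_- \punct{C}$ (see \cite{TT1}). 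Writing $\Gamma$ for the ghost arc graph, the surface $S$ is the frontier of a regular neighborhood $N$ of $\boundary_- \punct{C} \cup \Gamma$. My goal is to show that $\boundary_+ C$ is obtained from $S$ by attaching $1$-handles; since attaching a $1$-handle never lowers the total genus of a surface and $\boundary_+ C$ is connected, this yields $g(\boundary_+ C) \geq g(S)$ (interpreting $g(S)$ as the sum of the genera of its, possibly several, components). Core loops play no role, as they are disjoint from $\boundary_- \punct{C} \cup \Gamma$.

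The key step, which I expect to be the main obstacle, is to isotope $\Gamma$ into a spine of the compressionbody $\punct{C}$. Choosing a vertex in each trivial piece (collapsed onto its $\boundary_-$-face for the product pieces) and, dual to each disc of $\Delta$, an edge crossing that disc once, produces a spine $\boundary_- \punct{C} \cup \Sigma$, so that $\boundary_+ C$ is recovered as the frontier of a regular neighborhood of $\boundary_- \punct{C} \cup \Sigma$ in $\punct{C}$. A ghost arc meets the discs of $\Delta$ only in cut-discs, and since each cut-disc meets $T$ exactly once, distinct ghost arcs cross disjoint sets of cut-discs. Within each trivial piece, a segment of a ghost arc is either a vertical arc of a product (running from $\boundary_- \punct{C}$ to a cut-disc) or an unknotted arc in a ball (between two cut-discs); in either case it is properly isotopic, rel the cut-discs, to the corresponding dual spine edge. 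Assembling these piecewise isotopies should show that $\Gamma$ is isotopic to the subgraph $\Sigma_\Gamma \subseteq \Sigma$ spanned by the dual edges of the cut-discs it crosses. The delicate point here is arranging the per-piece isotopies to agree along the cut-discs, and disposing of any semicompressing or semicut discs in $\Delta$; the former can be handled by pushing endpoints consistently through the disc centers, and the latter contribute only trivial tubes and so do not affect the genus comparison.

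Granting $\Gamma \simeq \Sigma_\Gamma \subseteq \Sigma$, the surface $S$ is the frontier of a regular neighborhood of $\boundary_- \punct{C} \cup \Sigma_\Gamma$, while $\boundary_+ C$ is the frontier of the larger neighborhood of $\boundary_- \punct{C} \cup \Sigma$. The region between them is a regular neighborhood of the remaining edges $\Sigma \setminus \Sigma_\Gamma$, attached as $1$-handles to $S$, together with a collar of $\boundary_+ C$. Thus $\boundary_+ C$ is obtained from $S$ by attaching $1$-handles, and the genus inequality $g(\boundary_+ C) \geq g(S)$ follows as described above. I would present the reduction to $\punct{C}$ and the spine construction first, then carry out the ghost-arc isotopy, and finally record the $1$-handle/genus bookkeeping as a short concluding paragraph.
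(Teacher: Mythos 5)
Your proposal is correct and takes essentially the same approach as the paper, whose own proof is only a three-sentence sketch: take a complete collection of sc-discs, cut $(\punct{C}, \text{ghost arcs})$ into trivial pieces, and recover $\boundary_+ C$ by undoing the compressions. Your dual-spine formulation --- isotoping the ghost arc graph onto the subgraph of a spine dual to the cut-discs it crosses, with the remaining dual edges contributing $1$-handles that cannot lower total genus --- is precisely a fleshed-out version of that same reconstruction argument.
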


\begin{proof}[Proof Sketch]
 Take a complete collection of sc-discs $\Delta$ for $\boundary_+ \punct{C}$. Let $T'$ be the union of all the ghost-arcs of $\punct{T}$. By the definition of vp-compressionbody, $(C,T')\setminus \Delta$ is the union of trivial vp-compressionbodies. Reconstructing $\boundary_+ C$ from $\boundary_- C$ by undoing the compressions produces the desired result.
\end{proof}

The following corollary will be very useful and follows immediately.

\begin{corollary}\label{ghostarcgraph cor}
Suppose that $(C,T)$ is a vp-compressionbody and that $\Gamma$ is the ghost arc graph.
\begin{enumerate}
\item If $\boundary_+ C$ is a sphere, then so is each component of $\boundary_- C$ and $\Gamma$ is acyclic.
\item If $\boundary_+ C$ is a torus, then $\boundary_- C$ is the union of spheres and at most one torus. If $\boundary_- C$ contains a torus, then $\Gamma$ is acyclic. If $\boundary_- C$ does not contain a torus, then $\Gamma$ contains at most one cycle.
\end{enumerate}
\end{corollary}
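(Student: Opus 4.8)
The plan is to extract both conclusions directly from the genus inequality $g(\boundary_+ C) \geq g(S)$ provided by Lemma \ref{ghostarcgraphlem}, by computing $g(S)$ explicitly in terms of the ghost arc graph $\Gamma$. First I would describe $S$ concretely. It is the frontier of a regular neighborhood $N$ of $\boundary_- \punct{C}$ together with the ghost arcs, so it is obtained from the closed surfaces that are the components of $\boundary_- \punct{C}$ (these are the components of $\boundary_- C$ together with one sphere for each internal vertex of $T$) by attaching one tube along a regular neighborhood of each ghost arc. Thus every vertex of $\Gamma$ contributes its surface $F_v$ and every edge contributes a tube. Since $\boundary_+ C$ is connected while $S$ may not be, I interpret $g(S)$ as the total genus; reconstructing the connected surface $\boundary_+ C$ from $S$ only adds tubes, so the inequality $g(\boundary_+ C)\ge g(S)$ is correct with this convention.

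Next I would compute $g(S)$. Using $\chi(\boundary N) = 2\chi(N)$ together with $\boundary N = \boundary_- \punct{C} \sqcup S$ and $\chi(N) = \sum_v \chi(F_v) - |E(\Gamma)|$ gives
\[
\chi(S) = \sum_{v} \chi(F_v) - 2\,|E(\Gamma)|,
\]
and rearranging, with $r$ the number of components of $S$ (equivalently of $\Gamma$) and $b_1(\Gamma) = |E(\Gamma)| - |V(\Gamma)| + r$ the number of independent cycles of $\Gamma$, this becomes
\[
g(S) = \sum_{v} g(F_v) + b_1(\Gamma).
\]
The step I expect to require the most care is the topological reading of this formula: attaching the tubes of a spanning forest of $\Gamma$ merely amalgamates the vertex surfaces, so the genus is additive there, whereas each of the remaining $b_1(\Gamma)$ edges attaches a tube with both feet on an already-connected piece and hence raises the genus by exactly one. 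I would also note that the internal-vertex spheres have $g(F_v)=0$, so $\sum_v g(F_v)$ equals the total genus of $\boundary_- C$, which is what lets the bound speak about $\boundary_- C$ rather than $\boundary_- \punct{C}$.

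Finally I would combine this with Lemma \ref{ghostarcgraphlem}. For (1), $\boundary_+ C$ a sphere gives $0 = g(\boundary_+ C) \geq \sum_v g(F_v) + b_1(\Gamma)$; as both summands are non-negative they both vanish, so every component of $\boundary_- C$ is a sphere and $\Gamma$ is acyclic. For (2), $\boundary_+ C$ a torus gives $1 \geq \sum_v g(F_v) + b_1(\Gamma)$ with both terms non-negative integers, so at most one component of $\boundary_- C$ has positive genus and that genus is $1$; hence $\boundary_- C$ is a union of spheres and at most one torus. If a torus occurs then $\sum_v g(F_v)=1$ forces $b_1(\Gamma)=0$, so $\Gamma$ is acyclic; if no torus occurs then $\sum_v g(F_v)=0$ leaves $b_1(\Gamma)\le 1$, so $\Gamma$ has at most one cycle. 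This is precisely the stated dichotomy, and once the genus formula for $S$ is established everything else is immediate arithmetic with non-negative integers.
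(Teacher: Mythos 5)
Your proposal is correct and follows the same route as the paper: the paper states that Corollary \ref{ghostarcgraph cor} ``follows immediately'' from Lemma \ref{ghostarcgraphlem}, and your argument simply supplies the bookkeeping behind that claim, namely the formula $g(S) = \sum_v g(F_v) + b_1(\Gamma)$ and the observation that $g(S)$ must be read as \emph{total} genus (which is indeed what the lemma's tube-reconstruction proof gives, and is essential for ruling out a torus component and a cycle living in different components of $\Gamma$ in part (2)). Nothing further is needed.
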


\subsection{Multiple vp-bridge surfaces and net extent}

A connected surface $H \subset (M,T)$ is a \defn{vp-bridge surface} for $(M,T)$ if $(M,T) \setminus H$ is the union of two distict vp-compressionbodies $(C_i, T_i)$ for $i =1,2$ such that $\boundary_+ C_1 = H = \boundary_+ C_2$. A bridge sphere in $S^3$ is an example of a vp-bridge surface, as is a Heegaard surface for the exterior of a spatial graph in a closed 3-manifold.

We now define the central tool of this paper: the multiple vp-bridge surface. See \cite{TT1} for more details. Informally, a multiple vp-bridge surface cuts $(M,T)$ up into vp-compressionbodies. More formally, a surface $\mc{H} = \mc{H}^+ \cup \mc{H}^-\subset (M,T)$ is a \defn{multiple vp-bridge surface} if each of $\mc{H}^+$ and $\mc{H}^-$ is the union of components with no component belonging to both, and $(M,T)\setminus \mc{H}$ is the union of vp-bridge compressionbodies $(C, T_C)$ such that $\mc{H}^+ = \bigcup \boundary_+ C$ and $\mc{H}^- \cup \boundary M = \bigcup \boundary_- C$. The components of $\mc{H}^+$ are called \defn{thick surfaces} and those of $\mc{H}^-$ are \defn{thin surfaces}. Given a multiple vp-bridge surface $\mc{H}$, we consider the dual graph where each component of $(M,T)\setminus \mc{H}$ is a vertex. Edges correspond to the components of $\mc{H}$. Equipping each component of $\mc{H}$ with a normal orientation makes the dual graph into the \defn{dual digraph} for $\mc{H}$. Suppose that $v = (C, T_C)$ is a vertex of the dual digraph with $e$ the edge corresponding to $\boundary_+ C$. We insist that the normal orientations are such that if $e$ is oriented into $v$, then the edges corresponding to the components of $\mc{H}^- \cap \boundary_- C$ (if any) are all oriented out of $v$ and if $e$ is oriented out of $v$, then the edges corresponding to the components of $\mc{H}^- \cap \boundary_- C$ are all oriented into $v$. If under such a constraint, the dual digraph is acyclic we call $\mc{H}$ an \defn{oriented multiple vp-bridge surface}\footnote{This definition is easily seen to be equivalent to that in \cite{TT2}.}. For a pair $(M,T)$, we let $\H(M,T)$ denote the set of oriented multiple vp-bridge surfaces for $(M,T)$ up to isotopy transverse to $T$. Observe that assigning a normal orientation to a vp-bridge surface makes it into an oriented multiple vp-bridge surface.

%\begin{figure}
%\centering
%\includegraphics[scale=0.35]{}
%\caption{A schematic example of an oriented multiple vp-bridge surface. The thick lines represent $\mc{H}^+$; they are the positive boundary of the vp-compressionbodies. The thinner lines represent $\mc{H} \cup \boundary \punct{M}$; they are the negative boundary components of the vp-compressionbodies. The graph and the orientations on the surfaces have been suppressed.}
%\label{fig:multipvp}
%\end{figure}

For $\mc{H} \in \H(M,T)$ we define the \defn{net extent} of $\mc{H}$ to be:
\[
\netextent(\mc{H}) = \extent(\mc{H}^+) - \extent(\mc{H}^-)
\]
and the \defn{net Euler characteristic} to be
\[
\netchi(\mc{H}) = -\chi(\mc{H}^+) + \chi(\mc{H}^-).
\]

If $x \geq 2\g(M) - 2$, then there exists a vp-bridge surface $H$ for $(M,T)$ with $-\chi(H) = x$. We say that such an $x$ is \defn{realizable}. For any admissible $x$, we can therefore define an invariant
\[
\netextent_x(M,T) = \min \netextent(\mc{H})
\]
where the minimum is taken over all $\mc{H} \in \H(M,T)$ such that $\netchi(\mc{H}) \leq x$. As a sequence indexed by $x$, $(\netextent_x(M,T))$ is non-increasing and is eventually constant at $\netextent_\infty(M,T)$. The paper \cite{TT2} thoroughly studied this invariant. In the next section, we review some of its properties. First we establish a basic lemma:
\begin{lemma}\label{knotlem}
Suppose $(M,T)$ is an irreducible pair such that $T$ is a link. If  $\mc{H}\in \H(M,T)$ then $\netextent(\mc{H})$ is an integer. In particular, for all realizable $x$, $\netextent_x(M,T)$ is an integer.
\end{lemma}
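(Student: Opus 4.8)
The plan is to reduce integrality to a parity count for how $T$ meets $\mc{H}$. Expanding the definition of net extent,
\[
\netextent(\mc{H}) = \frac{1}{2}\big(-\chi(\mc{H}^+) + \chi(\mc{H}^-)\big) + \frac{1}{2}\big(|\mc{H}^+ \cap T| - |\mc{H}^- \cap T|\big).
\]
Because every component of $\mc{H}^+$ and of $\mc{H}^-$ is a closed orientable surface, the Euler characteristics $\chi(\mc{H}^\pm)$ are even, so the first term is already an integer. Hence the whole problem reduces to showing $|\mc{H}^+ \cap T| \equiv |\mc{H}^- \cap T| \pmod 2$, i.e. that $T$ meets $\mc{H} = \mc{H}^+ \cup \mc{H}^-$ in an even number of points. (Since $T$ is a link it misses $\boundary M$ entirely, so there are no boundary punctures to worry about.)

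The key step I would use is a $2$-coloring of the complementary regions. Each region of $(M,T) \setminus \mc{H}$ is a vp-compressionbody $(C, T_C)$ with a single positive boundary $\boundary_+ C$; label $C$ by $\mathrm{UP}$ if the chosen normal orientation of $\boundary_+ C$ points out of $C$ and by $\mathrm{DOWN}$ if it points into $C$. The claim is that across any component $S$ of $\mc{H}$ the two incident regions carry opposite labels. If $S$ is thick, then $S = \boundary_+ C_1 = \boundary_+ C_2$ and its normal orientation points out of one region and into the other, so the labels differ. If $S$ is thin, then $S \cpt \boundary_- C_1$ and $S \cpt \boundary_- C_2$, and here the orientation convention defining an oriented multiple vp-bridge surface---that the orientation of $\boundary_+ C$ relative to $C$ is opposite to that of the thin edges of $\boundary_- C$---converts the two opposite orientations of $S$ as a thin boundary into opposite $\mathrm{UP}/\mathrm{DOWN}$ labels for $C_1$ and $C_2$. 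Thus the regions are properly $2$-colored, adjacent regions always differing.

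With the coloring in hand I would finish as follows. Traverse a component $L$ of $T$, a circle. Away from any core-loop pieces it is cut by $\mc{H}$ into arcs, each lying in one region, and crossing a component of $\mc{H}$ moves $L$ into a region of the opposite color. Since $L$ is closed it returns to its starting region, which has a fixed color, so $|L \cap \mc{H}|$ is even (core loops contribute $0$, also even). Summing over the components of $T$ gives $|T \cap \mc{H}|$ even, completing the parity argument and showing $\netextent(\mc{H}) \in \Z$. The ``in particular'' statement then follows since $\netextent_x(M,T)$ is the minimum of the integers $\netextent(\mc{H})$ over the nonempty family, bounded below by $0$, of $\mc{H}$ with $\netchi(\mc{H}) \le x$, and a nonempty set of integers bounded below has an integer minimum. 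I expect the only real subtlety---the main obstacle---to be the thin-surface case of the coloring claim: carefully unwinding the orientation convention to confirm that opposite normal orientations of a thin surface genuinely induce opposite $\mathrm{UP}/\mathrm{DOWN}$ labels. An equivalent homological packaging (the coloring exhibits $\mc{H}$ as a $\Z/2$-boundary, so $[\mc{H}] = 0$ in $H_2(M;\Z/2)$ and every loop meets $\mc{H}$ evenly) gives the same conclusion and could be used as a shortcut.
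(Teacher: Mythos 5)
Your proof is correct, but it is not the paper's argument, and the comparison is instructive. The paper's proof is purely local: since $T$ is a link, every component of $T \setminus \mc{H}$ is a bridge arc, vertical arc, ghost arc, or core loop, and the paper reads off the parity of $|\mc{H}^+ \cap T| - |\mc{H}^- \cap T|$ from a tally of where the endpoints of these arcs lie. Your proof is global: you never classify arc types, and instead use the orientation convention built into the definition of $\H(M,T)$ to $2$-color the complementary vp-compressionbodies ($\mathrm{UP}$/$\mathrm{DOWN}$ according to whether $\boundary_+ C$ points out of or into $C$), verify the color flips across every thick and every thin component, and conclude that each circle of $T$ meets $\mc{H}$ evenly, i.e.\ $[\mc{H}] = 0$ in $H_2(M;\Z/2)$. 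What your route buys is exactly the global input that the local tally does not by itself supply: because each puncture of $\mc{H}$ is an endpoint of \emph{two} components of $T \setminus \mc{H}$, the endpoint count actually gives $|\mc{H}^+ \cap T| - |\mc{H}^- \cap T| = B - G$, where $B$ and $G$ are the numbers of bridge and ghost arcs, and the evenness of $B - G$ is equivalent to the evenness of $|\mc{H} \cap T|$ --- precisely the statement your coloring proves. Your argument also makes visible that the orientation convention is doing real work here: for \emph{unoriented} multiple vp-bridge surfaces one can assemble configurations (for instance, three vp-compressionbodies pairwise adjacent along thin surfaces, a pattern the orientation constraint forbids) in which a link crosses $\mc{H}$ an odd number of times and the net extent is a genuine half-integer. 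So your proof is, if anything, more self-contained than the paper's terse count, and slightly stronger, since it shows $|\mc{H} \cap T|$ itself is even; your treatment of the ``in particular'' clause (a nonempty, bounded-below set of integers has an integer minimum) matches what the paper leaves implicit.
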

\begin{proof}
By definition, 
\[
2\netextent(\mc{H}) = -\chi(\mc{H}^+) + \chi(\mc{H}^-) + |\mc{H}^+ \cap T| - |\mc{H}^- \cap T|.
\]
Since each component of $\mc{H}$ is a closed surface, the terms involving Euler characteristic are even. Since $T$ is a link, each component of $T \setminus \mc{H}$ is a bridge arc, vertical arc, ghost arc or core loop. The bridge arcs have both endpoints on $\mc{H}^+$; the vertical arcs have one endpoint on $\mc{H}^+$ and one on $\mc{H}^-$ (and not on $\boundary M$); the ghost arcs have both endpoints on $\mc{H}^-$ (and not on $\boundary M$). Thus, the quantity $|\mc{H}^+ \cap T| - |\mc{H}^- \cap T|$ is also even.
\end{proof}

\section{Background on Thin Position and some technical results}\label{sec:thin position}

To understand the relationship between tunnel number and sums, we use thin position. The classical theory of thin position for knots in $S^3$ is originally due to Gabai \cite{Gabai}. It was applied to the study of spatial graphs in $S^3$ by Scharlemann and Thompson \cite{ST2}, who also adapted it to handle structures on 3-manifolds \cite{ST1}. In \cite{HS}, Hayashi and Shimokawa extended the theory to apply to knots in arbitrary 3-manifolds. In \cite{TT1}, we adapted Hayashi and Shimokawa's approach to spatial graphs in arbitrary 3-manifolds and extended the theory to handle sc-discs in full generality. The upshot is that we defined, for an irreducible pair $(M,T)$, a partial order on $\H(M,T)$, denoted $\thinsto$. If $\mc{H} \thinsto \mc{K}$, we say that $\mc{H}$ \defn{thins to} $\mc{K}$. If $\mc{K} \in \H(M,T)$ is minimal in the partial order (i.e. $\mc{K} \to \mc{J}$ implies $\mc{K} = \mc{J}$) we say it is \defn{locally thin}. We showed  \cite{TT1} that for every $\mc{H} \in \H(M,T)$, there exists a locally thin $\mc{K} \in \H(M,T)$ such that $\mc{H} \to \mc{K}$. In which case, $\netextent(\mc{H}) \geq \netextent(\mc{K})$ and $\netchi(\mc{H}) \geq \netchi(\mc{K})$ \cite{TT2}.

Furthermore, if $\mc{H}$ is locally thin, the following hold \cite{TT1}:
\begin{enumerate}
\item[(LT1)] Every  $H \cpt \mc{H}^+$, has the property that any two sc-discs for $H$ on opposite sides of $H$ and disjoint from $\mc{H}^-$ have boundaries that intersect. (That is, $H$ is \defn{sc-strongly irreducible}.)
\item[(LT2)] If $(C,T_C) \cpt (M,T) \setminus \mc{H}$ is a trivial product compressionbody, then $\boundary_- C \subset \boundary M$.
\item[(LT3)] $\mc{H}^-$ is c-essential in $(M,T)$.
\item[(LT4)] If there is an essential sphere in $(M,T)$ with 3 or fewer punctures then there is such a sphere in $\mc{H}^-$.
\item[(LT5)] If some component of $\mc{H}$ is an unpunctured sphere, then $T = \nil$ and $M$ is either a 3-ball or $S^3$.
\end{enumerate} 
Notice that (LT2) implies that if we surger $(M,T)$ along the twice and thrice-punctured spheres in $\mc{H}^-$, then each component of the resulting pair is either prime or trivial. None of the trivial components can be a handcuff curve as that would contradict irreducibility of $(M,T)$. We will also need a weaker version of (LT3):
\begin{enumerate}
\item[(wLT3)] $\mc{H}^-$ is c-incompressible in $(M,T)$.
\end{enumerate}

We will also need to know a little more about the effect of certain thin surfaces. The proof is a simpler version of the proofs in Section \ref{sec:vp-compressionbodies}.

\begin{theorem}\label{thin props}
Suppose that $(M,T)$ is a connected, irreducible, prime (3-manifold, graph) pair. Suppose that $\mc{H} \in \H(M,T)$ is locally thin. Then the following hold:
\begin{enumerate}
\item If some component of $\mc{H}^+$ is a torus with no punctures then $\mc{H}$ is connected. Also $M$ is $S^3$, $S^1 \times S^2$, a lens space, a solid torus or $T^2 \times I$. The graph $T$, if nonempty, is either a core loop or Hopf link (that is, the union of cores of solid tori on opposite sides of $\mc{H}$).
\item If some component $(C,T_C) \cpt (M,T) \setminus \mc{H}$ has $\delta(C,T_C) = 0$ and $-\chi(\boundary_+ C) = -\chi(\boundary_- C)$, then either $(C, T_C)$ is a trivial product compressionbody with $\boundary_- C \subset \boundary M$ or  $|\boundary_+ C \cap T| \geq 2$ and $T_C$ contains a vertex of $T$.
\end{enumerate}
\end{theorem}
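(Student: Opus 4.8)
The plan is to prove both parts by combining three ingredients already available: the inequality $\delta(C,T_C) \geq 0$ (rewritten as $\extent(\boundary_- \punct{C}) \leq \extent(\boundary_+ C)$), the structural Corollary \ref{ghostarcgraph cor} on ghost arc graphs when $\boundary_+ C$ is a sphere or torus, and the local thinness properties (LT1)--(LT5). The common engine is a bookkeeping argument on extents: in an irreducible, prime pair where every sphere separates, each component that can appear in $\boundary_- \punct{C}$ has controlled extent. Indeed, a vertex sphere has $\geq 3$ punctures (extent $\geq 1/2$); a sphere in $\mc{H}^-$ must be c-essential by (LT3), hence (since c-essential implies essential, and irreducibility forbids $\leq 1$ punctures while primality plus the separating hypothesis forbid essential twice- and thrice-punctured spheres) it has $\geq 4$ punctures (extent $\geq 1$); a sphere in $\boundary M$ meets $T$ at least three times by the definition of a pair; and every torus has extent $\geq 0$. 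Thus \emph{every} component of $\boundary_- \punct{C}$ has nonnegative extent.

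For part (1), let $H \cpt \mc{H}^+$ be an unpunctured torus, so $\extent(H)=0$, and let $(C_i,T_{C_i})$, $i=1,2$, be the two vp-compressionbodies with $\boundary_+ C_i = H$. From $\delta \geq 0$ I get $\extent(\boundary_- \punct{C_i}) \leq 0$, while the engine above gives $\extent(\boundary_- \punct{C_i}) \geq 0$; hence it is $0$ and every component has extent exactly $0$. This kills all vertex spheres and all spheres, so $T_{C_i}$ has no internal vertices and, by Corollary \ref{ghostarcgraph cor}(2), $\boundary_- C_i$ is at most one unpunctured torus. Each side is therefore either a solid torus (with $T_{C_i}$ a core loop or empty, since $\delta=0$ forces $H$ unpunctured) or, if $\boundary_- C_i$ is a torus, a trivial product compressionbody $T^2 \times I$, in which case (LT2) puts $\boundary_- C_i \subset \boundary M$. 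Gluing the two sides along $H$ yields $M \in \{S^3, S^1\times S^2, \text{lens space}, \text{solid torus}, T^2\times I\}$ and $T$ a core loop or Hopf link; since each outer negative boundary lies in $\boundary M$ rather than $\mc{H}^-$, the two pieces exhaust $M$ and there are no thin surfaces, so $\mc{H}=H$ is connected.

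For part (2), the condition $\delta(C,T_C)=0$ means a complete collection of sc-discs contains no compressing discs (only cut discs), so $(\punct{C},\punct{T}_C)$ is assembled from trivial ball and product compressionbodies by cut-disc tubings; mirroring the classification of low-complexity vp-compressionbodies in Section \ref{sec:vp-compressionbodies}, the Euler characteristic bookkeeping together with $-\chi(\boundary_+ C) = -\chi(\boundary_- C)$ yields the identity
\[
|\boundary_+ C \cap T| = |\boundary_- C \cap T| + \sum_{w}\bigl(\deg(w) - 2\bigr),
\]
summed over the internal vertices $w$ of $T_C$. If $T_C$ has no internal vertex, the right-hand side is $|\boundary_- C \cap T|$ and the balanced Euler characteristic forces $(C,T_C)$ to be a trivial product compressionbody, whence $\boundary_- C \subset \boundary M$ by (LT2). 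If $T_C$ does contain a vertex, the identity immediately gives $|\boundary_+ C \cap T| \geq \deg(w)-2 \geq 1$, and the remaining task is to upgrade this to $\geq 2$.

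I expect this upgrade to be the main obstacle. The troublesome configuration is a single trivalent vertex whose two remaining ends form a self-loop realized as a ghost arc: this produces $\delta=0$, $\boundary_+ C = T^2$, $\boundary_- C = \nil$, yet only a single puncture, so it must be excluded. The plan is to rule it out using sc-strong irreducibility (LT1): such a piece is a solid torus whose only compressing datum is a meridional \emph{cut} disc, and pairing it with an sc-disc on the opposite side of $H$ forces their boundaries to intersect; analyzing the failure of this either exhibits a thinning move, contradicting local thinness, or produces a twice-punctured essential sphere, contradicting primality. Carrying out this final case analysis to pin down the sharp lower bound $|\boundary_+ C \cap T|\geq 2$ is precisely the "simpler version of the proofs in Section \ref{sec:vp-compressionbodies}" referred to in the statement, and is where the real work lies.
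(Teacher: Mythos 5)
Your part (1) is correct, and it is essentially the paper's own argument in different packaging: where you run an extent count to kill spherical components of $\boundary_-\punct{C}$, the paper counts vertical arcs at such components (a spherical leaf, isolated vertex, or degree-two vertex of the ghost arc graph would have to be incident to vertical arcs, which an unpunctured thick torus forbids); both arguments then invoke Corollary \ref{ghostarcgraph cor}, Lemma \ref{ghostarcgraphlem}, and (LT2), and conclude identically. Your extent identity in part (2) is also correct, as is the no-internal-vertex case (modulo glossing over bridge arcs and core loops, which the paper glosses equally).

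The genuine gap is exactly where you predicted ``the real work lies,'' and the problem is that the work you propose cannot be done. The ``lollipop'' configuration you aim to exclude --- $C$ a solid torus with $\boundary_- C = \nil$, $\punct{T}_C$ a ghost loop at a single vertex sphere together with one vertical arc, so $|\boundary_+ C \cap T| = 1$ --- is \emph{not} excluded by local thinness, irreducibility, or primality. It is realized on each side of the once-punctured torus bridge surface of a Hopf graph, which is a prime, irreducible, connected pair whose once-punctured torus the paper treats as locally thin: this is conclusion (2) of Theorem \ref{low ne class}, it arises in Case 1b of that theorem's proof, and the vp-compressionbody itself is conclusion (6) of Lemma \ref{toruscompbody}. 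So no analysis of (LT1), thinning moves, or essential twice-punctured spheres can rule it out, and your planned case analysis must fail. What actually disposes of this case in the paper is the Euler characteristic hypothesis itself: the paper reads $-\chi(\boundary_+ C) = -\chi(\boundary_- C)$ as saying that $C$ is a product $\boundary_- C \times I$ (in particular $\boundary_- C \neq \nil$; the solid torus is the unique compressionbody with empty negative boundary and balanced Euler characteristics, and it lies outside this intended reading --- when its positive boundary is unpunctured it is instead handled by part (1)). Once $C$ is a product, Lemma \ref{ghostarcgraphlem} forces the ghost arc graph to be acyclic --- the lollipop's ghost loop is precisely what this kills --- and the bound is then immediate: any component of the ghost arc graph containing an edge is a tree with at least two leaves, at most one of which is the connected surface $\boundary_- C$, so some leaf is a vertex sphere of $T$; being a leaf of degree at least three, it is incident to at least two vertical arcs, whence $|\boundary_+ C \cap T| \geq 2$. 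Your proposal never uses the product structure or the resulting acyclicity, and that --- not an sc-disc argument --- is the missing mechanism. A secondary error: your opening claim that $\delta(C,T_C)=0$ forces a complete sc-disc collection to contain no compressing discs is false; types (VP2) and (VP3) of Lemma \ref{The delta lemma} are solid tori whose meridian discs are compressing discs, and only type (VP4) has the property you assert.
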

\begin{proof}
 Suppose that some component $H$ of $\mc{H}^+$ is an unpunctured torus and $M$ is closed. Let $(C, T_C)$ and $(D, T_D)$ be the vp-compressionbodies of $(M,T)\setminus\mc{H}$ on either side of $H$.  Consider $(C, T_C)$. By definition, the compressionbody $C$ must be the result of removing some number (possibly zero) of open 3-balls from either a solid torus or $T^2 \times I$. Since $T \cap H = \nil$, each component of $\punct{T}_C$ is a ghost arc or a core loop. A leaf of the ghost arc graph corresponding to a spherical component of $\boundary \punct{C}$ would need to be incident to vertical arcs, so there are no such leaves. Similarly, an isolated vertex or a vertex of degree 2 cannot correspond to a sphere component of $\boundary \punct{T}_C$. Thus, $\boundary_- \punct{C}$ is either empty or a single torus, so $C$ is a solid torus or $T^2 \times I$. The graph $T_C$ is either a core loop or empty by Lemma \ref{ghostarcgraphlem}. By (LT 2), if $C= T^2 \times I$, then $\boundary_- C \subset \boundary M$. The same analysis holds for $(D, T_D)$ and Conclusion (1) follows.
 
Suppose now that $\delta(C, T_C) = 0$ for some $(C, T_C) \cpt (M,T)\setminus \mc{H}$. If $-\chi(\boundary_+ C) = -\chi(\boundary_- C)$, the compressionbody $C$ is a product $\boundary_- C \times I$. The ghost arc graph $\Gamma_C$ is acyclic by Lemma \ref{ghostarcgraphlem}. Suppose some component $\Gamma'$ contains an edge. Since $\Gamma'$ is a tree, it has at least two leaves, at most one of which can be $\boundary_- C$. Thus, some leaf of $\Gamma_C$ corresponds to a vertex of $T$. That vertex has degree at least 3, so is incident to at least two vertical arcs. Hence, $|\boundary_+ C \cap T| \geq 2$. Thus, (2) holds. If no component of $\Gamma_C$ contains an edge, then $\Gamma_C$ consists of isolated vertices. If there is more than one such vertex of $\Gamma_C$, then that vertex is also a vertex of $T_C$. It must be incident to at least 3 vertical arcs, as desired. By (LT2), we have Conclusion (2).
\end{proof}

We will also make use of the following lemma.
\begin{lemma}\label{crushing calc}
Suppose that $\mc{H} \in \H(M,T)$ satisfies (LT1), (LT2), (wLT3), (LT4), and (LT5) and that $\netextent(\mc{H}) = \netextent_x(M,T)$ for some $x \geq \netchi(\mc{H})$. Suppose that $F \cpt \mc{H}^-$ is a sphere with $p \geq 2$ punctures. Let $(\wihat{M}_1, \wihat{T}_1)$ and  $(\wihat{M}_2, \wihat{T}_2)$ be the result of surgering $(M,T)$ along $F$. Let $\mc{H}_i = \mc{H} \cap \wihat{M}_i$ for $i = 1,2$. Then all of the following hold:
\begin{enumerate}
\item $\mc{H}_1$ and $\mc{H}_2$ continue to satisfy (LT1), (LT2), (wLT3), (LT4), and (LT5).
\item $\netchi(\mc{H}) = \netchi(\mc{H}_1) + \netchi(\mc{H}_2) + 2$
\item $\netextent(\mc{H}) = \netextent(\mc{H}_1) + \netextent(\mc{H}_2) - (p-2)/2$
\item $\netextent_x(M,T) = \netextent_{x_1}(\wihat{M}_1, \wihat{T}_1) + \netextent_{x_2}(\wihat{M}_2, \wihat{T}_2) - (p-2)/2.$
where $x_i = \netchi(\mc{H}_i)$.
\end{enumerate}
\end{lemma}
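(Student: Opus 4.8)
The plan is to treat the two arithmetic identities (2) and (3) as direct bookkeeping, to deduce (4) from them together with the minimality hypothesis and a gluing argument, and to verify (1) condition by condition, the last being where the real work lies. Throughout I would use that every sphere in $M$ separates, so $F$ splits $M$ into pieces $M_1,M_2$, with $\wihat{M}_i$ obtained from $M_i$ by crushing the copy $F_i$ of $F$ to a vertex $v_i$ (absorbed into an edge when $p=2$). Every component of $\mc{H}$ is disjoint from $F$, so $\mc{H}$ restricts to a surface $\mc{H}_i\subset(\wihat{M}_i,\wihat{T}_i)$, and the first step is to check that $\mc{H}_i\in\H(\wihat{M}_i,\wihat{T}_i)$: the vp-compressionbodies of $(M,T)\setminus\mc{H}$ lying in $M_i$ are exactly those of $(\wihat{M}_i,\wihat{T}_i)\setminus\mc{H}_i$ (the one meeting $F$ acquires $v_i$ as a puncture-vertex of its negative boundary), the normal orientations restrict, and the dual digraph of $\mc{H}_i$ is a sub-digraph of that of $\mc{H}$, hence acyclic.

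For (2) and (3) I would simply compute. Since $\mc{H}^+$ is disjoint from $F$, we have $\chi(\mc{H}^+)=\chi(\mc{H}_1^+)+\chi(\mc{H}_2^+)$ and $|\mc{H}^+\cap T|=|\mc{H}_1^+\cap\wihat{T}_1|+|\mc{H}_2^+\cap\wihat{T}_2|$, whereas $\mc{H}^-$ differs from $\mc{H}_1^-\sqcup\mc{H}_2^-$ exactly by the component $F$, with $\chi(F)=2$ and $|F\cap T|=p$. Substituting into the definition of $\netchi$ gives (2) at once. Then using $\netextent(\mc{H})=\tfrac12\netchi(\mc{H})+\tfrac12\big(|\mc{H}^+\cap T|-|\mc{H}^-\cap T|\big)$ and collecting the $+2$ from (2) with the $-p$ coming from the punctures lost with $F$ yields (3). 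These are routine.

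For (4), the hypothesis $\netextent(\mc{H})=\netextent_x(M,T)$ together with (3) gives $\netextent_x(M,T)=\netextent(\mc{H}_1)+\netextent(\mc{H}_2)-(p-2)/2$; since $\mc{H}_i\in\H(\wihat{M}_i,\wihat{T}_i)$ with $\netchi(\mc{H}_i)=x_i$, the definition of $\netextent_{x_i}$ as a minimum gives $\netextent_x(M,T)\ge\netextent_{x_1}(\wihat{M}_1,\wihat{T}_1)+\netextent_{x_2}(\wihat{M}_2,\wihat{T}_2)-(p-2)/2$. For the reverse inequality I would choose, for each $i$, a minimizer $\mc{K}_i\in\H(\wihat{M}_i,\wihat{T}_i)$ with $\netchi(\mc{K}_i)\le x_i$ and $\netextent(\mc{K}_i)=\netextent_{x_i}(\wihat{M}_i,\wihat{T}_i)$, and glue them along $F$ (the operation inverse to the surgery, re-pairing the $p$ edge-ends at $v_1$ and $v_2$) to obtain $\mc{K}\in\H(M,T)$ with $F$ a thin sphere. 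The two acyclic digraphs are joined by the single edge $F$, so $\mc{K}$ is oriented, and the computations of (2) and (3) apply verbatim to $\mc{K}$, giving $\netchi(\mc{K})=\netchi(\mc{K}_1)+\netchi(\mc{K}_2)+2\le x_1+x_2+2=\netchi(\mc{H})\le x$ and $\netextent(\mc{K})=\netextent_{x_1}+\netextent_{x_2}-(p-2)/2$. Thus $\mc{K}$ is an admissible competitor, forcing the reverse inequality and hence equality. Note that (4) does not rely on (1).

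The main obstacle is (1). I would inherit (LT2), (wLT3), and (LT5) by locality: any unpunctured sphere in $\mc{H}_i$ is one in $\mc{H}$, and (LT5) for $\mc{H}$ would force $T=\nil$, contradicting $|F\cap T|=p\ge2$, so (LT5) is vacuous; a c-compression of $\mc{H}_i^-$ is a disc with at most one puncture, hence misses $v_i$ and is a c-compression of $\mc{H}^-$ in $(M,T)$, contradicting (wLT3); and a trivial product compressionbody of $(\wihat{M}_i,\wihat{T}_i)\setminus\mc{H}_i$ disjoint from $F$ is one of $(M,T)\setminus\mc{H}$ (with $\boundary\wihat{M}_i=\boundary M\cap M_i$), while the compressionbody meeting $F$ contains the genuine vertex $v_i$ when $p\ge3$ and so is no trivial product, the case $p=2$ needing only a direct check that absorbing $F$ into an edge creates no product with interior negative boundary. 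For (LT1), two sc-discs for a thick surface $H\subset\mc{H}_i^+$ disjoint from $\mc{H}_i^-$ can be isotoped off $F$ using the c-incompressibility of $F$ (innermost-disc arguments, since $F$ is a sphere in $\mc{H}^-$), becoming sc-discs disjoint from all of $\mc{H}^-$, whence (LT1) for $\mc{H}$ applies with boundaries unchanged on $H$. The genuinely delicate point is (LT4): given an essential sphere $S$ with at most three punctures in $(\wihat{M}_i,\wihat{T}_i)$, I would minimize $|S\cap\mc{H}_i|$ and argue, exactly as in the (simpler) arguments of Section~\ref{sec:vp-compressionbodies} using (LT1) and the incompressibility of $\mc{H}_i^-$, that $S$ may be made disjoint from $\mc{H}_i$ and hence lies in a single vp-compressionbody; the structure of vp-compressionbodies then forces $S$ to be parallel to a negative-boundary sphere, which is either a component of $\mc{H}_i^-$ (as desired) or the vertex link $F_i$. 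The latter is excluded because $F_i$ is $\boundary$-parallel in the exterior, hence c-inessential, contradicting essentiality of $S$. Carrying out this isotopy-into-the-thin-region argument cleanly, and dispatching the $p=2$ subcase of (LT2), is where the effort concentrates.
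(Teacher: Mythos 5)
Your overall route matches the paper's: conclusions (2) and (3) are the same direct computation, and for (1) you, like the paper, treat the transfer of the five properties as a local matter. Two comparative remarks. First, on (4) you actually do more than the paper: the paper proves only the inequality $\netextent_x(M,T) \geq \netextent_{x_1}(\wihat{M}_1,\wihat{T}_1) + \netextent_{x_2}(\wihat{M}_2,\wihat{T}_2) - (p-2)/2$ (exactly as you do, from minimality), and for the reverse inequality it cites \cite[Theorem 5.5]{TT2} and explicitly omits the argument since (4) is never used in the paper. Your gluing construction is the right idea for that direction; the one detail you elide is orientation compatibility: when $F$ is re-inserted as a thin surface, the sink/source constraint at the two vp-compressionbodies adjacent to $F$ may force you to reverse all normal orientations on one of $\mc{K}_1$, $\mc{K}_2$ before the union is an \emph{oriented} multiple vp-bridge surface; that is a harmless fix. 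Second, your (LT1) argument is framed wrongly even though the conclusion is right: an sc-disc in $(\wihat{M}_i,\wihat{T}_i)$ cannot meet $F$, because $F$ does not exist there. Transversality to $\wihat{T}_i$ forces such a disc to miss the vertex $v_i$, hence it already corresponds to a disc in $M$ disjoint from $F$, and (LT1) for $\mc{H}$ applies directly; no innermost-disc argument on $F$ is needed or even meaningful.

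The genuine gap is exactly the step you flag and defer: the $p=2$ case of (LT2). This is not ``only a direct check''; under the stated hypotheses the claim can fail. The mechanism: suppose the compressionbody $(C,T_C)$ adjacent to $F$ is a trivial product minus a ball, i.e. $C \cong \Sigma \times I$ with vertical arcs and with an open ball removed around an interior point of one arc, so that $\boundary_+ C = \Sigma\times 1$ is thick and $\boundary_- C = (\Sigma \times 0) \cup F$ consists of two thin surfaces. Such a $(C,T_C)$ is a legitimate vp-compressionbody (the frontier of a neighborhood of $F$ together with the arc from $F$ to $\boundary_+ C$ is a semicut disc cutting it into two trivial products), and it is \emph{not} a trivial product, so (LT2) for $\mc{H}$ says nothing about it. But $p=2$ surgery caps $F$ off, turning $\wihat{C}$ into a genuine trivial product whose negative boundary $\Sigma \times 0$ is a thin surface of $\mc{H}_i$, so (LT2) fails for $\mc{H}_i$. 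Nothing in (LT1), (LT2), (wLT3), (LT4), (LT5) or the minimality hypothesis rules this configuration out: one can realize it, for instance, with $(M,T)$ the three-component chain link in $S^3$, the two twice-punctured thin spheres being the evident summing spheres, a trivial ball compressionbody outside (which has no sc-discs, so (LT1) at that thick sphere is vacuous), and solid-torus/Hopf-clasp pieces at the two ends (where the only sc-discs are dual pairs of cut discs whose boundaries must intersect); this $\mc{H}$ even has net extent $0$, so minimality is automatic. Thus for $p=2$, conclusion (1) requires either an additional hypothesis (e.g. full local thinness, which provides consolidation moves) or the restriction $p \geq 3$. To be fair, the paper's own one-sentence proof of (1) has the same blind spot --- the ``vertices are treated as negative boundary components'' argument is airtight only when $F$ genuinely becomes a vertex, i.e. when $p \geq 3$ --- and the paper only ever applies the lemma to $4$-punctured spheres, so nothing downstream is affected. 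But a blind proof of the statement as written must either carry out or explicitly repair the $p=2$ case, and yours does neither.
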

\begin{proof}
The vertices of a graph are treated as negative boundary components of vp-compressionbodies, so $\mc{H}_1$ and $\mc{H}_2$ are multiple vp-bridge surfaces satisfying (LT1), (LT2), (wLT3), (LT4), and (LT5). (We note, however, that if $\mc{H}$ satisfies (LT3), $\mc{H}_1$ and $\mc{H}_2$ do not need to as $\mc{H}$ may have two thin surfaces that are parallel. If such is the case, then a thin surface can become $\boundary$-parallel after surgery.) The definition of net extent does treat vertices of the graph and boundary components of the 3-manifold differently from thin surfaces.  Hence, Conclusions (2) and (3) follow immediately from the fact that $\chi(F) = 2$ and $\extent(F) = (p-2)/2$. One inequality in Conclusion (4) can be proved by realizing that $\netextent(\mc{H}_i)$ is an upper bound for $\netextent_{x_i}(\wihat{M}_i, \wihat{T}_i)$. The other inequality can be proved as in  \cite[Theorem 5.5]{TT2}. We do not rely on it in this paper, so we omit the proof.
\end{proof}

Another technical lemma we will need can be found as Corollary 4.4 of \cite{TT2}. We restate it here for convenience.
\begin{lemma}\label{The delta lemma}
If $\delta(C,T_C) =0$, then $(\punct{C},\punct{T}_C)$ is one of the following:
\begin{enumerate}
\item[(VP1)] $(B^3, \text{ arc })$
\item[(VP2)] $(S^1 \times D^2, \nil)$
\item[(VP3)] $(S^1 \times D^2, \text{ core loop })$
\item[(VP4)] A vp-compressionbody such that each component of $\punct{T}_C$ is a vertical arc or ghost arc; there is no compressing or semicompressing disc for $\boundary_+ C$ in $(C, T_C)$; the ghost arc graph is connected, and the union of the ghost arcs with $\boundary_- \punct{C}$ is a spine for $\punct{C}$.
\end{enumerate}
\end{lemma}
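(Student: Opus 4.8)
The plan is to read the equality case $\delta(C,T_C)=0$ directly off the definition of index, using only a short Euler-characteristic count together with the genus estimate of Lemma \ref{ghostarcgraphlem}. Let $a$, $b$, $r$ be the numbers of vertical, bridge, and ghost arcs of $\punct T_C$, let $n$ be the number of components of $\boundary_-\punct C$ and $G$ the sum of their genera, and let $g_+$ be the genus of the connected surface $\boundary_+ C$. Since each vertical arc meets $\boundary_+ C$ and $\boundary_-\punct C$ once, each bridge arc meets $\boundary_+ C$ twice, and each ghost arc meets $\boundary_-\punct C$ twice, substituting into the definition of extent gives
\[
\delta(C,T_C)=\extent(\boundary_+ C)-\extent(\boundary_-\punct C)=g_+-1+b-G+n-r .
\]
Note that core loops and the vertical-arc count $a$ cancel out and do not appear. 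Setting $\delta=0$ therefore yields the key identity
\[
g_+ + b + n = 1 + G + r. \qquad (\ast)
\]

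Next I would feed in Lemma \ref{ghostarcgraphlem}. Let $\Gamma$ be the ghost arc graph, with $c(\Gamma)$ connected components, and let $S$ be the frontier of a regular neighborhood of $\boundary_-\punct C$ together with the ghost arcs. Tubing $\boundary_-\punct C$ along the $r$ ghost arcs gives $\chi(S)=\chi(\boundary_-\punct C)-2r$ and exactly $c(\Gamma)$ components, so the total genus of $S$ equals $G+r-n+c(\Gamma)$. Because $\boundary_+ C$ is recovered from $S$ by undoing the remaining compressions (which only tube components together and raise total genus), Lemma \ref{ghostarcgraphlem} gives $g_+\ge G+r-n+c(\Gamma)$. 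Combining this with $(\ast)$ collapses everything to the single inequality
\[
1-b\ge c(\Gamma).
\]

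The classification then falls out of this inequality. If $\boundary_-\punct C\neq\nil$ then $n\ge 1$, so $c(\Gamma)\ge 1$ and hence $b=0$ and $c(\Gamma)=1$: there are no bridge arcs and the ghost arc graph is connected. Moreover the estimate from Lemma \ref{ghostarcgraphlem} is now an equality, which should force the reconstruction of $\boundary_+ C$ to involve no genuine compressions; that is, $\boundary_-\punct C$ together with the ghost arcs is a spine of $\punct C$, and consequently $\boundary_+ C$ admits no compressing or semicompressing disc. After checking that this spine structure leaves no room for core loops, we are exactly in case (VP4). If instead $\boundary_-\punct C=\nil$, then $n=G=r=0$ and $c(\Gamma)=0$, so $(\ast)$ reduces to $g_++b=1$. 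The solution $(g_+,b)=(0,1)$ gives a ball containing a single bridge arc, namely (VP1); the solution $(g_+,b)=(1,0)$ gives a compressionbody with torus $\boundary_+$ and no negative boundary, i.e.\ a solid torus, and Corollary \ref{ghostarcgraph cor} together with the absence of boundary punctures forces $T_C$ to be empty, giving (VP2), or a single core loop, giving (VP3).

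The main obstacle is the equality step in the third paragraph: promoting the numerical equality $g_+=g(S)$ to the genuine topological statement that $\boundary_-\punct C\cup\Gamma$ is a spine and that $\boundary_+ C$ carries no (semi)compressing disc, and then excluding stray core loops in case (VP4). This is the point where one must look inside the proof of Lemma \ref{ghostarcgraphlem} rather than invoke it as a black box, and where care is needed to read $g(\boundary_+ C)\ge g(S)$ as an inequality of \emph{total} genus when $S$ is disconnected. Everything else is bookkeeping, with Corollary \ref{ghostarcgraph cor} cleanly disposing of the degenerate sphere and torus boundaries.
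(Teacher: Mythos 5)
First, a point of calibration: the paper contains no proof of Lemma \ref{The delta lemma} to compare against --- it is imported verbatim as \cite[Corollary 4.4]{TT2}. So you are necessarily giving an independent argument, and the question is whether it is complete. Your arithmetic skeleton is correct and checks out: $\delta(C,T_C)=g_+-1+b-G+n-r$ (vertical arcs and core loops do cancel), the identity $(\ast)$, the count that the total genus of $S$ is $G+r-n+c(\Gamma)$, and the resulting inequality $1-b\ge c(\Gamma)$, which correctly yields ``no bridge arcs and connected ghost arc graph'' when $\boundary_-\punct{C}\neq\nil$ and the dichotomy $(g_+,b)\in\{(0,1),(1,0)\}$ when $\boundary_-\punct{C}=\nil$.

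However, there are genuine gaps, at exactly the points you flag. The most serious is core loops: they cancel out of $\delta$, they are not vertices or edges of $\Gamma$, and they contribute nothing to $S$, so the inequality of Lemma \ref{ghostarcgraphlem}, used as a black box, is structurally incapable of detecting them --- and nothing else in your argument does. Concretely, you never rule out a core loop alongside the arc in case (VP1); you defer the issue in case (VP4); and in the solid-torus case the appeal to Corollary \ref{ghostarcgraph cor} does not do the claimed work, since that corollary constrains only $\boundary_- C$ and cycles of the ghost arc graph and says nothing about core loops (nor about why a lone loop must actually be a core, or why there cannot be two). The second gap is the equality-case conclusions of (VP4): that $\boundary_-\punct{C}$ together with the ghost arcs is a spine, and that $\boundary_+C$ admits no compressing or semicompressing disc, are asserted (``should force'', ``after checking'') rather than proved. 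These items are not bookkeeping; together with the core-loop exclusion they are the entire content of (VP4), i.e.\ precisely what \cite[Corollary 4.4]{TT2} exists to supply.

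Both gaps close if you open up Lemma \ref{ghostarcgraphlem} as you propose. Fix a complete collection $\Delta$ of sc-discs and let $D$ be the dual graph of the decomposition into trivial pieces (vertices the pieces, edges the discs); an Euler characteristic count gives $g_+=G+b_1(D)$. Each ghost arc traverses an embedded path in $D$ and each core loop an embedded cycle; since a cut disc carries exactly one puncture and a trivial ball carries at most one arc, these paths and cycles are pairwise edge-disjoint with pairwise disjoint interiors, whence $b_1(D)\ge b_1(\Gamma)+\#\{\text{core loops}\}$. Feeding this strengthened inequality into $(\ast)$ gives $1-b\ge c(\Gamma)+\#\{\text{core loops}\}$, which eliminates core loops in all cases at once. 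When equality holds, $D$ is obtained from the image of $\Gamma$ by attaching trees, so $\punct{C}$ collapses to $\boundary_-\punct{C}$ together with the ghost arcs; that is the spine statement. Finally, the absence of compressing and semicompressing discs needs no equality analysis at all: cutting a vp-compressionbody along such a disc again yields vp-compressionbodies (by \cite{TT1}) whose positive boundaries have total extent $\extent(\boundary_+C)-1$, so their indices sum to $\delta(C,T_C)-1=-1$, contradicting the non-negativity of the index from \cite[Lemma 4.2]{TT2}. With these supplements your argument becomes a complete and self-contained proof.
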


In \cite{TT2}, we studied net extent and the related invariant ``width'' from the perspective of thin position. Our results concerning net extent can be summarized as follows:
\begin{theorem}[{Theorem 4.9 and Theorem 5.7 from \cite{TT2}}]\label{Add Thm} The following hold for net extent.

\begin{itemize}\spacing
\item (Unknot Detection)  Suppose that $(M,T)$ is a connected, irreducible pair with $T$ a knot or link and that every sphere in $M$ separates. Assume $\netextent_x(M,T) = 0$ for some realizable $x$. If there is no essential twice or thrice-punctured sphere in $(M,T)$, then either $M$ is $S^3$ or a lens space and $T$ is the unknot, a core loop, or a Hopf link, or $M$ is a solid torus and $T$ is a core loop.

\item (Additivity) Suppose $(M,T)$ is connected, irreducible, composite and that every sphere in $M$ separates. If $x$ is realizable for $(M,T)$, then there is a prime factorization of $(M,T)$ into pairs $(\wihat{M}_i, \wihat{T}_i)$ for $i = 1, \hdots, n$ such that for each $i$ there exists a realizable $x_i$ so that:
\begin{enumerate}
\item $x_1 + \cdots + x_n \leq x - 2(n-1)$, and
\item $\netextent_x(M,T) = \frac{-p_3}{2} + \sum\limits_{i=1}^n \netextent_{x_i}(\wihat{M}_i, \wihat{T}_i)$.
\end{enumerate}
where $p_3$ is the number of thrice-punctured summing spheres in the prime decomposition.
\end{itemize}
\end{theorem}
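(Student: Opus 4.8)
The plan is to prove both statements by reducing to a \emph{locally thin} oriented multiple vp-bridge surface and then combining the index computation of Lemma~\ref{The delta lemma} with the surgery bookkeeping of Lemma~\ref{crushing calc}. In each case I would fix a realizable $x$ and choose $\mc{H} \in \H(M,T)$ with $\netchi(\mc{H}) \le x$ and $\netextent(\mc{H}) = \netextent_x(M,T)$; by the thinning results recalled above I may take $\mc{H}$ locally thin, since thinning increases neither $\netchi$ nor $\netextent$ and hence preserves the status of being a minimizer.

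For \emph{Unknot Detection} the crucial observation is that, when $T$ is a link in a closed manifold, the indices of all the vp-compressionbodies of $(M,T)\setminus\mc{H}$ sum to twice the net extent. Since $T$ has no internal vertices, each thick and each thin surface is the positive (respectively negative) boundary of the two compressionbodies flanking it, so it is counted twice, and one computes $\sum_C \delta(C,T_C) = 2\,\netextent(\mc{H}) = 0$. Because every index is non-negative, each piece has index $0$, so Lemma~\ref{The delta lemma} classifies every component of $(M,T)\setminus\mc{H}$. The hypothesis that there is no essential twice- or thrice-punctured sphere, together with (LT3) and (LT4), rules out the ghost-arc case (VP4): any thin surface it would produce is a c-essential sphere or torus that the hypotheses forbid. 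Hence $\mc{H}^- = \nil$, the surface $\mc{H} = H$ is a single vp-bridge surface flanked by two pieces each of type $(B^3,\text{arc})$, $(S^1\times D^2,\nil)$, or $(S^1\times D^2,\text{core loop})$, and reassembling across $H$ yields exactly the enumerated outcomes. The product and solid-torus cases are the places to be careful, and there I would lean on (LT2) to control $\boundary$-parallel behaviour.

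For \emph{Additivity} the engine is Lemma~\ref{crushing calc}. Since $(M,T)$ is composite it contains an essential separating twice- or thrice-punctured sphere, so by (LT4) there is such a sphere $F \cpt \mc{H}^-$. I would surger along $F$ to obtain $(\wihat{M}_1,\wihat{T}_1)$ and $(\wihat{M}_2,\wihat{T}_2)$ with induced surfaces $\mc{H}_1,\mc{H}_2$. Lemma~\ref{crushing calc} then supplies the local-thinness of $\mc{H}_1,\mc{H}_2$, the identity $\netchi(\mc{H}) = \netchi(\mc{H}_1)+\netchi(\mc{H}_2)+2$, and $\netextent(\mc{H}) = \netextent(\mc{H}_1)+\netextent(\mc{H}_2)-(p-2)/2$. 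Writing $x_i = \netchi(\mc{H}_i)$, part~(4) of that lemma lifts the extent identity to the invariants, and comparison with $\netextent(\mc{H}_i) \ge \netextent_{x_i}(\wihat{M}_i,\wihat{T}_i)$ forces each $\mc{H}_i$ to realize the minimum for its factor, so I may recurse on each piece. The recursion halts once no essential twice- or thrice-punctured sphere survives, i.e.\ when every factor is prime or trivial; summing the per-surgery defects $-(p-2)/2$ contributes $0$ per twice-punctured sphere and $-1/2$ per thrice-punctured sphere, accumulating to $-p_3/2$ and giving conclusion~(2), while summing the $+2$ jumps in net Euler characteristic over the $n-1$ surgeries gives $\sum_i x_i = \netchi(\mc{H}) - 2(n-1) \le x - 2(n-1)$, which is conclusion~(1). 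That these factors form a genuine prime factorization is because surgering along essential twice- and thrice-punctured spheres exactly inverts connected sum and trivalent vertex sum.

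I expect the main obstacle to be the ``hard'' inequality buried in Lemma~\ref{crushing calc}(4): that one cannot beat the decomposition, so that $\netextent_x(M,T)$ is bounded \emph{below} by the sum of the factors' minima adjusted by $-p_3/2$. The reverse, upper, bound is the easy direction of simply reassembling bridge surfaces of the factors; the lower bound requires that an arbitrary competing multiple vp-bridge surface, after thinning, can be cut compatibly along the summing spheres without loss, which is exactly where the full thin-position machinery and the argument of \cite[Theorem~5.5]{TT2} are needed. Secondary technical points are verifying termination of the recursion (via a complexity such as the total extent of $\mc{H}^-$) and checking that the intermediate surfaces keep satisfying the hypotheses of Lemma~\ref{crushing calc} while the realizable budgets $x_i$ stay consistent with conclusion~(1) at every stage.
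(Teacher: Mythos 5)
First, a structural point: the paper never proves Theorem \ref{Add Thm} at all --- it is imported verbatim from \cite{TT2} (Theorems 4.9 and 5.7) --- so your proposal has to be judged on whether it closes the argument using the tools this paper does develop, and it does not quite. In the Unknot Detection half, the step that rules out type (VP4) compressionbodies is wrong as stated. You claim that any thin surface such a piece produces ``is a c-essential sphere or torus that the hypotheses forbid,'' but the hypotheses forbid only essential twice- and thrice-punctured spheres: an essential \emph{unpunctured torus} is entirely consistent with them, and since an unpunctured torus has extent $0$, the index computation $\sum_C \delta(C,T_C)=2\netextent(\mc{H})=0$ (which is correct, via Lemma \ref{Main Ineq2}) does not exclude toroidal thin surfaces either. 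Excluding them requires a topological argument, not the sphere hypothesis. For instance: a leaf of the dual digraph exists by acyclicity, and its thick surface has extent $0$, hence is a twice-punctured sphere or an unpunctured torus. In the sphere case, Corollary \ref{ghostarcgraph cor}(1) forces every negative boundary component of the opposite piece to be a sphere of extent $0$, i.e.\ a thin twice-punctured sphere, which by (LT3) would be an essential twice-punctured sphere --- this is where the hypothesis enters. In the torus case, the ghost-arc analysis from the proof of Theorem \ref{thin props}(1) together with (LT2) forces the opposite piece to be a solid torus, or a product attached to $\boundary M$ (which is exactly where the solid-torus conclusion comes from). Either way the opposite piece is itself a leaf, so $\mc{H}$ is connected and the enumeration you give then goes through. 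Your sketch gestures at (LT2), but the exclusion of essential tori --- the actual crux --- is missing.

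The Additivity half has the right architecture (it is essentially the \cite{TT2} argument: surger along an essential thin sphere supplied by (LT4), apply Lemma \ref{crushing calc}, force each restricted surface to be a minimizer, recurse, and keep the $-1/2$-per-thrice-punctured-sphere and $\sum x_i \leq x-2(n-1)$ bookkeeping, all of which you do correctly, including surgering only along \emph{essential} spheres so that no unknot factors arise). But as a self-contained proof it has a hole exactly where you predict, and one should be blunt about its nature: everything hinges on Lemma \ref{crushing calc}(4), and this paper explicitly declines to prove the hard inequality of that conclusion, saying it ``can be proved as in \cite[Theorem 5.5]{TT2}'' and is not used in the paper. That inequality is equivalent to the reassembly claim you defer --- that minimizers for the factors can be glued along the summing spheres into a valid \emph{oriented} multiple vp-bridge surface for $(M,T)$ without loss, where the orientation/acyclicity bookkeeping is the real content. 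Since the theorem you are asked to prove \emph{is} the TT2 result, deducing it from a lemma whose proof is itself deferred to TT2 is circular rather than a proof; what you have is a correct reduction of Theorem \ref{Add Thm} to Lemma \ref{crushing calc}(4), together with a correct account of where the remaining difficulty lives.
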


We also gave the following lower bound for net extent. Note that the right hand side is an integer or half integer.

\begin{corollary}[{ \cite[Cor. 4.8]{TT2}}]\label{Main Ineq}
For a pair $(M,T)$ such that every sphere in $M$ separates, we have
\begin{equation}
\netextent_\infty(M,T) \geq \frac{-\chi(\boundary M)}{4} + \frac{|\boundary M \cap T| - \chi(T)}{2}.
\end{equation}
\end{corollary}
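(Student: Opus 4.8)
The plan is to bound $\netextent(\mc{H})$ from below for \emph{every} $\mc{H} \in \H(M,T)$, and then let this pass to $\netextent_\infty$. Indeed, $\netextent_x(M,T)$ is a minimum of $\netextent(\mc{H})$ over surfaces with $\netchi(\mc{H}) \leq x$, and $\netextent_\infty(M,T)$ is the eventual value of the non-increasing sequence $(\netextent_x(M,T))$, hence equals $\inf_{\mc{H}} \netextent(\mc{H})$; so any lower bound valid for all $\mc{H}$ is a lower bound for $\netextent_\infty$. The engine is the non-negativity of index: by \cite[Lemma 4.2]{TT2}, every vp-compressionbody $(C,T_C) \cpt (M,T)\setminus \mc{H}$ satisfies
\[
\delta(C,T_C) = \extent(\boundary_+ C) - \extent(\boundary_- \punct{C}) \geq 0.
\]
I would sum this single inequality over all the vp-compressionbodies cut out by $\mc{H}$ and translate the resulting global inequality into the stated boundary data.

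The bookkeeping is the heart of the argument. Each component of $\mc{H}^+$ is an interior thick surface, so it is the positive boundary of exactly the two vp-compressionbodies flanking it; hence $\sum_C \extent(\boundary_+ C) = 2\,\extent(\mc{H}^+)$. On the negative side, each component of $\mc{H}^-$ is a negative boundary of the two compressionbodies on its two sides, each component of $\boundary M$ lies in exactly one compressionbody, and each internal (degree $\geq 3$) vertex $v$ of $T$ lies interior to exactly one compressionbody, where puncturing contributes a negative boundary sphere of extent $\tfrac{1}{2}(\deg v - 2)$. Thus
\[
\sum_C \extent(\boundary_- \punct{C}) = 2\,\extent(\mc{H}^-) + \extent(\boundary M) + \sum_{v} \tfrac{\deg v - 2}{2},
\]
the last sum running over internal vertices. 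Summing $\delta(C,T_C)\geq 0$ and rearranging gives
\[
\netextent(\mc{H}) = \extent(\mc{H}^+) - \extent(\mc{H}^-) \geq \tfrac{1}{2}\extent(\boundary M) + \tfrac{1}{2}\sum_{v} \tfrac{\deg v - 2}{2}.
\]

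Finally I would convert the vertex sum into $\chi(T)$. Writing $L$ for the number of leaves of $T$ and using that $T$ has no degree-$2$ vertices, the handshake lemma together with $\chi(T) = (\#\text{vertices}) - (\#\text{edges})$ yields $\sum_{v}(\deg v - 2) = L - 2\chi(T)$, summed over internal vertices. Since the leaves of $T$ are exactly its intersections with $\boundary M$, we have $L = |\boundary M \cap T|$. Substituting this, along with $\extent(\boundary M) = \tfrac{1}{2}(-\chi(\boundary M) + |\boundary M \cap T|)$, collapses the right-hand side to exactly $\tfrac{-\chi(\boundary M)}{4} + \tfrac{|\boundary M \cap T| - \chi(T)}{2}$.

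The hard part will not be any single step but the care demanded by the adjacency count: correctly recognizing that thick and thin surfaces are each counted with multiplicity two, while boundary components and vertex-spheres are counted once, and confirming that the leaf/boundary-puncture identity $L = |\boundary M \cap T|$ holds in the stated generality (so that the $|\boundary M \cap T|$ term acquires coefficient $\tfrac12$ rather than $\tfrac14$). The hypothesis that every sphere in $M$ separates enters only through the framework of \cite{TT2} that guarantees the vp-compressionbody decomposition and the bound $\delta \geq 0$ behave as described; the summation itself is insensitive to orientations or to the acyclicity of the dual digraph.
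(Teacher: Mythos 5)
Your proof is correct and is essentially the paper's own argument: the identity you derive, $\sum_{(C,T_C)} \delta(C,T_C) = 2\netextent(\mc{H}) - \extent(\boundary \punct{M})$ (with thick and thin surfaces counted with multiplicity two and boundary/vertex spheres once), is exactly the observation the paper singles out in the proof of Lemma \ref{Main Ineq2} as ``central to our proof of Corollary \ref{Main Ineq},'' and your conversion of $\sum_v(\deg v - 2)$ into $|\boundary M \cap T| - 2\chi(T)$ is the step the paper summarizes as ``inserting $\pm|\boundary M \cap T|/2$ and computing the Euler characteristic.'' The bookkeeping and the passage from a bound on all $\mc{H}$ to $\netextent_\infty$ both check out.
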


We will need to understand when equality holds, or is close to holding, in the case when $T$ has genus 2. To that end, for $\mc{H} \in \H(M,T)$, define
\[
\Delta(\mc{H}) = 2\netextent(\mc{H}) - \big(\extent(\boundary M) + \frac{|\boundary M \cap T|}{2}  - \chi(T)\big).
\]
Recall that $\Delta(\mc{H})$ is non-negative and integral.
\begin{lemma}\label{Main Ineq2}
Suppose that $(M,T)$ is an irreducible pair. Let $\mc{H} \in \H(M,T)$. Then
\[
\Delta(\mc{H}) = \sum\limits_{(C, T_C)} \delta(C, T_C),
\]
where the sum is over all components $(C, T_C) \cpt (M,T)\setminus \mc{H}$.
\end{lemma}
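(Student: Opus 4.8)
The plan is to unwind both sides of the claimed equation into sums over the pieces of the decomposition $(M,T)\setminus\mc{H}$ and check that they agree term by term. Writing $\delta(C,T_C)=\extent(\boundary_+ C)-\extent(\boundary_-\punct{C})$ and summing over all $(C,T_C)\cpt(M,T)\setminus\mc{H}$, I would first note that $\extent$ is additive over disjoint unions of surfaces (both $-\chi$ and the puncture count are), so the whole computation reduces to bookkeeping: counting, for each component of $\mc{H}^+$, of $\mc{H}^-$, of $\boundary M$, and for each internal vertex of $T$, how many times and with what sign it is recorded in $\sum_{(C,T_C)}\delta(C,T_C)$.

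First I would handle the positive boundaries. Since each vp-compressionbody has a single positive boundary component and a thick surface is the $\boundary_+$ of the compressionbody on each of its two sides, every component of $\mc{H}^+$ occurs exactly twice among the $\boundary_+ C$; hence $\sum_C \extent(\boundary_+ C)=2\extent(\mc{H}^+)$. For the negative boundaries I would separate $\boundary_-\punct{C}$ into $\boundary_- C$ together with the small spheres created by puncturing the internal vertices of $T_C$. Each thin surface lies between two compressionbodies and so is recorded twice, each component of $\boundary M$ bounds a single compressionbody and so is recorded once, and each internal vertex $v$ of $T$ lies in the interior of exactly one compressionbody and contributes one sphere $S_v$ with $\extent(S_v)=(d_v-2)/2$, where $d_v=\deg v$. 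Thus $\sum_C\extent(\boundary_-\punct{C})=2\extent(\mc{H}^-)+\extent(\boundary M)+\sum_v (d_v-2)/2$, and subtracting gives $\sum_C\delta(C,T_C)=2\netextent(\mc{H})-\extent(\boundary M)-\sum_v (d_v-2)/2$.

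Comparing this with the definition $\Delta(\mc{H})=2\netextent(\mc{H})-\extent(\boundary M)-\tfrac{1}{2}|\boundary M\cap T|+\chi(T)$, the lemma reduces to the purely graph-theoretic identity $\sum_v (d_v-2)/2 = \tfrac{1}{2}|\boundary M\cap T|-\chi(T)$, the sum being over internal vertices. I would prove this with the handshake lemma: because $T$ is properly embedded, every leaf (degree-$1$ vertex) lies on $\boundary M$ and, conversely, $|\boundary M\cap T|$ counts exactly these leaves, so summing degrees over all vertices gives $\sum_v d_v + |\boundary M\cap T| = 2E_{\mathrm{arc}}$, where $E_{\mathrm{arc}}$ is the number of non-loop edges. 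Since closed-loop components contribute $0$ to $\chi$, a CW count yields $\chi(T)=n_{\mathrm{int}}+\tfrac{1}{2}|\boundary M\cap T|-\tfrac{1}{2}\sum_v d_v$, with $n_{\mathrm{int}}$ the number of internal vertices, and rearranging gives exactly the identity; substituting back yields $\sum_C\delta(C,T_C)=\Delta(\mc{H})$.

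The routine parts are genuine, but the step most worth care is the combinatorial identity: one must correctly account for leaves, closed loops, and the proper-embedding convention that places every leaf on $\boundary M$, since it is this convention that makes $|\boundary M\cap T|$ equal to the number of leaves and thereby closes the gap. I expect no use of the irreducibility hypothesis here — the computation is a purely additive Euler-characteristic-and-puncture count — so the only real subtleties are confirming the multiplicities (two for each thick and thin surface, one for each boundary component and each vertex sphere) and the graph-theoretic identity.
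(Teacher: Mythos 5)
Your proof is correct and takes essentially the same route as the paper's: the paper likewise expands $\sum_{(C,T_C)}\delta(C,T_C)$ using additivity of extent, with each component of $\mc{H}^+$ and of $\mc{H}^-$ counted twice and $\boundary \punct{M}$ (that is, $\boundary M$ together with the spheres at the internal vertices of $T$) counted once, and then closes with exactly the degree/Euler-characteristic identity $\sum_v (\deg(v)-2)/2 = |\boundary M \cap T|/2 - \chi(T)$ that you verify. One caution on wording only: in your handshake step, ``non-loop edges'' must mean edges other than closed circle components --- loop edges based at a vertex (present in handcuff curves and 2-bouquets, which are central to this paper) still contribute $2$ to the degree sum and must be counted --- but your stated formula for $\chi(T)$ is correct under that reading, so the argument closes as claimed.
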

\begin{proof}
Central to our proof of Corollary \ref{Main Ineq} was the observation that, for each $\mc{H} \in \H(M,T)$:
\begin{equation}
2\netextent(\mc{H}) - \extent(\boundary \punct{M}) = \sum\limits_{(C,T_C)} \delta(C,T_C).
\end{equation}
The sum is over all vp-compressionbodies $(C, T_C) \cpt (M,T)\setminus \mc{H}$. This follows from the realization that extent is additive under disjoint union and that the union of the positive boundaries of the components of $(\punct{M},\punct{T})\setminus H$ is equal to $\mc{H}^+$ and the union of the negative boundaries is the union of $\mc{H}^-$ with $\boundary \punct{M}$. We recall that $\boundary \punct{M}$ consists of both $\boundary M$ and spheres corresponding to the internal vertices of $T$. 

Thus,
\[
\sum\limits_{(C,T_C)} \delta(C,T_C) = 2\netextent(\mc{H}) - \extent(\boundary M) - \sum\limits_{v} \frac{\deg(v) - 2}{2}
\]
where the second sum is over all the \emph{internal} vertices of $T$.  Inserting $\pm |\boundary M \cap T|/2$ and computing the Euler characteristic produces what we want.
\end{proof}

\section{Analysis of vp-compressionbodies}\label{sec:vp-compressionbodies}

Throughout this section, assume that $(E, T_E)$ is a vp-compressionbody such that no component of $\boundary_- E$ is a sphere with two or fewer punctures. We consider several possibilities for the situation when $\boundary_+ E$ has small genus and few punctures.

\subsection{When $\boundary_+ E$ is a sphere with four or fewer punctures}

\begin{lemma}\label{sphere4}
Suppose that $\boundary_+ E$ is a sphere with four or fewer punctures. Then $(\punct{E}, \punct{T}_E)$ is one of the vp-compressionbodies pictured in Figure \ref{fig:FourPunctSphereCompBdy}. That is:
\begin{enumerate}
\item if $|H \cap T| = 2$, then $(E, T_E) = (\punct{E}, \punct{T}_E)$ is a trivial ball compressionbody;
\item if $|H \cap T| = 3$, then $(\punct{E}, \punct{T}_E)$ is a trivial product compressionbody.
\item if $|H \cap T| = 4$, then either $T_E$ is the union of two bridge arcs, $(\punct{E}, \punct{T}_E)$ is a trivial product compressionbody, or $\boundary_- \punct{E}$ is the union of two spheres and $\punct{T}_E$  is the union of a ghost arc and four vertical arcs.
\end{enumerate}
\end{lemma}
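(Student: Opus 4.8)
The plan is to extract everything from the index inequality $\delta(E,T_E)\ge 0$ together with the integrality of $\delta$, combined with the structural classification of index-zero vp-compressionbodies in Lemma \ref{The delta lemma}. Write $H=\boundary_+ E$ and $p=|H\cap T|\le 4$, so that $\extent(H)=(p-2)/2$. Since $H$ is a sphere, Corollary \ref{ghostarcgraph cor}(1) tells me that every component of $\boundary_-\punct{E}$ is a sphere and that the ghost arc graph $\Gamma$ is acyclic. By the standing hypothesis of this section, no component of $\boundary_- E$ is a sphere with two or fewer punctures, and every internal vertex of $T_E$ has degree at least $3$ (degree-$2$ vertices are disallowed), so each component of $\boundary_-\punct{E}$ is a sphere with at least three punctures and hence has extent at least $1/2$. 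Consequently $\extent(\boundary_-\punct{E})$ is at least half the number of its components, and the bound $\extent(H)\ge \extent(\boundary_-\punct{E})$ coming from $\delta(E,T_E)\ge 0$ constrains both the number and the puncture-count of the components of $\boundary_-\punct{E}$. The integrality of $\delta$ is the lever that then pins down each case exactly.

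For $p=2$ we have $\extent(H)=0$, which forces $\extent(\boundary_-\punct{E})=0$, hence $\boundary_-\punct{E}=\nil$ and $\delta=0$; Lemma \ref{The delta lemma} then leaves only option (VP1), a trivial ball compressionbody, and the absence of internal vertices gives $(E,T_E)=(\punct{E},\punct{T}_E)$, which is case (1). For $p=3$ we have $\extent(H)=1/2$, so $\extent(\boundary_-\punct{E})\le 1/2$; here integrality of $\delta$ rules out $\boundary_-\punct{E}=\nil$ (which would give $\delta=1/2$) and forces $\boundary_-\punct{E}$ to be a single thrice-punctured sphere with $\delta=0$. Since $\boundary_-\punct{E}\neq\nil$ and $H$ is a sphere rather than a torus, Lemma \ref{The delta lemma} places us in (VP4). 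As $\Gamma$ is connected with a single vertex and acyclic, it has no edges, so there are no ghost arcs; the underlying compressionbody is therefore $S^2\times I$ and $\punct{T}_E$ consists of three vertical arcs. Thus $(\punct{E},\punct{T}_E)$ is a trivial product compressionbody, which is case (2).

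For $p=4$ we have $\extent(H)=1$, so by integrality of $\delta$ the quantity $\extent(\boundary_-\punct{E})$ is an integer in $\{0,1\}$. If $\extent(\boundary_-\punct{E})=0$ then $\boundary_-\punct{E}=\nil$ and $\delta=1$; then $E=B^3$, and cutting along a complete collection of sc-discs into trivial ball compressionbodies exhibits $T_E$ as two bridge arcs. If $\extent(\boundary_-\punct{E})=1$ then $\delta=0$ and, as before, Lemma \ref{The delta lemma} puts us in (VP4); the only configurations of extent $1$ are $\boundary_-\punct{E}$ a single four-punctured sphere or two thrice-punctured spheres. In the first case $\Gamma$ is a single acyclic vertex, so there are no ghost arcs, $\punct{E}=S^2\times I$, and $\punct{T}_E$ is four vertical arcs, a trivial product compressionbody. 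In the second case $\Gamma$ is connected and acyclic on two vertices, hence a single edge, giving exactly one ghost arc joining the two spheres, and the remaining four punctures are the endpoints of four vertical arcs. These three outcomes are precisely the possibilities in the statement and in Figure \ref{fig:FourPunctSphereCompBdy}.

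The part requiring the most care is the bookkeeping that converts the extent bound into an exact combinatorial description. I must consistently exploit that $\delta$ is an integer (to exclude half-integer values of $\extent(\boundary_-\punct{E})$), that components of $\boundary_-\punct{E}$ arising from internal vertices are themselves spheres with at least three punctures, and that acyclicity of $\Gamma$ together with the connectivity supplied by (VP4) forces the exact number of ghost arcs. I would also verify that (VP2) and (VP3) are genuinely excluded throughout, since their positive boundary is a torus rather than a sphere, and that no core loops can occur, as every index-zero piece is covered by (VP1)--(VP4) and the $B^3$ pieces contain only unknotted arcs.
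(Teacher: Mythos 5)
Your proof is correct, but it runs along a genuinely different track than the paper's. The paper's own argument never invokes the index $\delta$ or Lemma \ref{The delta lemma}: after citing Corollary \ref{ghostarcgraph cor} (all components of $\boundary_- E$ are spheres, the ghost arc graph $\Gamma$ is acyclic), it simply counts vertical arcs --- each isolated vertex of $\Gamma$ must absorb at least $3$ vertical arcs and each leaf at least $2$, and since $\boundary_+ E$ supplies at most $4$ endpoints, the possible configurations of $\Gamma$ and the vertical arcs are enumerated directly. You instead convert the hypothesis ``every component of $\boundary_-\punct{E}$ is a sphere with at least three punctures'' into the bound $\extent(\boundary_-\punct{E}) \geq \tfrac{1}{2}\cdot(\text{number of components})$, and then use non-negativity \emph{and integrality} of $\delta$ to enumerate $\boundary_-\punct{E}$, finishing each $\delta = 0$ case by feeding it through the (VP1)--(VP4) classification. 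What your route buys is that the structural conclusions (the $S^2 \times I$ product structure, absence of ghost arcs or exactly one ghost arc) fall out of the (VP4) spine and connectivity conditions rather than being read off a picture, and the same bookkeeping is exactly what the paper itself deploys in harder settings such as Theorem \ref{low ne class}; what the paper's route buys is brevity and elementarity, since it needs no index machinery at all. One point to watch: your appeal to Lemma \ref{The delta lemma} cannot exclude core loops in the $p=4$, $\delta = 1$ subcase (two bridge arcs), since that piece has index $1$; the clean argument there is that any sc-disc has boundary on the sphere $\boundary_+ E$ and hence separates the punctured ball $E$, so it meets a closed loop of $T_E$ an even number of times, hence (being an sc-disc) zero times, and the loop would then survive the cutting intact inside a trivial ball or product piece, which is impossible. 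Your remark that ``the $B^3$ pieces contain only unknotted arcs'' is the right punchline but leaves this separation step implicit --- though to be fair, the paper's proof is equally silent on core loops.
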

\begin{proof}
Assume that $\boundary_+ E$ is a sphere and that $|\boundary_+ E \cap T| \leq 4$. Since $\boundary_+ E$ is a sphere, by Corollary \ref{ghostarcgraph cor}, $\boundary_- E$ is the union of spheres and the corresponding ghost arc graph $\Gamma$ is acyclic. Since no component of $\boundary_- E$ is a sphere with two or fewer punctures, each isolated vertex of $\Gamma$  is incident to at least 3 vertical arcs of $\punct{T}_E$ and each leaf of $\Gamma$ is incident to at least two vertical arcs of $\punct{T}_E$. Thus, $(\punct{E}, \punct{T}_E)$ is one of the vp-compressionbodies pictured in Figure \ref{fig:FourPunctSphereCompBdy}.
\end{proof}

\begin{figure}[ht!]
\labellist
\small\hair 2pt
\pinlabel {(1)} [t] at 70 35
\pinlabel {(2)} [t] at 222 35
\pinlabel {(3)} [t] at 370 35
\pinlabel {(3)} [t] at 518 35
\pinlabel {(3)} [t] at 669 35
\endlabellist
\centering
\includegraphics[scale=0.4]{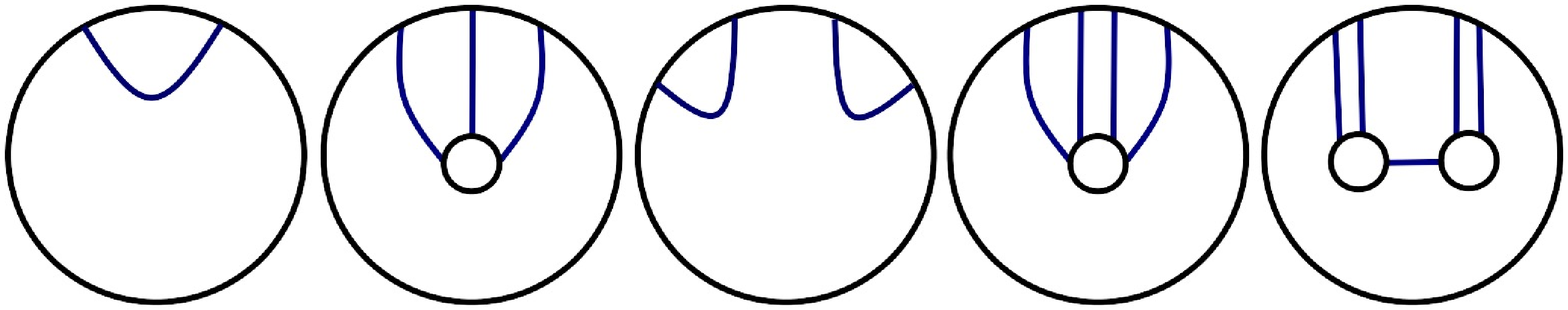}
\caption{The vp-compressionbodies $(E, T_E)$ with $\boundary_+ E$ a sphere with 4 or fewer punctures and no component of $\boundary_- E$ a sphere with two or fewer punctures.}
\label{fig:FourPunctSphereCompBdy}
\end{figure}

\subsection{When $\boundary_+ E$ is a torus with 2 or fewer punctures}

\begin{lemma}\label{toruscompbody}
Suppose that $\boundary_+ E$ is a torus such that $|\boundary_+ E \cap T| \leq 2$. Then $(\punct{E}, \punct{T}_E)$ is one of the vp-compressionbodies pictured in Figure \ref{fig:TwoPunctTorusCompBdy}. That is, one of the following holds:
\begin{enumerate}
\item $(E, T_E)$ is a trivial product compressionbody with 0, 1, or 2 vertical arcs.
\item $E = T^2 \times I$ and $T_E$ is a bridge arc.
\item $\punct{E}$ is obtained by removing an open 3-ball from $T^2 \times I$ and $\punct{T}_E$ is the union of a ghost arc joining the torus component of $\boundary_- \punct{E}$ to the spherical component of $\boundary_- \punct{E}$ with two vertical arcs, each joining the spherical component of $\boundary_- \punct{E}$ to $\boundary_+ E$.
\item $E$ is a solid torus and $T_E$ is a bridge arc
\item $(E, T_E)$ is (solid torus, core loop) or (solid torus, $\nil$)
\item $\punct{E}$ is obtained by removing an open 3-ball from a solid torus and $\punct{T}_E$ is the union of a ghost arc and 1 or 2 vertical arcs.
\item $\punct{E}$ is obtained by removing two open 3-balls from a solid torus and $\punct{T}_E$ is the union of two ghost arcs, each joining the two components of $\boundary_- \punct{E}$, with two vertical arcs. Each component of $\boundary_- \punct{E}$ is incident to one vertical arc.
\item $\punct{E}$ is obtained by removing two open 3-balls from a solid torus and $\punct{T}_E$ is the union of two ghost arcs and two vertical arcs. One of the ghost arcs joins the two spherical components of $\boundary_- \punct{E}$. The other has both endpoints at the same component $P \cpt \boundary_- \punct{E}$. The two vertical arcs each have an endpoint on $\boundary_- \punct{E} \setminus P$.\end{enumerate}
\end{lemma}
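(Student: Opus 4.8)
The plan is to run the same ghost-arc-graph bookkeeping used in the proof of Lemma~\ref{sphere4}, now powered by the torus case of Corollary~\ref{ghostarcgraph cor} and by the index inequality $\delta(E,T_E)\geq 0$. First I would record the structural input: since $\boundary_+ E$ is a torus, Corollary~\ref{ghostarcgraph cor}(2) tells us that $\boundary_-\punct{E}$ is a union of spheres together with at most one torus, that the ghost arc graph $\Gamma$ has at most one cycle, and that $\Gamma$ is acyclic whenever a torus is present. I would then set up notation: write $p=|\boundary_+ E\cap T|\in\{0,1,2\}$ and let $V,B,G$ be the numbers of vertical, bridge, and ghost arcs and $s,t$ the numbers of spherical and toroidal components of $\boundary_-\punct{E}$ (so $t\le 1$). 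The identities $p=V+2B$ and (number of negative punctures) $=2G+V$ are immediate from the arc types. The standing hypothesis that no component of $\boundary_- E$ is a sphere with two or fewer punctures, together with the fact that every internal vertex of $T$ has degree at least $3$, forces each spherical component of $\boundary_-\punct{E}$ to carry at least three punctures; hence $2G+V\geq 3s$.

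Second, I would extract a complementary inequality from $\delta(E,T_E)\geq 0$. Using $\extent(\boundary_+ E)=p/2$ and $\extent(\boundary_-\punct{E})=-s+G+V/2$ and substituting $p=V+2B$, the index simplifies to $\delta(E,T_E)=B+s-G$, so that $G\leq B+s$. Because $p\le 2$ there are only four possibilities for $(V,B)$, namely $(0,0),(1,0),(2,0),(0,1)$. In each, combining $2G+V\ge 3s$ with $G\le B+s$ bounds $s$ (one gets $s\le 0,1,2,2$ respectively) and then pins down $G$ exactly; this reduces the classification to a short finite list of candidate tuples $(V,B,G,s,t)$.

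Finally, for each surviving tuple I would determine the ghost arc graph and the topology of $\punct{E}$. The cycle constraint from Corollary~\ref{ghostarcgraph cor}(2) does the decisive work: whenever $t=1$ the graph $\Gamma$ must be acyclic, which eliminates loops and multi-edges, while when $t=0$ at most one cycle is allowed. Distributing the $2G+V$ negative punctures so that every sphere receives at least three, subject to these cycle restrictions, either forces a unique admissible configuration or rules the tuple out entirely; the presence or absence of the torus then identifies $\punct{E}$ as a (once- or twice-punctured) $T^2\times I$ or solid torus, yielding exactly the eight listed conclusions. Core loops are invisible to the puncture bookkeeping, so I would dispatch them separately: a core loop generates $H_1$ of a solid torus and cannot coexist with a ghost-arc cycle or a product region, so it occurs only in the arc-free solid torus, giving case~(5).

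I expect the main obstacle to be the two-sphere cases arising from $(V,B)=(2,0)$ with $s=2$ (and the corresponding eliminations under $(0,1)$): here $\Gamma$ has two vertices and two edges, and one must use the ``at most one cycle, acyclic if a torus is present'' dichotomy to show that the only realizable graphs are the double edge with one vertical arc on each sphere (case~(7)) and the single edge plus a loop with both vertical arcs off the looped sphere (case~(8)), while every configuration involving a torus is impossible. Verifying that these two ghost arc graphs genuinely give a twice-punctured solid torus, rather than some other manifold, and that no further distribution of punctures is admissible, is the delicate step.
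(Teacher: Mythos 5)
Your outline follows the paper's proof in its essentials: both arguments run on the ghost arc graph $\Gamma$, invoke Corollary \ref{ghostarcgraph cor}(2) for the ``spheres plus at most one torus, at most one cycle, acyclic if a torus appears'' constraints, and exploit the standing hypothesis that each spherical component of $\boundary_-\punct{E}$ carries at least three punctures. The one genuinely different ingredient is your complementary bound: you get $G\leq B+s$ from $\delta(E,T_E)=B+s-G\geq 0$ (I checked this computation; it is correct, and the resulting enumeration of tuples $(V,B,G,s,t)$ does reproduce conclusions (1)--(8)), whereas the paper gets the same control purely graph-theoretically: no spherical vertex of $\Gamma$ may be isolated, at most one may be a leaf, so in the all-spheres case $\Gamma$ is connected with a unique cycle and at most one off-cycle vertex. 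Your route buys a clean finite case list; the paper's avoids the index machinery altogether. Either works.

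The genuine gap is exactly the step you relegated to a closing remark: the dispatch of core loops. Your argument --- a core loop ``cannot coexist with a ghost-arc cycle or a product region, so it occurs only in the arc-free solid torus'' --- is fine for those two configurations (a core loop forces a solid torus summand, which is incompatible with the genus already consumed by a $\boundary_-$ torus or a ghost-arc cycle), but it never addresses coexistence with \emph{bridge arcs}, which are invisible to both $H_1$ and $\Gamma$. That case cannot be excluded: $(E,T_E)=(S^1\times D^2,\ \text{core loop}\cup\text{bridge arc})$ is a bona fide vp-compressionbody meeting all the hypotheses. Indeed $\boundary_+E$ is a twice-punctured torus, $\boundary_- E=\nil$ (so the section's standing hypothesis holds vacuously), and for $\Delta$ one may take an sc-disc cutting off a ball containing the bridge arc together with a meridian cut disc meeting the core loop once; cutting along $\Delta$ yields two trivial ball compressionbodies. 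This pair is on neither your list nor the lemma's: conclusion (4) requires $T_E$ to be exactly a bridge arc, and conclusion (5) forbids the bridge arc. So the claim that core loops occur only in the arc-free solid torus is not merely unproven but false, and no refinement of the cycle or homology bookkeeping will rescue it.

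You are in good company: the paper's own proof has the identical blind spot, asserting without justification that if $\boundary_-\punct{E}=\nil$ then ``$T_E$ is empty, a core loop or bridge arc.'' The honest repair is to enlarge the statement, allowing a core loop alongside the bridge arc in conclusion (4) (this is the only place it can hide, by your own genus argument). That addition is harmless in the paper's later applications of the lemma, since there $T$ is a connected knot or genus 2 graph and an extra core loop would be a separate closed component of $T$. But as long as the statement is kept exactly as written, your proof --- like the paper's --- cannot be completed at this step.
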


\begin{figure}[ht!]
\labellist
\small\hair 2pt
\pinlabel {(1)} [t] at 124 344
\pinlabel {(1)} [t] at 258 344
\pinlabel {(1)} [t] at 394 344
\pinlabel {(2)} [t] at 124 232
\pinlabel {(3)} [t] at 251 232
\pinlabel {(4)} [t] at 396 232
\pinlabel {(5)} [t] at 58 117
\pinlabel {(5)} [t] at 189 117
\pinlabel {(6)} [t] at 329 117
\pinlabel {(6)} [t] at 461 117
\pinlabel {(7)} [r] at 83 42
\pinlabel {(8)} [l] at 430 40
\endlabellist
\centering
\includegraphics[scale=0.5]{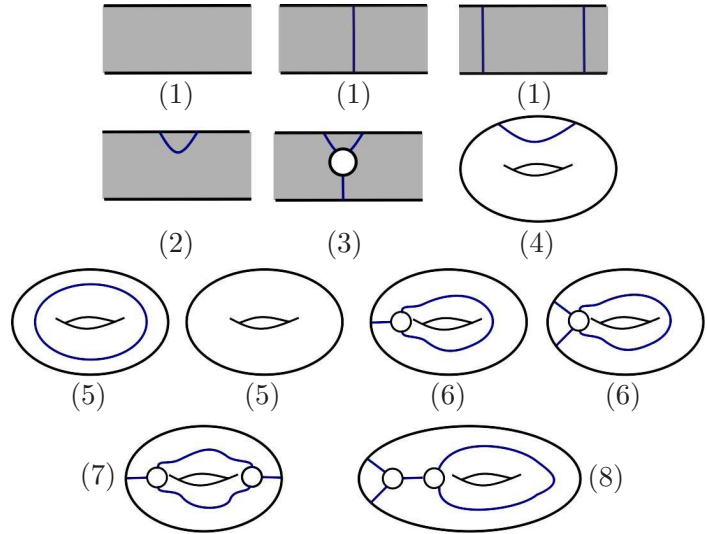}
\caption{The vp-compressionbodies $(E, T_E)$ with $\boundary_+ E$ a torus with 2 or fewer punctures and no component of $\boundary_- E$ a sphere with two or fewer punctures. The shaded rectangles indicate $T^2 \times I$ and the hollow circles denote spherical components of $\boundary_- \punct{E}$.}
\label{fig:TwoPunctTorusCompBdy}
\end{figure}

\begin{proof}
Suppose that $H = \boundary_+ E$ is a torus with $|H \cap T_E| \leq 2$. The case when $|H \cap T_E| = 0$ is covered by Theorem \ref{thin props}. By Corollary \ref{ghostarcgraph cor}, $\boundary_- \punct{E}$ is the union of spheres and at most one torus. Also, the ghost arc graph $\Gamma$ has at most one cycle. If $\boundary_- \punct{E}$ contains a torus, then $\Gamma$ is acyclic. Suppose that $v$ is a spherical vertex of $\Gamma$. If $v$ is isolated, then it must be incident to at least 3 vertical arcs of $\punct{T}_E$. Since $|\boundary_+ E \cap T_E| \leq 2$, this is impossible. Thus, no spherical component of $\boundary_- \punct{E}$ is an isolated vertex of $\Gamma$. If $v$ is a leaf of $\Gamma$, then it must be incident to at least two vertical arcs. Thus, at most one leaf of $\Gamma$ is a spherical component of $\boundary_- \punct{E}$. 

We conclude that if $\boundary_- E$ contains a torus, then $\boundary_- \punct{E}$ contains at most one sphere. If $\boundary_- \punct{E}$ contains a torus but does not contain any spheres, then $(E, T_E)$ is a trivial product compressionbody or $E = T^2 \times I$ and $T_E$ is a bridge arc. This is Conclusion (1) or (2).  If it contains a torus and one sphere, then there is a ghost arc joining the two components of $\boundary_- \punct{E}$, as the sphere cannot be incident to three vertical arcs. The spherical component is incident to two vertical arcs of $\punct{T}_E$ and the torus component is not incident to any vertical arcs. This is Conclusion (3).  

If $\boundary_- \punct{E} = \nil$, then $E$ is a solid torus and $T_E$ is  empty, a core loop or bridge arc, giving Conclusion (4) or (5). Assume, therefore, that $\boundary_- \punct{E}$ is the nonempty union of spheres.  By our previous remarks, each vertex of $\Gamma$ must have degree at least 2.  Since $\Gamma$ does not have isolated vertices, it must contain an edge. Since it has at most one leaf, it must contain a cycle. By our previous remarks, $\Gamma$ contains a unique cycle and at most one vertex not in the cycle. 

Suppose that $v$ is a vertex of $\Gamma$ belonging to the cycle. If $v$ has degree 2, then the corresponding spherical component of $\boundary_- \punct{E}$ must be incident to at least one vertical arc. Thus, if $\Gamma$ is a cycle, then it contains at most two vertices. If it contains a single vertex, then we have Conclusion (5). If $\Gamma$ is a cycle with two vertices, then we have Conclusion (6). 

Finally, suppose that $\Gamma$ contains a vertex $v$ not in the cycle. That vertex $v$ must be the unique such vertex and must be incident to two vertical arcs. Thus, there can be no other vertical arcs. We arrive at Conclusion (7) or (8).
\end{proof}

\subsection{When $\boundary_+ E$ is an unpunctured genus 2 surface}

The case when $\boundary_+ E$ is a genus two surface disjoint from $T$ is quite simple.

\begin{lemma}\label{genus2disjt}
Suppose that $\boundary_+ E$ is a genus two surface disjoint from $T_E$. Then one of the following occurs:
\begin{enumerate}
\item $E$ is a genus 2 handlebody and $T_E$ is either a knot or 2-component link contained in a spine for $E$.
\item $(\punct{E}, \punct{T}_E)$ is obtained from a (genus 2 handlebody, spine) pair by puncturing the vertices.
\item $\boundary_- E$ is a single torus and $T_E$ is empty or a ghost arc
\item $\boundary_- \punct{E}$ is the union of a torus and a thrice punctured sphere. $\punct{T}_E$ is the union of a ghost arc joining the components of $\boundary_- \punct{E}$ and a ghost arc with both ends on the spherical component of $\boundary_- \punct{E}$.
\item $\boundary_- E$ is the union of two tori and $T_E$ is either empty or a ghost arc joining the tori.
\item $(E, T_E)$ is a trivial product compressionbody with $T_E =\nil$.
\end{enumerate}
\end{lemma}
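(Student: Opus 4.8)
The plan is to analyze the ghost arc graph $\Gamma$ of $(E,T_E)$, as in the proofs of Lemmas \ref{sphere4} and \ref{toruscompbody}, but to organize the argument around the index $\delta(E,T_E)$ together with the genus estimate of Lemma \ref{ghostarcgraphlem}. Since $\boundary_+ E$ is disjoint from $T_E$, no component of $\punct{T}_E$ is a bridge arc or a vertical arc, so each component is a ghost arc or a core loop. Write $G$ for the sum of the genera of the components of $\boundary_- \punct{E}$, and let $V$, $E_\Gamma$, and $c$ denote the numbers of vertices, edges, and components of $\Gamma$, so that $b_1(\Gamma)=E_\Gamma-V+c$. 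Tubing parallel copies of the components of $\boundary_- \punct{E}$ along the ghost arcs produces the frontier surface $S$ of Lemma \ref{ghostarcgraphlem}, whose genus is $G+b_1(\Gamma)$; since $\boundary_+ E$ has genus two, Lemma \ref{ghostarcgraphlem} gives
\[
G+b_1(\Gamma)\le 2 .
\]
Because $\extent(\boundary_+ E)=1$, the index is $\delta(E,T_E)=1-\extent(\boundary_- \punct{E})$. I would then record two structural facts. Every spherical component of $\boundary_- \punct{E}$ has at least three punctures: a genuine component of $\boundary_- E$ cannot be a sphere with two or fewer punctures by the standing hypothesis, and a spherical component created by puncturing an internal vertex inherits that vertex's degree, which is at least three. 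Hence each component of $\boundary_- \punct{E}$ has non-negative extent, so $\extent(\boundary_- \punct{E})\ge 0$ and $\delta\in\{0,1\}$; moreover each spherical vertex of $\Gamma$ has degree at least three, a loop counting twice. The genus bound, the degree bound, and this dichotomy for $\delta$ drive the remaining case analysis.

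If $\delta=1$ then $\extent(\boundary_- \punct{E})=0$, which forces every component of $\boundary_- \punct{E}$ to be an unpunctured torus; in particular $\Gamma$ has no edges, so $\punct{T}_E$ consists only of core loops, and the genus bound leaves only $\boundary_- \punct{E}=\nil$, one torus, or two tori. When $\boundary_- \punct{E}=\nil$, the manifold $E$ is a genus two handlebody; cutting along a complete collection of sc-discs places the core loops in a spine, and since that spine has first Betti number two their number is at most two, giving Conclusion (1). When $\boundary_- \punct{E}$ is one or two tori, cutting along the sc-discs exhibits $(E,T_E)$ as a union of $T^2\times I$ pieces containing no vertical arcs, so no core loop can survive and $T_E=\nil$; these are the $T_E=\nil$ instances of Conclusions (3) and (5).

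If $\delta=0$ then Lemma \ref{The delta lemma} applies, and since $\boundary_+ E$ is neither a sphere nor a torus, $(\punct{E},\punct{T}_E)$ is of type (VP4): $\Gamma$ is connected, $\punct{T}_E$ consists solely of ghost arcs, and $\boundary_- \punct{E}$ together with the ghost arcs is a spine of $\punct{E}$. Now $c=1$ and $\delta=0$ force $G+b_1(\Gamma)=2$, and I would finish by enumerating the connected graphs $\Gamma$ meeting this equation and the degree constraint. If $G=0$, every vertex is a sphere of degree at least three, and the constraints leave only a single four-valent vertex with two loops or two trivalent vertices forming a theta or dumbbell graph---the punctured spine of a genus two handlebody, Conclusion (2). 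If $G=1$, the options are a single torus carrying one ghost-arc loop, or a torus together with a thrice-punctured sphere in the unique admissible configuration---Conclusions (3) and (4). If $G=2$, then $b_1(\Gamma)=0$, so $\Gamma$ is a tree in which no spherical vertex can occur, and $\Gamma$ is either a single genus two vertex (the trivial product, Conclusion (6)) or two tori joined by a single ghost arc (Conclusion (5)).

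The main obstacle is the bookkeeping in the $\delta=0$ enumeration: verifying that the degree and parity constraints on spherical vertices leave exactly the graphs listed above, and that each surviving combinatorial type is realized by an honest vp-compressionbody matching the conclusions. A secondary subtlety, already used above, is confirming that in the $\delta=1$ torus cases the sc-disc decomposition really forecloses every core loop.
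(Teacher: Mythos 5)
Your reorganization of the argument around the index $\delta(E,T_E)\in\{0,1\}$ and Lemma \ref{The delta lemma} is a genuine alternative to the paper's proof, which runs entirely on the genus estimate of Lemma \ref{ghostarcgraphlem}, and your $\delta=0$ enumeration via type (VP4) is correct and considerably more detailed than the paper's one-line treatment of those cases. However, the $\delta=1$ case contains a real gap, and it is exactly the step you flagged as a ``secondary subtlety.'' The claim that when $\boundary_-\punct{E}$ is one or two tori, ``cutting along the sc-discs exhibits $(E,T_E)$ as a union of $T^2\times I$ pieces containing no vertical arcs, so no core loop can survive'' does not follow from the definition of vp-compressionbody: the pieces of $(E,T_E)\setminus\Delta$ may also include trivial ball compressionbodies, and a core loop cut along a cut-disc becomes precisely the unknotted arc in such a ball. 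Concretely, join $(T^2\times I,\nil)$ to a (solid torus, core loop) by a $1$-handle running from $T^2\times\{1\}$ to the boundary of the solid torus. The resulting pair $(E,T_E)$ has $\boundary_+E$ a genus two surface disjoint from $T_E$, $\boundary_-E$ a single unpunctured torus (so $\delta=1$), and $T_E$ a core loop; cutting along the co-core of the $1$-handle and a cut-disc for the core loop yields $(T^2\times I,\nil)\sqcup(B^3,\text{unknotted arc})$, so this is a vp-compressionbody. Hence in the one-torus subcase you cannot conclude $T_E=\nil$: this configuration satisfies the hypotheses but is not on the list. (In the two-tori subcase your conclusion is right, but for a different reason: applying the genus estimate to the frontier of a neighborhood of $\boundary_-\punct{E}$ together with \emph{all} of $\punct{T}_E$, each core loop contributes one to the genus, so two negative tori leave no room.)

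To be fair, this is not only your gap; it is the paper's as well. The paper's proof asserts that ``$\punct{T}_E$ is the union of ghost arcs,'' silently discarding core loops once bridge and vertical arcs are ruled out, and the example above fits none of the stated conclusions---Conclusion (3) would have to allow $T_E$ to be a core loop. The repair, for either argument, is to run the genus bound with core loops included (as the paper does implicitly in proving Theorem \ref{thin props}(1)), giving $G+b_1(\Gamma)+(\text{number of core loops})\le 2$ in your notation; the only overlooked configuration is then a single negative torus together with one core loop, which must either be added to the statement or shown to be harmless where the lemma is applied. In your framework this corrected inequality would have surfaced the missing case automatically; as written, the dismissal of core loops in the $\delta=1$ torus cases is the one step that fails.
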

\begin{proof}
Let $\Gamma$ be the ghost arc graph for $(E, T_E)$. If $v$ is an isolated vertex, degree 1 vertex, or degree 2 vertex corresponding to a spherical component of $\boundary_- \punct{E}$ then it must be incident to at least one vertical arc. Such an arc would mean that $\boundary_+ E \cap T_E \neq \nil$, a contradiction. Thus, there are no such vertices. Since there can also be no bridge arcs in $T_E$, $\punct{T}_E$ is the union of ghost arcs. Let $S$ be the frontier of a regular neighborhood of $\boundary_- \punct{E} \cup \punct{T}_E$

By Lemma \ref{ghostarcgraphlem}, the genus of $\boundary_+ E$ is at least the genus of $S$. Thus, the genus of $S$ is at most 2. If $S$ is empty, we have Conclusion (1). Assume, therefore, that $S \neq \nil$. If $\boundary_- E = \nil$, we have Conclusion (2). If $\boundary_- E$ contains a single torus, we have Conclusion (3) or (4). If $\boundary_- E$ contains two tori, we have Conclusion (5). If $\boundary_- E$ is a genus two surface, then we have Conclusion (6).
\end{proof}

The next proposition follows immediately by applying Lemma \ref{genus2disjt} to the vp-compressionbodies on either side of a vp-bridge surface $H$.

\begin{proposition}\label{genus2disjtprop}
Suppose that $M$ is closed, $T$ is connected and $H \in \H(M,T)$ is a genus two surface disjoint from $T$, then $H$ is a Heegaard surface for $M\setminus T$ and either $T$ is a knot of tunnel number at most 1 or $T$ is a genus 2 graph with handlebody exterior.
\end{proposition}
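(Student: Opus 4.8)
The plan is to apply Lemma~\ref{genus2disjt} to the two vp-compressionbodies on either side of $H$, so I first check its hypotheses. Since $H\in\H(M,T)$ is a single connected thick surface and $M$ is closed, there are no thin surfaces, and we may write $(M,T)\setminus H = (C_1,T_1)\cup(C_2,T_2)$ with $\boundary_+ C_1 = H = \boundary_+ C_2$. The defining relation $\mc{H}^-\cup\boundary M = \bigcup\boundary_- C$ then gives $\boundary_- C_1 = \boundary_- C_2 = \nil$, because $\mc{H}^- = \nil$ and $\boundary M = \nil$. In particular the standing hypothesis of Section~\ref{sec:vp-compressionbodies}, that no component of $\boundary_- E$ is a sphere with at most two punctures, holds vacuously on each side, so Lemma~\ref{genus2disjt} applies to both $(C_1,T_1)$ and $(C_2,T_2)$.

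Next I would use connectedness. As $H$ separates $M$ and is disjoint from $T$, the connected graph $T$ lies entirely in the interior of one of the two sides; say $T\subset\interior C_1$, so that $T_2 = \nil$. Because $\boundary_- C_2 = \nil$, each of the conclusions (3)--(6) of Lemma~\ref{genus2disjt} (all of which require nonempty negative boundary) is excluded, and since $T_2 = \nil$ conclusion (2) is excluded as well; thus $(C_2,T_2)$ falls under conclusion (1) with empty $T_2$, i.e. $C_2$ is a genus~2 handlebody.

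Now I would apply Lemma~\ref{genus2disjt} to $(C_1,T)$. Again $\boundary_- C_1 = \nil$ rules out (3)--(6), leaving two cases. In case (1), $C_1$ is a genus~2 handlebody and $T$ is a knot or $2$-component link contained in a spine for $C_1$; since $T$ is connected it is a knot, and $C_1\setminus T$ is a compressionbody whose negative boundary is a torus. In case (2), $T$ is a genus~2 graph that is a spine of $C_1$, so $C_1\setminus T$ is a collar $H\times I$, a trivial product compressionbody. In either case $M\setminus T = (C_1\setminus T)\cup_H C_2$ displays $H$ as a Heegaard surface for $M\setminus T$, since $C_2$ is a handlebody. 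In case (1), $\chi(T)=0$ gives $\t(M,T)=\g(M\setminus T)-1\le 1$, so $T$ is a knot of tunnel number at most $1$; in case (2), $M\setminus T\cong C_2$ is a handlebody, so $T$ is a genus~2 graph with handlebody exterior.

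The argument is mostly bookkeeping once Lemma~\ref{genus2disjt} is available. The one point needing care is the reduction of its lengthy list of conclusions to the two handlebody cases, which rests on the observation that $M$ being closed forces $\boundary_- C_1 = \boundary_- C_2 = \nil$; combined with the connectedness of $T$, this is what pins down one side as an honest handlebody and the other as carrying $T$ either as a knot in a spine or as a spine itself. I anticipate no genuinely difficult step.
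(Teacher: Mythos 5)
Your proof is correct and takes exactly the paper's route: the paper's entire proof is the single remark that the proposition ``follows immediately by applying Lemma~\ref{genus2disjt} to the vp-compressionbodies on either side of a vp-bridge surface $H$,'' and your argument simply fills in that bookkeeping (using $M$ closed to force $\boundary_- C_1 = \boundary_- C_2 = \nil$, connectedness of $T$ to isolate it on one side, and the two surviving conclusions (1) and (2) of the lemma to get the knot/handlebody-exterior dichotomy). The only cosmetic point is that when $T_2=\nil$ you invoke conclusion (1) of Lemma~\ref{genus2disjt} with an empty graph, which is not literally how that conclusion is phrased, but a vp-compressionbody with $\boundary_- C = \nil$ and no graph is a handlebody directly from the definition, so nothing is lost.
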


\section{Types of graphs of low net extent}\label{sec:pairs of low extent}

The results of this section roughly correspond to the task in knot theory of understanding knots having either bridge spheres with few punctures (i.e. the unknot and 2-bridge knots) or a genus 1 bridge surface with few punctures (i.e. the (1,1) knots).  We begin by defining the knot and graph types relevant to our investigation and then show that none of them are Brunnian $\theta$-graphs.

\subsection{Special examples}
We begin with some special classes of genus 2 spatial graphs. It turns out that they all have $\netextent_\infty(M,T) \leq 1$. We adapt the notation ``$(g,b)$-curve'' from knot theory \cite{Doll}, which in that context means that a knot has a genus $g$ bridge surface intersecting the knot in $2b$ points and the pair $(g,b)$ is minimal is some ordering on such bridge surfaces. (We do not need the complete definition.) Throughout we assume that $(M,T)$ is a nontrivial irreducible connected pair with $T$ a knot, link or genus 2 graph.

The pair $(M,T)$ is a \textbf{(lens space, core loop) pair} or \textbf{(1,0)-curve} if $M$ is a lens space $(\neq S^3, S^1 \times S^2)$ and $T$ is a core loop with respect to a Heegaard torus $H$ for $M$. Note that $\netextent(H) = 0 = \netchi(H)$. The pair $(M,T)$ with $T$ a handcuff graph is a \defn{Hopf graph}, if $M$ is closed and there is a torus $H \in \H(M,T)$ intersecting $T$ in a single point (necessarily in the separating edge of $T$). Note that $\netextent(H) = 1/2$ and $\netchi(H) = 0$. See Figure \ref{hopfgraph}. 

\begin{figure}
\includegraphics[scale=0.35]{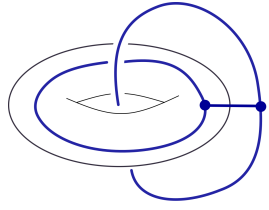}
\caption{The Hopf graph in $S^3$ or a lens space. The torus is a genus 1 Heegaard surface.}
\label{hopfgraph}
\end{figure}

If there exists a sphere $H \in \H(M,T)$ such that $|H \cap T| = 4$, then $H$ is a \defn{2-bridge sphere}. Note that $\netextent(H) = 1$ and $\netchi(H) = -2$. A Hopf graph in $S^3$ admits a 2-bridge sphere, as does the trivial 2-bouquet. So we define a pair $(M,T)$ to be \defn{2-bridge} or a \defn{(0,2)-curve} if it is not a Hopf graph or trivial 2-bouquet and yet admits a 2-bridge sphere. Schematic depictions of the two types of 2-bridge genus 2 graphs are shown in Figure \ref{2bridge}.  It turns out that if a 2-bouquet has a 2-bridge sphere, then it is trivial, so there are no 2-bridge trivial 2-bouquets.

\begin{figure}
\includegraphics[scale=0.3]{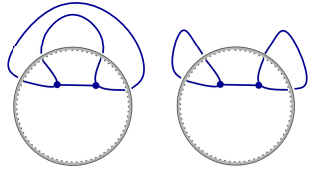}
\caption{Two-bridge genus 2 graphs}
\label{2bridge}
\end{figure}

If the pair $(M,T)$ is neither trivial nor 2-bridge but $M$ is closed and there exists a twice-punctured torus $H \in \H(M,T)$, then $(M,T)$ is a \defn{(1,1)-curve}. Note that $\netextent(H) = 1$ and $\netchi(H) = 0$. Figure \ref{oneonegraphfig} depicts the three kinds of genus 2 graphs with (1,1)-bridge surfaces.

If $M$ is neither a trivial, 2-bridge, Hopf graph, or  (1,1)-curve but there exists an unpunctured genus 2 surface $H \in \H(M,T)$ (equivalently, a genus 2 Heegaard surface for $M \setminus T$) then $(M,T)$ is a \defn{(2,0)-curve}. Note that $\netextent(H) = 1$ and $\netchi(\mc{H}) = 2$. For convenience, say that $(M,T)$ is \defn{knotted of low complexity}, if it is a (1,0)-curve, (0,2)-curve, a (1,1)-curve, or a (2,0)-curve. 

\begin{figure}
\centering
\includegraphics[scale=0.5]{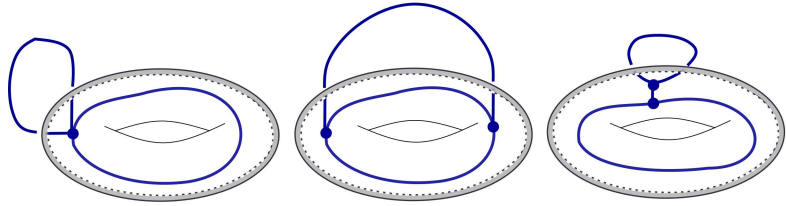}
\caption{Depictions of  (1,1) 2-bouquets, $\theta$-curves, and handcuff curves. The dark band indicates that there may be braiding arising from the homeomorphism gluing the two solid tori together.}
\label{oneonegraphfig}
\end{figure}

We have two more classes of knots and graphs to define. The first is a knot which we call a ``propeller knot.'' There are two types of propeller knot, both pictured in Figure \ref{fig:propellerknot}. As with our other examples it is defined using a multiple vp-bridge surface, although now the multiple vp-bridge surface is disconnected. Suppose that $(M,T)$ is an irreducible pair with $T$ a knot  and $M$ closed. Suppose that $(M,T)$ admits an oriented multiple vp-bridge surface $\mc{H}$ such that $\mc{H}^+$ is the union of two unpunctured genus 2 surfaces and $\mc{H}^-$ is either a single twice-punctured torus or two once-punctured tori. Observe that $\netchi(\mc{H}) = 4$ and $\netextent(\mc{H}) = 1$. If $(M,T)$ is nontrivial and is not knotted of low complexity, then $(M,T)$ is a \defn{propeller knot}. The surface $\mc{H}$ is the \defn{standard propeller surface}.

\begin{figure}
\labellist
\small\hair 2pt
\pinlabel $T^2$ [r] at 35 154
\pinlabel $T^2$ [r] at 258 154
\pinlabel $T^2$ [l] at 423 154
\endlabellist
\centering
\includegraphics[scale=0.5]{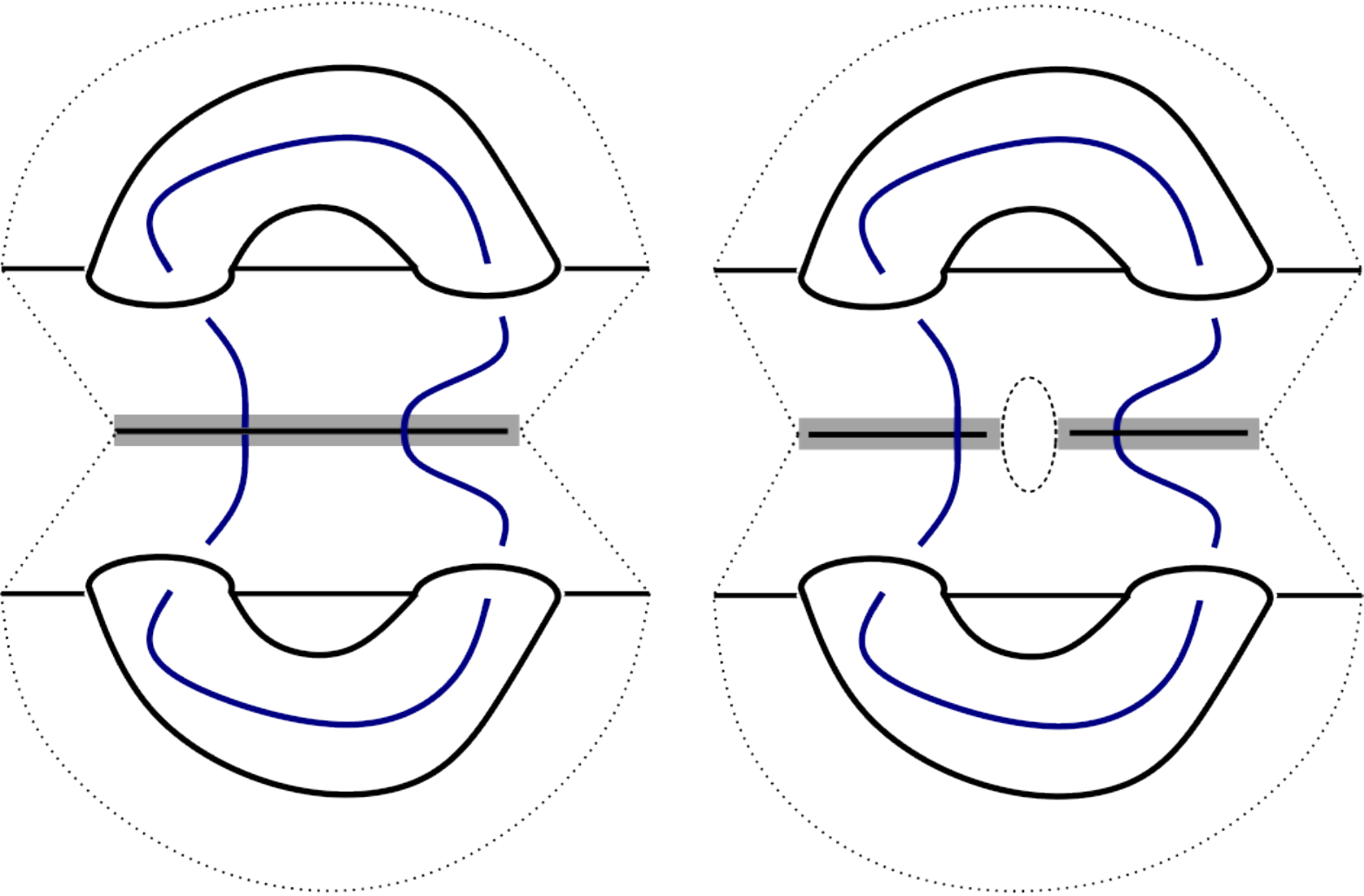}
\caption{Schematic depictions of two types of propeller knot. On the left is shown the type where $\mc{H}^-$ is a twice punctured torus and on the right is shown the type where $\mc{H}^-$ is two once punctured tori. The dark regions are a reminder that the gluing map may be complicated. The dashed lines enclose the 3-manifolds; for instance, there is a genus 2 handlebody above the upper genus 2 surfaces.}
\label{fig:propellerknot}
\end{figure}

Our final class of spatial graphs is the class of ``Hopf slinkies.'' To define them, we need to begin by considering some higher genus spatial graphs. They are depicted in Figures \ref{hopfringlet} and \ref{hopfified}. Note that each has at least one vertex of degree 4.

Suppose that $(M,T')$ is a pair such that $T'$ is connected, $M$ is closed, and $\boundary \eta(T')$ is a genus 3 surface. Suppose also that there exists a torus $H \in \H(M,T)$ such that $|H \cap T| = 2$. If $T$ has two vertices, two loops one based at each vertex, and two edges joining the two vertices and if $H$ is disjoint from the two loops, then $(M,T)$ is a \defn{Hopf ringlet}. See Figure \ref{hopfringlet}. Suppose that $T$ has three vertices $v_1, v_2, v_3$ (the labelling is immaterial). If there is a loop based at $v_1$, two edges joining $v_2$ to $v_3$, and an edge joining $v_1$ to each of $v_2$ and $v_3$ and if $H$ intersects both of the latter two edges, then $(M,T)$ is a \defn{Hopfified $\theta$-curve}. If there are loops based at $v_1$ and $v_3$, a single edge joining $v_2$ and $v_3$, and two edges joining $v_1$ to $v_2$ and if $H$ intersects both of these latter two edges, then $(M,T)$ is a \defn{Hopfified handcuff curve}. See Figure \ref{hopfified}. In all three cases, we call $H$ the \defn{associated torus}. Observe that since in all three cases, there is a 2-component sublink of $T$ of linking number 1, no pair in these three classes of spatial graphs can be trivial. 

\begin{figure}
\centering
\includegraphics[scale=0.5]{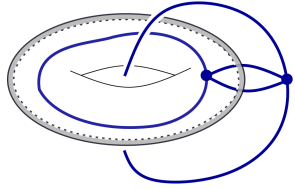}
\caption{A Hopf ringlet}
\label{hopfringlet}
\end{figure}

\begin{figure}
\centering
\includegraphics[scale=0.5]{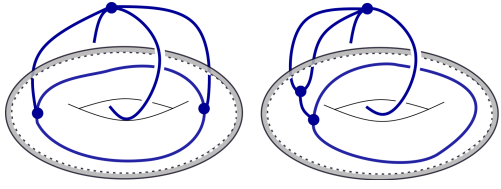}
\caption{On the left is a hopfified $\theta$-curve and on the right is a hopfified handcuff curve. In both cases, the torus is twice-punctured and the grey band indicates some homeomorphism of the twice-punctured torus to itself. It is suggestive that the exterior of both is a genus 3 handlebody.}
\label{hopfified}
\end{figure}

A pair $(M,T)$ with $T$ a knot, $\theta$-curve, handcuff curve, or 2-bouquet is a \defn{Hopf slinky} if it is a 4-valent vertex sum of the form:
\[
(M_1, T_1) \#_4 (M_2, T_2) \#_4 \cdots \#_4 (M_p, T_p)
\]
for $p \geq 2$, such that:
\begin{itemize}
\item For each $i < p$, $(M_i, T_i)$ and $(M_{i+1}, T_{i+1})$ are 4-valent vertex summed
\item $(M_1, T_1)$ is either a Hopfified $\theta$-curve, a Hopfified handcuff curve, Hopf ringlet, (1,1)-curve that is a 2-bouquet, or (2,0)-curve that is a 2-bouquet 
\item $(M_p, T_p)$ is either a (1,1)-curve or a (2,0)-curve that is a 2-bouquet. 
\item Each $(M_i, T_i)$ for $1 < i < p$ is a Hopf ringlet.
\end{itemize}
The factorization is called the \defn{slinky factorization}. See Figure \ref{fig: hopfslinky} for an example. If all the 4-punctured spheres in $(M,T)$ arising from the vertex sums are essential in $(M,T)$, we say that $(M,T)$ is an \defn{essential Hopf slinky}. The pairs $(M_1, T_1)$ and $(M_p, T_p)$ are the \defn{ends of the slinky}. From the definition, we can construct a multiple vp-bridge surface $\mc{H}$ for a Hopf slinky $(M,T)$ where the 4-punctured spheres corresponding to the 4-valent vertex sums comprise $\mc{H}^-$ and  in $(M_i, T_i)$ for $i \neq 1,p$, $\mc{H} \cap M_i$ is a twice-punctured torus. The surface $\mc{H} \cap (M_i, T_i)$ for $i = 1,p$ is a twice-punctured torus or unpunctured genus 2 surface. Such a multiple vp-bridge surface is called the \defn{standard slinky surface} for the slinky factorization. Note that $\netextent(\mc{H}) = 1$. We define the \defn{length} $\ell(\sigma)$ of the Hopf slinky $\sigma$ to be the minimum of $\netchi(\mc{H})$ over all standard slinky surfaces for $\sigma$; it is an even integer. Note that if $p$ is the number of factors in a slinky factorization of minimal length, then
\[
2p + 2\geq \ell(\sigma) \geq  2p-2 \geq 2.
\]

\begin{figure}[ht!]
\labellist
\small\hair 2pt
\pinlabel $\tau$ at 347 89
\endlabellist
\includegraphics[scale=0.5]{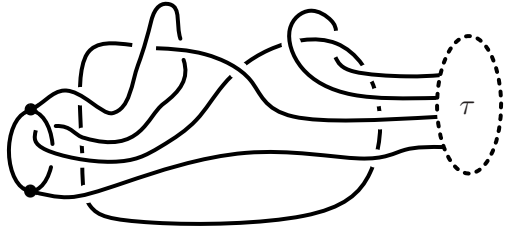}
\caption{To create a simple example of a Hopf slinky of length 4, insert a 2-tangle of Heegaard genus 2 into the ball marked $\tau$ so that the result is a $\theta$-curve. Since the 4-valent vertex sum depends on a choice of spherical 4-braid and the cycle containing both vertices of a Hopf ringlet can be any (1,1) knot, Hopf slinkies can be much more complicated than this.}
\label{fig: hopfslinky}
\end{figure}

We will ultimately prove that these graphs characterize genus 2 graphs with net extent 1. The next two sections are taken up with this task. First, however, we consider whether or not the classes of graph introduced in this section can be Brunnian.

\subsection{A note on Brunnian graphs}

The Kinoshita graph is an example of a $\theta$-curve $T \subset S^3$ such that there exists a sphere $P \in \H(S^3, T)$ that separates the vertices of $T$ and $|P \cap T| = 5$, \cite{Scharlemann-Brunnian}. The sphere $P$ cuts off a bridge arc from one of the edges of $T$. Meridionally stabilizing $P$ along that bridge arc produces a torus $H \in \H(S^3,T)$ that separates the vertices of $T$ and where $|H \cap T| = 3$. Thus, if all we know is that a nontrivial $\theta$-curve $T$ in $S^3$ has a genus 1 bridge surface $H$, with $|H \cap T| = 3$, we cannot conclude from those facts alone that $T$ has a knotted cycle. It would be interesting to classify all Brunnian $\theta$-curves having a genus 1 bridge surface $H$ intersecting the graph in three points. 

On the other hand, it turns out that a genus 2 graph with a bridge sphere having four or fewer punctures, or a vp-bridge torus with two or fewer punctures cannot be Brunnian. We'll use the following theorem of Ozawa and Tsutsumi \cite{OT}, though not in full generality. The version for genus 2 graphs is not difficult to prove directly.

\begin{theorem}[Ozawa-Tsutsumi]\label{thm: OT}
Suppose that $(S^3,T)$ is a pair such that $T$ is abstractly planar, nontrivial, and for every proper subgraph $T' \subset T$, the pair $(S^3,T')$ is trivial. Then the exterior of $T$ in $S^3$ is irreducible and $\boundary$-irreducible. In particular, the exterior of $T$ is not a handlebody.
\end{theorem}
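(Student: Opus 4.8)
The plan is to establish the two conclusions separately, with irreducibility being essentially immediate and $\boundary$-irreducibility carrying all the content. Write $E = S^3 \setminus T$ for the exterior and $\Sigma = \boundary E = \boundary \eta(T)$, a genus $2$ surface since $T$ is a genus $2$ graph. For irreducibility, let $S \subset E$ be any sphere. In $S^3$ it bounds balls on both sides, and since $T$ is connected and disjoint from $S$, all of $T$ lies in one of these balls; the complementary ball then lies in $E$, so $S$ bounds a ball in $E$ and is inessential. Thus $E$ is irreducible, using only connectedness of $T$ and irreducibility of $S^3$. The final clause of the statement is automatic: a handlebody of genus $\geq 1$ has compressible boundary, so once $E$ is shown $\boundary$-irreducible it cannot be a handlebody.

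For $\boundary$-irreducibility, suppose toward a contradiction that $D \subset E$ is a compressing disc, so that $c = \boundary D$ is essential on $\Sigma$. Fix a complete meridian system $\mc{M}$ for the handlebody $\eta(T)$ consisting of meridian discs of edges of $T$ (for a $\theta$-curve, the meridians of two of the three edges; for a handcuff curve or $2$-bouquet, the meridians of the two loops), and isotope $D$ so as to minimize $|c \cap \boundary\mc{M}|$ on $\Sigma$. Since $D$ lies in $E$ and the discs of $\mc{M}$ lie in $\eta(T)$, they meet only along $\Sigma$, so a standard innermost-arc and outermost-disc surgery argument shows that either $|c \cap \boundary\mc{M}|$ can be reduced (contradicting minimality) or $c$ is disjoint from $\boundary\mc{M}$. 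In the latter case $c$ lies on $\Sigma$ cut along $\boundary\mc{M}$, a planar surface, so its isotopy class is highly restricted: up to isotopy $c$ is either a meridian $m_e$ of some edge or loop $e$, or a curve cutting off a handle of $\eta(T)$, and in each case $c$ singles out a proper subgraph $T' \subsetneq T$.

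In either case I would extract that proper subgraph and invoke its triviality. If $c = m_e$ is a meridian bounding $D$ in $E$, then $D$ together with the meridian disc of $e$ in $\eta(T)$ is a sphere meeting $T$ once; combined with irreducibility this lets one cap off and delete $e$, realizing the compression as a $\boundary$-compression of the exterior of the trivial subgraph $T' = T \setminus e$, whose complement is a standard handlebody by triviality and abstract planarity. Reconstructing $T$ by reinserting the now $\boundary$-parallel arc $e$ then exhibits an isotopy of $T$ into a tame sphere. In the handle-cutting case, compressing $\Sigma$ along $D$ splits off a handle corresponding to an edge or loop $e$, again reducing to the trivial subgraph $T'$ obtained by deleting $e$ and allowing $T$ to be isotoped onto the planar sphere carrying $T'$. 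Either way $(S^3,T)$ would be trivial, contradicting the hypothesis, completing the proof.

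The main obstacle is this final reconstruction step: converting the combinatorial position of $c$ relative to $\mc{M}$ into an honest ambient isotopy of $T$ into a sphere. This is precisely where abstract planarity of $T$ is essential, since it guarantees that the complement of the trivial subgraph $T'$ is a genuinely standard handlebody into which the deleted edge reinserts in a controlled, unknotted way; and one must take care that the innermost-disc surgeries in the position argument do not silently replace $D$ by an inessential disc. Checking the few possible isotopy classes of $c$ on a genus $2$ surface, and verifying in each that triviality of $T'$ propagates to triviality of $T$, is the genus $2$ specialization that makes the argument elementary in comparison with the general theorem of Ozawa and Tsutsumi.
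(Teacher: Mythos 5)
The paper does not prove this statement at all: it is imported wholesale from Ozawa--Tsutsumi \cite{OT}, with only the remark that ``the version for genus 2 graphs is not difficult to prove directly.'' So your proposal must be judged as a free-standing direct argument, and it has a genuine gap at its central step. Your irreducibility argument and the final clause are fine (for connected $T$, which is all the paper needs, though the statement as written does not assume connectedness or genus 2). The problem is the $\boundary$-irreducibility step, which, as you say yourself, carries all the content. You claim that after minimizing $|c \cap \boundary\mc{M}|$, ``a standard innermost-arc and outermost-disc surgery argument shows that either $|c \cap \boundary\mc{M}|$ can be reduced \ldots or $c$ is disjoint from $\boundary\mc{M}$.'' No such argument exists: $D$ and the discs of $\mc{M}$ lie on opposite sides of $\Sigma$, so their interiors are disjoint and they meet only in the finite set of points $c \cap \boundary\mc{M}$ on $\Sigma$. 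Innermost-circle/outermost-arc surgeries require arcs or circles of intersection between two surfaces; isolated intersection points of two curves on a surface cannot be surgered away, and their minimal number is the geometric intersection number $i(c, \boundary\mc{M})$, an isotopy invariant which need not vanish. Concretely, take the trivial $\theta$-curve in a 2-sphere: its exterior is a genus 2 handlebody, and the compressing disc obtained by pushing a face of the planar graph off the sphere has boundary crossing the meridian of each of the two edges in that face exactly once (algebraic intersection number $\pm 1$), so it can never be isotoped off any meridian system containing one of those meridians. Since your step invokes no hypothesis on $T$ whatsoever---it is claimed as pure general position---it would have to apply to this example, where it visibly fails. Everything after it (the case analysis of $c$ on the cut-open surface, and the reconstruction) therefore collapses.

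The deeper point is that the hypothesis that every proper subgraph is trivial must enter precisely at the step that controls $\boundary D$, and it cannot be fed in by local cut-and-paste. The way it actually enters (in \cite{OT}, and in the closely related planarity criterion of Scharlemann--Thompson) is through genuinely 3-dimensional tools: for each edge $e$, triviality of $T \setminus e$ says that attaching a 2-handle to the exterior $E(T)$ along the meridian $m_e$ produces the exterior of a trivial graph, hence a handlebody (or solid torus), and one then applies results of the strength of Jaco's handle addition lemma, or the Scharlemann--Thompson theorem that an abstractly planar graph all of whose subgraph exteriors have free fundamental group is planar, to conclude that a compression of $\boundary E(T)$ would force $T$ itself to be trivial. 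Any correct direct proof, even restricted to genus 2, needs an ingredient of this kind. So the missing idea is not the ``reconstruction step'' you flag at the end of your proposal, but the mechanism that constrains the compressing disc in the first place.
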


\begin{corollary}\label{smallBrun}
Suppose that $(S^3,T)$ is an irreducible pair with $T$ a genus 2 graph. If $(S^3,T)$ is trivial, a $(0,2)$-curve, $(1,1)$-curve, $(2,0)$-curve, or Hopf graph then the exterior of $T$ is a genus 2 handlebody. In particular, $T$ is not a Brunnian $\theta$-curve.
\end{corollary}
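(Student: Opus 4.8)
Looking at Corollary~\ref{smallBrun}, I need to prove two things: first, that if $(S^3,T)$ is trivial, a $(0,2)$-curve, $(1,1)$-curve, $(2,0)$-curve, or Hopf graph, then the exterior of $T$ is a genus~2 handlebody; and second, that consequently $T$ is not a Brunnian $\theta$-curve.

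\textbf{Proof proposal.} The plan is to handle the two assertions in sequence, deducing the second from the first via the Ozawa--Tsutsumi theorem. For the first assertion, the key observation is that each of the listed graph types is defined by the existence of a vp-bridge surface $H$ of net extent at most~$1$, and in each case this surface is actually a Heegaard surface for $M\setminus T$. I would argue that the two vp-compressionbodies on either side of $H$ are both genus~2 handlebodies with $T$ lying in spines, so that $M\setminus T$ is a genus~2 handlebody. Concretely: for a $(2,0)$-curve, $H$ is by definition an unpunctured genus~2 Heegaard surface for $M\setminus T$, and since $M=S^3$ is irreducible with no interesting thin surfaces, Lemma~\ref{genus2disjt} (Conclusion~(1), since $(S^3,T)$ is irreducible and $\boundary_-$ must be empty) forces each side to be a genus~2 handlebody containing $T$ in a spine, giving handlebody exterior. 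For the $(1,1)$-curve, Hopf graph, and $(0,2)$-curve cases, $H$ is a twice-punctured torus, a once-punctured torus, or a four-punctured sphere respectively; in each case I would invoke Lemmas~\ref{toruscompbody} and~\ref{sphere4} to identify the two vp-compressionbodies, noting that since $(S^3,T)$ is irreducible there are no essential thin spheres to worry about, and check directly that reassembling the two compressionbodies yields a genus~2 handlebody exterior. The trivial case is immediate since the exterior of a trivial genus~2 graph is a genus~2 handlebody.

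For the second assertion, I would argue by contradiction. Suppose $T$ were a Brunnian $\theta$-curve. Being Brunnian means $T$ is nontrivial (it cannot be isotoped into a sphere) but every proper subgraph can be so isotoped, hence every proper subgraph is trivial; moreover a $\theta$-curve is abstractly planar. These are exactly the hypotheses of Theorem~\ref{thm: OT} (Ozawa--Tsutsumi), whose conclusion is that the exterior of $T$ in $S^3$ is irreducible and $\boundary$-irreducible, and in particular \emph{not} a handlebody. This directly contradicts the first assertion, which says the exterior \emph{is} a genus~2 handlebody. I should also dispose of the trivial case cleanly: a trivial $\theta$-curve is not Brunnian because a Brunnian graph is by definition not isotopic into a sphere, whereas the trivial $\theta$-curve is.

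\textbf{Main obstacle.} The technical heart is the first assertion, specifically verifying in the $(0,2)$-, $(1,1)$-, and Hopf cases that the specified vp-bridge surface really does exhibit the exterior as a genus~2 handlebody. The subtlety is that the structure lemmas (Lemmas~\ref{sphere4} and~\ref{toruscompbody}) list several possible vp-compressionbody types for each side, including ones with nonempty negative boundary or ghost arcs, and I must rule those out using the irreducibility and triviality/nontriviality constraints of the ambient pair $(S^3,T)$ and the fact that the two positive boundaries glue up to the same surface $H$. I expect the cleanest route is to note that since $M=S^3$ is irreducible and every sphere bounds a ball, any spherical negative boundary component of a compressionbody would have to be inessential, which combined with the condition that no component of $\boundary_- E$ is a sphere with two or fewer punctures collapses the enumeration down to the handlebody-with-spine cases; the genus count $-\chi$ of $H$ then forces the exterior to be genus~2. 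Once the two sides are both handlebodies, gluing them along $H$ gives $S^3 \setminus T$ as a genus~2 handlebody, completing the argument.
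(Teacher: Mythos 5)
Your overall strategy coincides with the paper's: establish that the exterior of $T$ is a genus 2 handlebody, then derive the Brunnian contradiction from Theorem \ref{thm: OT}, and your treatment of that second step (including disposing of the trivial case via nontriviality in the definition of Brunnian) is essentially identical to the paper's. Where you genuinely diverge is in how the handlebody exterior is established for the $(0,2)$-, $(1,1)$-, and Hopf cases. The paper uses one uniform trick: starting from the defining surface $H$, it tubes $H$ along all bridge arcs on one side (for a $(0,2)$- or $(1,1)$-curve, one side of $H$ consists of bridge arcs only), and for the Hopf graph it removes a neighborhood of one loop and the adjacent vertical arc; either way this produces an \emph{unpunctured} genus 2 surface $H' \in \H(M,T)$, to which Proposition \ref{genus2disjtprop} applies verbatim. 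You instead analyze both vp-compressionbodies flanking the original punctured $H$ via Lemmas \ref{sphere4} and \ref{toruscompbody} and then reassemble. That route does work, but two points in your sketch need repair. First, the reason the enumeration collapses is not that spheres in $S^3$ are inessential: since $M$ is closed and $H$ is a connected vp-bridge surface, $\boundary_- C = \nil$ for both sides by definition, and the only components of $\boundary_- \punct{C}$ are vertex spheres (which automatically have at least three punctures); no separate irreducibility argument is needed or relevant there. Second, the actual content of your ``check directly that reassembling yields a handlebody'' step is the observation that on the side containing the vertices, the ghost arcs together with the vertex spheres form a spine of the punctured compressionbody, so that side's graph exterior is a product $(H \setminus \eta(T)) \times I$; gluing that product to the genus 2 handlebody coming from the bridge-arc side (ball minus two bridge arcs, or solid torus minus one bridge arc, or the product in the Hopf case) is what gives the genus 2 handlebody. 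This is exactly the work that Proposition \ref{genus2disjtprop} (via Lemma \ref{genus2disjt}) packages for the paper, and you should make it explicit rather than deferring it. Finally, a small misattribution: in the $(2,0)$ case the relevant conclusion of Lemma \ref{genus2disjt} is (2) (punctured spine), not (1), since $T$ has vertices, and only the side containing $T$ has $T$ as a spine --- the other side is a handlebody containing no part of $T$.
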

\begin{proof}
If $M \setminus T$ has a genus 2 Heegaard surface $H'$, then the result follows immediately from Proposition \ref{genus2disjtprop} and Theorem \ref{thm: OT}. We will show that this is the case in each situation.

The cases when $(M,T)$ is a (2,0)-curve or trivial are immediate. Suppose therefore $H$ is a vp-bridge surface for $(M,T)$ realizing the fact that it is a $(0,2)$-curve, $(1,1)$-curve or Hopf curve. Choose a side of $H$ in $M$ and tube $H$ along all bridge arcs of $T \setminus H$ on that side, obtaining $H' \in \H(M,T)$.  If $(M,T)$ is a (0,2)-curve or (1,1)-curve, then one side of $H$ contains only bridge arcs and so we can construct a genus two unpunctured surface $H' \in \H(M,T)$, as desired. For a Hopf graph, there are no bridge arcs, but if we remove a regular neighborhood of one loop and then a regular neighborhood of the vertical arc on that side, we again construct a genus two unpunctured $H' \in \H(M,T)$.
\end{proof}

We also need the fact that Hopf slinkies are not Brunnian.

\begin{proposition}\label{no Brunnian slinky}
Suppose that $(M, T)$ is an essential Hopf slinky with $T$ a $\theta$-curve and that it is of the form 
\[
(M_1, T_1) \#_4 (M_2, T_2) \#_4 \cdots \#_4 (M_p, T_p)
\]
as in the definition. Then $T$ is not Brunnian.
\end{proposition}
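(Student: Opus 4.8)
The plan is to produce a knotted cycle in $T$. A nontrivial $\theta$-curve in $S^3$ is Brunnian precisely when all three of its cycles are unknotted, and the Hopf-sublink observation already rules out $T$ being trivial, so it suffices to exhibit a single knotted cycle; equivalently, I must show that a Hopf slinky is never a ``ravel.'' (Brunnian-ness is an $S^3$ notion, so I may assume $M = S^3$; otherwise there is nothing to prove.)

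First I would unwind the slinky factorization to see how the loops and connecting edges of the factors $(M_i, T_i)$ are spliced, through the $4$-valent vertex sums, into the three edges of $T$. In each Hopf ringlet the two loops are cores of the solid tori bounded by the associated torus, hence a Hopf sublink of linking number $1$, while the end factors contribute a $(1,1)$- or $(2,0)$-pattern. Tracking the spherical $4$-braids used at each vertex sum, I would single out a cycle $\gamma$ of $T$ that runs through the cores of these Hopf-linked solid tori in succession, and (by choosing which of the three edges to delete) try to arrange that the two punctures of each associated torus lie on the deleted edge, so that the associated tori are disjoint from $\gamma$.

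The heart of the argument is then to produce an essential torus in the exterior of $\gamma$ from the associated tori $\wihat{H}_1, \wihat{H}_2, \dots$. No single ringlet knots $\gamma$, since one associated torus is boundary-parallel once the other cores are forgotten; but with $p \geq 2$ factors chained together, and with all of the $4$-punctured summing spheres essential (the Hopf slinky is essential), the solid torus cut off by $\wihat{H}_1$ on its far side still contains the linked cores of the remaining factors. This should obstruct both compressing $\wihat{H}_1$ and isotoping it parallel to $\partial \eta(\gamma)$, so $\wihat{H}_1$ is essential in the exterior of $\gamma$. As the exterior of an unknot is a solid torus and contains no essential torus, $\gamma$ must be knotted, whence $T$ is not Brunnian.

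I expect the third step to be the main obstacle: certifying incompressibility and non-boundary-parallelism of $\wihat{H}_1$ in the exterior of $\gamma$ alone, rather than in the exterior of all of $T$. The arbitrary braiding allowed at the vertex sums and the possibly complicated $(1,1)$-cores at the ends could, a priori, permit a compression invisible at the level of the block decomposition; I would control this by reading the essential $4$-punctured spheres of $\mc{H}^-$ as the certificate that consecutive companion tori are genuinely linked, and by treating the $(2,0)$ and $(1,1)$ end factors separately. A second, more bookkeeping obstacle is the disjointness arrangement in the second step: if the two punctures of some associated torus are forced onto different edges of $T$, no choice of deleted edge makes it disjoint from $\gamma$, and I would instead have to argue knottedness using the corresponding twice-punctured associated torus (or an essential annulus) in the exterior of $\gamma$.
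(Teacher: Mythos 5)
Your strategy has a fundamental flaw, and it is not the one you flagged as the ``main obstacle'' --- it is worse than an obstacle. You plan to fix a cycle $\gamma$ of $T$ in advance and certify that it is knotted by exhibiting an essential (possibly punctured) torus in the exterior of $\gamma$ alone. First, a structural point: for a $\theta$-curve slinky the two degree-3 vertices and two of the edges (call them $a$ and $b$) lie in the Hopfified $\theta$-curve factor $(M_1,T_1)$, and the \emph{entire} remaining apparatus --- the loop of $T_1$, the cores of the ringlets, and all the punctures of every associated torus --- is spliced into the single third edge $e$. A $\theta$-curve has only three cycles, $a\cup b$, $a\cup e$, $b\cup e$, so the configuration you hope for (cores on $\gamma$, punctures on the deleted edge) never occurs: any cycle through the cores contains all the punctures, while the one cycle disjoint from the associated tori, $a\cup b$, contains no cores and may perfectly well be a round circle. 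Second, and fatally: essentiality of a surface in $(M,T)$ does not persist in $(M,\gamma)$, because a compressing disc in the exterior of $\gamma$ is allowed to cross the deleted edge. Indeed no fixed cycle of an essential Hopf slinky need be knotted --- the proposition is really the assertion that $a\cup e$ and $b\cup e$ cannot \emph{both} be unknotted --- and whenever the chosen $\gamma$ happens to be unknotted, its exterior is a solid torus, so every punctured torus and every $4$-punctured sphere in it compresses. The essentiality you would need to prove is therefore false in general, and no care with the block decomposition, linking numbers of consecutive cores, or separate treatment of the end factors can rescue it. Any correct proof must play the two cycles through $e$ against each other. (A smaller slip: the slinky $\theta$-curve $T$ itself has no $2$-component sublink, so the ``Hopf-sublink observation'' does not apply to $T$; but this is harmless, since a knotted cycle by itself contradicts Brunnian-ness.)

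The paper's proof is exactly such a joint argument, by contradiction and via tangle theory rather than essential surfaces. Assuming $T$ Brunnian, both $\lambda_0=a\cup e$ and $\lambda_1=b\cup e$ are unknots, so the summing sphere $F$ cutting off the Hopfified $\theta$-curve is compressible in each $(M,\lambda_i)$; essentiality of $F$ in $(M,T)$ forces the compression to lie in the ball $W$ containing the vertices (a disc on the other side would be disjoint from all of $T$), and the absence of local knots makes each $(W,\lambda_i\cap W)$ a rational tangle. The Hopfified structure provides a disc bounded by the cycle $a\cup b$ and punctured once by $e$; closing both tangles with one and the same trivial tangle then yields links with different linking numbers, so the two rational tangles are inequivalent. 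Essentiality of $F$ also shows the outside tangle $(S^3\setminus W, T\setminus W)$ is prime, and the theorem of Bleiler--Scharlemann (as in Eudave-Mu\~noz) says two inequivalent rational tangles cannot both fill a prime tangle so as to produce $S^3$ --- contradicting that both $\lambda_0$ and $\lambda_1$ are unknots. Note how this logic is compatible with either single cycle being unknotted, which is precisely the situation your approach cannot handle.
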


\begin{proof}
Suppose $T$ is a $\theta$-curve, so that $(M_1, T_1)$ is a Hopfified $\theta$-curve. Assume, for a contradiction, that $T$ is Brunnian. Let $F$ be the 4-punctured sphere such that surgering $(M,T)$ along $F$ produces $(M_1, T_1)$ as one of the components. Let $\lambda_0$ and $\lambda_1$ be the two cycles of $T$ containing the edge that intersects $F$. Since $T$ is Brunnian, both are unknots. Consequently, $F$ is compressible in both $(M,\lambda_0)$ and $(M,\lambda_1)$. In fact, it must be compressible to the side $W$ containing the vertices of $T$. Since no edge of $T$ contains a local knot, both $(W, \lambda_0 \cap W)$ and $(W, \lambda_1 \cap W)$ are trivial (i.e. rational) tangles. Recall the existence of the disc whose boundary is a cycle of $T$ and that is once-punctured by $e$. Thus, if we glue to $\boundary W = F$ another trivial tangle $(B, \tau)$, we can produce links $(S^3, \tau \cup \lambda_0)$ and $(S^3, \tau \cup \lambda_1)$ having different linking numbers. Consequently, there is no homeomorphism of pairs taking $(W, \lambda_0)$ to $(W, \lambda_1)$ that fixes $F \cap \lambda_0$. Thus, $(M, \lambda_0)$ and $(M,\lambda_1)$ are obtained by attaching the rational tangles $(W, \lambda_0 \cap W)$ and $(W, \lambda_1 \cap W)$ to the prime tangle $(S^3 \setminus W, T \setminus W)$. However, by \cites{BS1, BS2} (see \cite[Theorem 4]{EM}), it is impossible to attach two inequivalent rational tangles to a prime tangle and arrive at $S^3$ in both instances.
\end{proof}

\section{Results about knots and graphs of small net extent}\label{sec:small extent}

The next theorem generalizes \cite[Theorem 7.5]{TT2}.

\begin{theorem}\label{low ne class}
Suppose that $(M,T)$ is a connected, irreducible, noncomposite pair such that every sphere in $M$ separates, $M$ is closed, and $T$ is a knot or genus 2 graph. Suppose that $\mc{H} \in \H(M,T)$ satsifies (LT1), (LT2), (wLT3), (LT4), and (LT5). Then
\begin{enumerate}
\item $\netextent(\mc{H}) = 0$ if and only if $(M,T)$ is a a trivial knot or a (1,0)-curve and $\mc{H}$ is a 2-punctured sphere or unpunctured torus, respectively.
\item $\netextent(\mc{H}) = 1/2$ if and only if $(M,T)$ is a trivial $\theta$-curve or Hopf graph and $\mc{H}$ is a 3-punctured sphere or once-punctured torus, respectively.
\item  For $T$ a genus 2 graph, $\netextent(\mc{H}) = 1$ if and only if $(M,T)$ is either knotted of low complexity, an essential Hopf slinky, or trivial 2-bouquet, and $\mc{H}$ is a 4-punctured sphere, 2-punctured torus, unpunctured genus 2 surface, or standard slinky surface. 
\item For $T$ a knot, $\netextent(\mc{H})= 1$ if and only if $(M,T)$ is either knotted of low complexity, a propeller knot, or an essential Hopf slinky, and $\mc{H}$ is a 4-punctured sphere, 2-punctured torus, unpunctured genus 2 surface, standard propeller surface, or standard slinky surface.
\end{enumerate}
\end{theorem}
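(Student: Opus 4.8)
The plan is to prove Theorem \ref{low ne class} by induction on a complexity measure for $\mc{H}$—say the number of components of $\mc{H}^-$, or equivalently $\netchi(\mc{H})$—using the additivity/crushing machinery of Lemma \ref{crushing calc} to reduce to the case where $\mc{H}$ is connected.  The four statements are not really independent; the $\netextent \le 1/2$ cases (items (1) and (2)) are the base, and the $\netextent = 1$ cases build on them.  For each item the reverse implication is the easy direction: each named graph type was \emph{defined} by exhibiting a specific $\mc{H}$ with the stated net extent, so I only need to verify those $\mc{H}$ satisfy (LT1)--(LT5) when the pair is irreducible and noncomposite, which is routine.  The substance is the forward direction.

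\textbf{Forward direction, connected case.}  First I would treat the case $\mc{H}^- = \nil$, so $H = \mc{H}^+$ is a single vp-bridge surface and $(M,T)\setminus H$ is two vp-compressionbodies $(C_i,T_i)$.  By Lemma \ref{Main Ineq2}, $\Delta(\mc{H}) = \delta(C_1,T_1) + \delta(C_2,T_2)$, and since $\netextent(\mc{H})$ is small, the indices are forced to be small.  Feeding the possible genera and puncture counts of $H$ into the vp-compressionbody classifications of Section \ref{sec:vp-compressionbodies}—Lemma \ref{sphere4} (sphere, $\le 4$ punctures), Lemma \ref{toruscompbody} (torus, $\le 2$ punctures), and Lemma \ref{genus2disjt}/Proposition \ref{genus2disjtprop} (unpunctured genus $2$)—pins down each side.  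Matching the two sides across $H$, and using irreducibility to discard reducible configurations, should yield exactly the connected examples: the trivial knot, $(1,0)$-curve, trivial $\theta$-curve, Hopf graph, $(0,2)$-, $(1,1)$-, $(2,0)$-curve, or trivial $2$-bouquet.  Here the constraint that $\netextent$ is a half-integer (for graphs) or integer (Lemma \ref{knotlem}, for knots) is what separates items (1)--(3)/(4) by value.

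\textbf{Forward direction, disconnected thin surface.}  When $\mc{H}^- \ne \nil$, each thin surface is c-essential by (LT3) (here (wLT3)), and by (LT4)/(LT5) its components are punctured tori or spheres with enough punctures.  Since $(M,T)$ is noncomposite, there is no essential thrice-or-fewer-punctured sphere, so each $F \cpt \mc{H}^-$ must be a twice-punctured torus or pair of once-punctured tori (the propeller/slinky cases) rather than a low-punctured sphere.  This is exactly the situation producing propeller knots and Hopf slinkies: I would surger along the components of $\mc{H}^-$ via Lemma \ref{crushing calc}, apply the connected classification to each piece, and reassemble.  The length/structure constraints in the slinky definition should drop out of tracking $\netchi$ across the crushing, using the inequality $2p+2 \ge \ell(\sigma) \ge 2p-2$.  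The $T^2\times I$ product pieces that can arise are controlled by (LT2), which forbids trivial product compressionbodies with interior negative boundary, forcing the thin tori to genuinely separate distinct summands.

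\textbf{Main obstacle.}  The hardest step is the reassembly in the disconnected case: after crushing, Lemma \ref{crushing calc} tells me each piece again satisfies (LT1)--(LT5) and gives additivity of net extent, but I must rule out ``hidden'' compositeness and verify that the glued-up object is genuinely a propeller knot or essential Hopf slinky rather than some unclassified hybrid—in particular that the $4$-punctured summing spheres are essential and that the two ends of a slinky are of the required types.  This requires carefully distinguishing the knot case (item (4), where propeller knots appear and $\netextent$ is an integer) from the genus $2$ graph case (item (3), where they cannot, since a propeller knot is a knot), and checking that the vertex/valence bookkeeping in the Hopfified/ringlet pieces matches the abstract graph type of $T$.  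I expect this to be where most of the case analysis and the appeal to the Section \ref{sec:vp-compressionbodies} lemmas concentrates.
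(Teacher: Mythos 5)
There is a genuine gap, and it sits exactly where you predicted the difficulty would be: the disconnected case. Your claim that, since $(M,T)$ is noncomposite, ``each $F \cpt \mc{H}^-$ must be a twice-punctured torus or pair of once-punctured tori \dots rather than a low-punctured sphere'' is false. Noncompositeness only excludes essential spheres with \emph{three or fewer} punctures; essential four-punctured spheres are perfectly allowed, and they are precisely the thin surfaces that occur in the hardest part of the classification: the standard slinky surface of a Hopf slinky has $\mc{H}^-$ equal to the 4-punctured spheres coming from the 4-valent vertex sums. Thin punctured tori arise only in the propeller knot case (item (4)). So your dichotomy attaches the two exceptional families to the wrong thin-surface types, and an argument following it would never uncover the slinky structure in items (3) and (4).

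The surger-and-reassemble induction also fails as stated, for two reasons. First, surgering $(M,T)$ along a thin 4-punctured sphere produces a piece with a degree-4 vertex (a Hopfified $\theta$-curve, Hopfified handcuff curve, or Hopf ringlet), which is neither a knot nor a genus 2 graph and hence lies outside the scope of the theorem; you cannot feed it back into the ``connected classification.'' The paper avoids this circularity by analyzing the vp-compressionbody at a leaf of the dual digraph directly (its Cases 1a--1c and 2a--2c, using Lemmas \ref{sphere4}, \ref{toruscompbody}, \ref{genus2disjt} and the Type (VP4) structure from Lemma \ref{The delta lemma}), which yields statement (5): a thin 4-punctured sphere whose surgery splits off a Hopf-type piece on one side and a \emph{2-bouquet} on the other. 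The induction via Lemma \ref{crushing calc} is then applied only to the 2-bouquet side, which is a genus 2 graph and so within scope. Second, Lemma \ref{crushing calc} applies only to punctured \emph{spheres}, so in the propeller case there is nothing to crush; the paper instead runs a partial-order argument on the dual digraph (Case 2c) to show the configuration is the standard propeller surface. Your connected-case outline (matching the Section \ref{sec:vp-compressionbodies} lemmas across a single thick surface, with Lemma \ref{Main Ineq2} bounding the indices) is sound and agrees with the corresponding fragments of the paper's proof, but the substance of the theorem is the disconnected analysis, and that is where the proposal breaks down.
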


\begin{proof}
Suppose that $(M,T)$ is connected and irreducible with $M$ closed and $T$ a knot or genus 2 graph. Suppose that $(M,T)$ is either trivial or prime. In either case, the important point is that there does not exist an essential twice or thrice-punctured sphere in $(M,T)$. Let $\mc{H} \in \H(M,T)$ be locally thin. Set $x = \netchi(\mc{H})$.

One direction of each biconditional is clear. It remains to establish the other directions of the biconditionals. Assume, therefore, that $\netextent(\mc{H}) \leq 1$.

 By (LT5), no component of $\mc{H}$ is an unpunctured sphere. By Theorem \ref{thin props}, if some component of $\mc{H}^+$ is an unpunctured torus, then $(M,T)$ is a trivial knot or (1,0)-curve.  If some component of $\mc{H}^+$ is a 2-punctured sphere, then by Lemma \ref{sphere4} applied to the vp-compressionbodies on either side of the sphere, Conclusion (1) holds. Henceforth, assume that no component of $\mc{H}^+$ is an unpunctured torus or a sphere with two or fewer punctures. Consequently, by Lemma \ref{The delta lemma}, whenever $(C, T_C) \cpt (M,T) \setminus \mc{H}$ has $\delta(C, T_C) = 0$, it must be of Type (VP4). In particular, if $\boundary_- C = \nil$, then $T_C$ contains a vertex of $T$. 
 
Since the dual digraph to $\mc{H}$ is acyclic, it has at least one source and at least one sink. Sources and sinks correspond exactly to the components $(C, T_C) \cpt (M,T)\setminus \mc{H}$ with $\boundary_- C = 0$. We will refer to such vp-compressionbodies as \defn{leaves} of the dual digraph. If a leaf $(C, T_C) \cpt (M,T)\setminus \mc{H}$ of the dual digraph has $\delta(C, T_C) = 0$, we observe that it must contain a vertex. 
 
Recall, 
\[
\Delta(\mc{H}) = 2\netextent(\mc{H}) + \chi(T)
\]
since $M$ is closed. If $T$ is a knot, by Lemma \ref{knotlem}, $\Delta(\mc{H}) \in \{0, 2\}$. If $T$ is a genus 2 graph, then $\Delta(\mc{H}) \in \{0,1\}$. By  Lemma  \ref{Main Ineq2}, 
\[
\sum\limits_{C, T_C} \delta(C, T_C)= \Delta(H) \leq 2.
\]
where the sum is over all components $(C, T_C) \cpt (M,T) \setminus \mc{H}$. Recall that $\delta(C, T_C)$ is a non-negative integer. Consequently, there are at most two such $(C, T_C)$ with $\delta(C, T_C) = 1$. 

Thus, we observe the following:
\begin{itemize}
\item If $\netextent(\mc{H}) = 0$, then $T$ is a knot  or (1,0)-curve and $\mc{H}$ is either a twice-punctured sphere or an unpunctured torus.
\item If $\netextent(\mc{H} )= 1/2$, then $T$ is a genus 2 graph and every vp-compressionbody of $(M,T)\setminus\mc{H}$, including the leaves of the dual digraph, are of Type (VP4). 
\item If $\netextent(\mc{H}) = 1$ and $T$ is a knot, there exist exactly two leaves of the dual digraph, one a sink and one a source, and each leaf has $\delta = 1$. Every other vp-compressionbody of $(M,T)\setminus \mc{H}$ is of Type (VP4).
\item If $\netextent(\mc{H}) = 1$ and $T$ is a graph, exactly one vp-compressionbody of $(M,T)\setminus \mc{H}$ has $\delta = 1$ and all the others (including at least one leaf of the dual digraph) are of Type (VP4).
\end{itemize}

Henceforth, assume that $(M,T)$ is not a trivial knot or (1,0) curve. We start by showing that either one of Conclusions (2), (3), or (4) hold or:
\begin{enumerate}
\item[(5)] There exists a 4-punctured sphere $F \cpt \mc{H}^-$ such that surgery along $F$ results in two connected pairs $(\wihat{M}_1, \wihat{T}_1)$ and $(\wihat{M}_2, \wihat{T}_2)$ such that $(\wihat{M}_1, \wihat{T}_1)$ is either a (1,1)-curve that is a 2-bouquet, a (2,0)-curve that is a 2-bouquet, a Hopf ringlet, a Hopfified $\theta$-curve, or a Hopfified handcuff curve. The pair $(\wihat{M}_2, \wihat{T}_2)$ is a 2-bouquet. Furthermore,  unless $(\wihat{M}_1, \wihat{T}_1)$ is a (1,1)-curve or (2,0)-curve that is a 2-bouquet, $(\mc{H} \cap \wihat{M}_1) \in \H(\wihat{M}_1, \wihat{T}_1)$ is a twice-punctured torus. If it is a (1,1)-curve or (2,0)-curve that is a 2-bouquet, then $(\mc{H} \cap \wihat{M}_1) \in \H(\wihat{M}_1, \wihat{T}_1)$ is either a twice-punctured torus or an unpunctured genus 2 surface.
\end{enumerate}

We will then show that (5) can be applied inductively to construct a Hopf slinky. 

\textbf{Case 1:} $\netextent(\mc{H}) = 1/2$ or $\netextent(\mc{H}) = 1$ and $T$ is a graph.

Let $(C, T_C)$ and $(C', T'_C)$ be distinct components of $(M,T)\setminus \mc{H}$ such that $(C, T_C)$ is a leaf of the dual digraph and $\boundary_+ C' = \boundary_+ C$. Call the shared boundary $H$. By our previous remarks, $\delta(C, T_C) = 0$ and $(C, T_C)$ is of Type (VP4). Thus, the union of the ghost arcs of $\punct{T}_C$ with $\boundary_- \punct{C}$ is a spine of $\punct{C}$. In what follows, we use that and the other properties from (VP4) extensively.

\textbf{Case 1a:} $\punct{T}_C$ contains no ghost arcs.

In this case, $\boundary_- \punct{C}$ is a single sphere, corresponding to a vertex of $T$. In this case, $|H \cap T|$ is the degree of the vertex and so is either 3 or 4. If $(\punct{C}', \punct{T}'_C)$ is a trivial product compressionbody, then $\boundary_- \punct{C}'$ must also correspond to a vertex of $T$, by (LT2). Since $T$ is a genus 2 graph, that would imply that $(M,T)$ is a trivial $\theta$-curve and that $\mc{H} = H$ is a thrice-punctured bridge sphere. This is Conclusion (2). If $(\punct{C}', \punct{T}'_C)$ is not a trivial product compressionbody, by Lemma \ref{sphere4}, the fact that $T$ is a genus 2 graph, and that $\mc{H}^-$ contains no thrice-punctured spheres we see that $\mc{H} = H$ is a 4-punctured sphere, that $(M,T)$ is a 2-bouquet and that $\netextent(\mc{H}) = 1$. Since, $H$ is a 4-punctured sphere, it is trivial. Hence, Conclusion (3) holds.

\textbf{Case 1b:} $\punct{T}_C$ contains exactly one ghost arc $e$.

If $e$ is not a loop of $T$, then $H$ must be a sphere and the degree of each of the endpoints of $e$ is 3. Thus, $H$ is a 4-punctured sphere.  Observe that $T_C$ contains all the vertices of $T$, so $T'_C$ does not contain vertices. Since $\mc{H}^-$ contains no thrice-punctured sphere, by Lemma \ref{sphere4} applied to $(\punct{C}', \punct{T}'_C)$, we see that $\mc{H} = H$, $\boundary_- C' = \nil$, and $T_C$ is the union of two bridge arcs. In which case, $(M,T)$ is either trivial, a Hopf graph, a 2-bridge $\theta$-curve, or a 2-bridge handcuff curve. Observe that $T$ is not a 2-bouquet.

If $e$ is a loop of $T$, then $H$ must be a torus and $T$ is either a 2-bouquet or handcuff curve.  If $T$ is a handcuff curve, then $\punct{T}_C$ consists of $e$ and a single vertical arc. If $T$ is a 2-bouquet, $\punct{T}_C$ consists of $e$ and two vertical arcs. Thus, $|H \cap T| = 1$ or $|H\cap T| = 2$, respectively. Any torus component of $\boundary_- \punct{C}'$ must lie in $\mc{H}^-$ and thus, by (LT2), $(\punct{C}', \punct{T}'_C)$ is not a trivial product-compressionbody. Any thrice-punctured sphere component of $\boundary_- \punct{C}'$ must correspond to a vertex of $T$, as $\mc{H}^-$ contains no thrice-punctured spheres. Consequently, if $T$ is a handcuff curve, by Lemma \ref{toruscompbody}, $(M,T)$ is a Hopf graph and $\mc{H} = H$ is a once-punctured torus. Suppose that $T$ is a 2-bouquet. Then $T'_C$ does not contain any vertices of $T$, and so by Lemma \ref{toruscompbody}, $(\punct{C}', \punct{T}'_C)$ is either a $(T^2 \times I, \text{ bridge arc})$ or $C' = \punct{C}'$ is the result of removing an open 3-ball from a solid torus and $T'_C = \punct{T}'_C$ is the union of a ghost arc and two vertical arcs. 

With the first possibility, observe that $\boundary_- C'$ separates $M$ and that $T$ is contained entirely to one side. Furthermore, $\delta(C', T'_C) = 1$ (by direct calculation) and so there exists a leaf $(D, T_D)  \neq (C, T_C)$ of the dual digraph disjoint from $T$. But this leaf must have $\delta(D, T_C) = 0$ since $(C', T'_C)$ is the unique component of $(M,T) \setminus \mc{H}$ with $\delta = 1$. By our previous remarks, $T_D$ must contain a vertex of $T$. But this contradicts the fact that $T_C \cup T'_C = T$. Thus, this case cannot occur.

Suppose, therefore, that $C = C'$ is the result of removing an open ball from a solid torus. Let $F = \boundary_- C'$. Observe that $F \cpt \mc{H}^-$ is an essential 4-punctured sphere. Since $\boundary_- C = \nil$, the sphere $F$ is separating. Let $(\wihat{M}_1, \wihat{T}_1)$ and $\wihat{M}_2, \wihat{T}_2)$ be the result of surgering $(M,T)$ along $F$ with $\wihat{T}_1$ the graph containing the vertex of $T$. Let $\mc{H}_i = \mc{H} \cap \wihat{M}_i$. Notice that $\wihat{T}_1$ is a Hopf ringlet and that $\wihat{T}_2$ is a 2-bouquet. Thus (5) holds.

\textbf{Case 1c:} $\punct{T}_C$ contains two distinct ghost arcs $e_1$ and $e_2$.

If $H \cap T = \nil$, then by (VP4), $H$ is a genus 2 surface and $T = T_C$. If $\boundary_- C' \neq \nil$, then there is another leaf $(D, T_D) \neq (C, T_C)$ of the dual digraph. Since $T'_C = \nil$, by (LT2), $(C', T'_C)$ is not of Type (VP4) and so $\delta(C', T'_C) = 1$. Consequently, $\delta(D, T_D) = 0$. This implies that $T_D$ contains a vertex of $T$, contradicting the fact that $T = T_C$. Thus,  $\boundary_- C' = \nil$. In this case, $H = \mc{H}$ is a genus 2 Heegaard surface for the exterior of $T$ and so $(M,T)$ is either a trivial $\theta$-curve, a Hopf graph, or is knotted of low complexity. This implies that Conclusion (2) or (3) holds.

Since $(C, T_C)$ is of Type (VP4), if $e_1$ and $e_2$ are both loops, then $T = T_C$, and $H \cap T = \nil$, a possibility we have already considered. We suppose, therefore, that $e_2$, say, is not a loop. Thus, $H$ has genus 1 and either $e_1 \cup e_2$ is a core loop of the solid torus $C$ or $e_1$ is a loop based at a vertex $v$ of $T$ and $e_2$ is a ghost arc joining $v$ to another vertex $w$ of $T$ (and $T_C$). In the former case $T$ is a $\theta$-curve and in the latter case, $T$ is a handcuff curve. In either case, we have $|H \cap T| = 2$ and $T_C$ contains both vertices of $T$. We apply Lemma \ref{toruscompbody} to $(\punct{C}', \punct{T}')$. Since $\mc{H}^-$ contains no thrice-punctured spheres, no component of $\boundary_- \punct{C}'$ is a thrice-punctured sphere. By (LT2), $(C', T'_C)$ is not a trivial product compressionbody. If $\boundary_- C'$ is a torus, then as in the previous cases, we would find that $T$ was disconnected, a contradiction. Thus, by Lemma \ref{toruscompbody}, $\boundary_- C'$ is a 4-punctured sphere $F \cpt \mc{H}^-$ and $T'_C = \punct{T}'_C$ consists of two vertical arcs and a ghost arc. Since $\boundary_- C = \nil$, the sphere $F$ is separating. Surgering $(M,T)$ along $F$ results in two connected pairs $(\wihat{M}_1, \wihat{T}_1)$ and $(\wihat{M}_2, \wihat{T}_2)$. Choosing the notation so that the vertices of $T$ lie in $\wihat{T}_1$, we see that the pair $(\wihat{M}_1, \wihat{T}_1)$ is either a Hopfified $\theta$-curve or a Hopfified handcuff curve. The pair $(\wihat{M}_2, \wihat{T}_2)$ is a 2-bouquet. Thus (5) holds.

\textbf{Case 2:} $\netextent(M,T) = 1$ and $T$ is a knot.

As we have remarked, in this case, there are exactly two leaves $(C, T_C)$ and $(D, T_D)$ of the dual digraph and they both have $\delta = 1$. Since $\boundary_- C = \nil$, 
\[
1 = \delta(C, T_C) = \extent(\boundary_+ C) = \g(\boundary_+ C) - 1 + b 
\]
where $b$ is the number of components (necessarily all bridge arcs) of $T_C$. Thus, $H = \boundary_+ C$ is either a sphere with 4 punctures, a torus with two punctures, or an unpunctured genus 2 surface. Let $(C', T'_C) \cpt (M,T)\setminus \mc{H}$ be the other vp-compressionbody with $\boundary_+ C' = H$. Recall that $\mc{H}^-$ contains no thrice-punctured spheres and $T$ has no vertices. Also, by (LT2), if $(\punct{C}', \punct{T}'_C)$ is a trivial product compressionbody, then $\boundary_- \punct{C}'$ corresponds to a vertex of $T$. 

\textbf{Case 2a:} $H$ is a sphere.

By Lemma \ref{sphere4}, $H = \mc{H}$, $M = S^3$, and $T$ is either trivial or 2-bridge.

\textbf{Case 2b:} $H$ is a twice-punctured torus. 

We apply Lemma \ref{toruscompbody} to $(C', T'_C)$. Since $T$ is a knot and $\mc{H}^-$ contains no thrice-punctured spheres, one of the following occurs:
\begin{enumerate}
\item[(a)] $(C', T'_C) = (D, T_D)$ is a solid torus with a single bridge arc;
\item[(b)] $C'$ is homeomorphic to $H \times I$ and $T'_C$ is a single bridge arc;
\item[(c)] $C'$ is the result of removing an open ball from a solid torus and $\punct{T}'_C$ is the union of a ghost arc and two vertical arcs.
\end{enumerate}

If (a) holds, then $(M,T)$ is either trivial, a (1,0)-knot, a 2-bridge knot, or a (1,1) knot and $\mc{H} = H$ is a twice-punctured torus. This is Conclusion (4). If (b) holds, then $(C', T'_C) \neq (D, T_D)$ but $\delta(C', T'_C) = 1$, a contradiction. If (c) holds, let $F = \boundary_- C'$ and observe it is a 4-punctured sphere and $F \cpt \mc{H}^-$. Since $\boundary_- C = \nil$, $F$ separates $M$. Thus, surgering $(M,T)$ along $F$ produces two connected pairs $(\wihat{M}_1, \wihat{T}_1)$ and $(\wihat{M}_2, \wihat{T}_2)$. Since $T$ is a knot, both pairs are 2-bouquets. We may choose the notation so that $H \subset \wihat{M}_1$. Observe that $(\wihat{M}_1, \wihat{T}_1)$ is either a trivial 2-bouquet or a (1,1)-curve. In fact, since $F$ is c-essential in $(M,T)$, it cannot be a trivial 2-bouquet. Thus, (5) holds. 

\textbf{Case 2c:} $H$ is an unpunctured genus 2 surface.

Since $\delta(C, T_C) = 1$, $T_C = \nil$. We apply Lemma \ref{genus2disjt} to $(C', T'_C)$. If $\boundary_- C' = \nil$, then $(C', T'_C) = (D, T_D)$ and $H$ is a genus 2 Heegaard surface for $M\setminus T$. If $\boundary_- C'$ is a single 4-punctured sphere, as before we see that (5) holds with $(\wihat{M}_1, \wihat{T}_1)$ being a (1,1)-curve or (2,0)-curve that is a 2-bouquet. Suppose, therefore, that $\boundary_- C' = \boundary_- \punct{C}'$ is either one or two tori and that $T'_C$ is a ghost arc. Let $F$ and $F'$ be the components of $\boundary_- C'$ (possibly $F = F'$). 

Without loss of generality, we may assume that $H$ is oriented into $C$, so that $(C, T_C)$ is the unique sink of the dual digraph. The orientations on the edges of the dual digraph induce a partial order on the vp-compressionbodies of $(M,T)\setminus \mc{H}$ and we write $(E', T'_E) < (E, T_E)$ if there is a non-constant path, following the orientations of the edges of the dual digraph, in the dual digraph from $(E', T'_E)$ to $(E, T_E)$. The vp-compressionbody $(C, T_C)$ is the unique maximal element under this partial order and every $(D', T'_D) \cpt (M,T) \setminus \mc{H}$ has the property that $(D', T'_D) \leq (C, T_C)$. If $(D', T'_D) \neq (C, T_C)$, then also $(D', T'_D) \leq (C', T'_C)$. Let $(E, T_E)$ and $(E', T'_E)$ be the vp-compressionbodies that are distinct from $(C', T'_C)$ and which contain $F$ and $F'$ respectively. We have $(E, T_E), (E', T'_E) < (C', T'_C)$ and if $(D', T'_D) \cpt (M,T)\setminus \mc{H}$ is not either of $(C, T_C)$ or $(C', T'_C)$ then $(D', T'_D) \leq (E, T_E)$ or $(D', T'_D) \leq (E', T'_E)$. If $E \neq E'$, then $(E, T_E)$ and $(E', T'_E)$ are incomparable in the partial order.

Suppose that $\boundary_- E$ contains a component $F''$ which is not $F$ or $F'$. Let $(D', T'_D) \neq (E, T_E)$ be the vp-compressionbody with $F'' \cpt \boundary_- D'$. It cannot be $(C', T'_C)$. Thus, $(E, T_E) < (D', T'_D) < (E', T'_E)$. Likewise, if $\boundary_- E'$ contains a component which is not $F$ or $F'$, then $(E', T'_E) < (E, T_E)$. Consequently, either $(E, T_E)$ or $(E', T'_E)$ has its entire negative boundary contained in $F \cup F'$. Without loss of generality, suppose it is $(E, T_E)$. As $(E, T_E)$ is of Type (VP4), by (LT2), any component of $T_E$ incident to $F \cup F'$ must be a ghost arc $\psi$. Since $|(F \cup F') \cap T| = 2$ and $\boundary_- E \subset (F \cup F')$, we must have $\boundary_- E = F \cup F'$. Consequently, $(E, T_E) = (E', T'_E)$. Furthermore, the endpoints of $\psi$ are precisely the punctures of $F \cup F'$. Thus, $\psi \cup T'_C = T$. We see then that $\boundary_+ E$ is an unpunctured genus 2 surface. Since, apart from $(C, T_C)$ and $(D, T_D)$, every vp-compressionbody of $(M,T)\setminus \mc{H}$ is of Type (VP4), $\boundary_+ E = \boundary_+ D$ and so $(M,T)$ is a propeller knot and $\mc{H}$ is the standard propeller surface. This concludes the proof of Case 2.

It remains to show that (5) implies that $(M,T)$ is an essential Hopf slinky. We apply (5) inductively. Let $F_1 \cpt \mc{H}^-$ be the 4-punctured sphere given by (5). Surgering along $F_1$ produces two connected pairs $(M_1, T_1) = (\wihat{M}_1, \wihat{T}_1)$ and $(\wihat{M}_2, \wihat{T}_2)$ with $(M_1, T_1)$ either a Hopfified $\theta$-curve, Hopfified handcuff curve, Hopf ringlet, or 2-bouquet that is a (1,1)-curve or (2,0)-curve. The surface $H = \mc{H} \cap \wihat{M}_1$ is a twice-punctured torus in $\H(M_1, T_1)$. By Lemma \ref{crushing calc}, $\mc{H}_2 = \mc{H} \cap \wihat{M}_2$ are multiple vp-bridge surfaces continuing to satisfy (LT1), (LT2), (wLT3), (LT4), (LT5) and 
\[\begin{array}{rcl}
\netchi(\mc{H}_2) &=& \netchi(\mc{H}) - 2, \text{ and} \\
\netextent(\mc{H}_2) &=&  1\\
\end{array}
\]
Apply our previous work to the pair $(\wihat{M}_2, \wihat{T}_2)$ and the surface $\mc{H}_2$. As $F_1$ is essential, we know that $(\wihat{M}_2, \wihat{T}_2)$ is not trivial. As $\wihat{T}_2$ is a 2-bouquet, we conclude that either $(\wihat{M}_2, \wihat{T}_2)$ is a $(1,1)$-curve or $(2,0)$-curve or (5) holds for it. If it is a (1,1)-curve or (2,0) curve, then $(M,T)$ is an essential Hopf slinky of length 2. If (5) holds, we decompose it along another 4-punctured sphere $F_2$ into $(M_2, T_2)$ and $(\wihat{M}_2, \wihat{T}_2)$ such that $\mc{H}_2 \cap M_2$ is a twice-punctured torus and $(M_2, T_2)$ is a Hopf slinky (since $T_2$ was a 2-bouquet). Continuing on in this manner, we deduce that $(M,T)$ is an essential Hopf slinky and that $\mc{H}$ is the standard slinky surface.
\end{proof}

\section{Lower bounds on tunnel number and the bridge number for composite genus 2 graphs}\label{sec:lower bound}

We can now prove our lower bounds on bridge number and tunnel number. We begin with a very general result. As we previously discussed results for composite knots in \cite{TT2}, we focus on genus 2 graphs here. (Although, we could extract slightly more information even for knots). We begin by establishing notation that will be useful in the remainder of the paper. Let $(M,T)$ be an irreducible, connected pair such that $T$ is a genus 2 graph and every sphere in $M$ is separating. Suppose also that we have a prime factorization with factors $(\wihat{M}_1, \wihat{T}_1), \hdots, (\wihat{M}_n, \wihat{T}_n)$. (Recall that by Theorem \ref{unique roots} these factors are independent of the particular prime factorization.) Additionally, suppose that for each $i \in \{1, \hdots, n\}$, we have a realizable $x_i \in\Z$. Set $NE_g(i)$ and $NE_k(i)$ to be the number of factors for which $x_i = i$ and for which $\wihat{T}_i$ is a genus 2 graph or knot (respectively).

\begin{theorem}\label{fundamental theorem}
Suppose that $(M,T)$ is an irreducible composite pair such that every sphere in $M$ separates and $T$ is a genus 2 graph. Let $x$ be realizable for $(M,T)$. Then there exists a prime decomposition of $(M,T)$, such that for each of the $n$ factors $(\wihat{M}_i, \wihat{T}_i)$, there exists an admissible $x_i \geq -2$  such that
\[
x_1 + \cdots + x_n \leq x - 2(n-1)
\]
and
\[
\netextent_x(M,T) = \frac{1}{2} + \sum\limits_{i \geq 0} \big((i - \frac{1}{2}) NE_g(i)+ iNE_k(i)\big).
\]
\end{theorem}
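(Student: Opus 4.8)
The plan is to read the identity off from the Additivity clause of Theorem~\ref{Add Thm}, converting its conclusion into the stated shape by a single Euler characteristic bookkeeping step. To begin, I would apply the Additivity part of Theorem~\ref{Add Thm} to $(M,T)$; its hypotheses (connected, irreducible, composite, every sphere separating) are exactly ours. This yields a prime factorization into $(\wihat{M}_1,\wihat{T}_1),\dots,(\wihat{M}_n,\wihat{T}_n)$ together with realizable $x_i$ satisfying
\[
x_1 + \cdots + x_n \leq x - 2(n-1)
\]
and
\[
\netextent_x(M,T) = -\frac{p_3}{2} + \sum_{i=1}^n \netextent_{x_i}(\wihat{M}_i,\wihat{T}_i),
\]
where $p_3$ counts the thrice-punctured summing spheres of the decomposition. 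This already gives the first displayed relation, and since each $x_i$ is realizable we have $x_i \geq 2\g(\wihat{M}_i) - 2 \geq -2$, so each is admissible with $x_i \geq -2$. By Theorem~\ref{unique roots} the factors, and hence the counts $NE_g(i)$ and $NE_k(i)$, do not depend on which prime factorization is used.

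The heart of the argument is to evaluate $p_3$. Every factor $\wihat{T}_i$ is a knot or a genus 2 graph, and by definition a prime factorization is built only from connected sums (along twice-punctured spheres) and trivalent vertex sums (along thrice-punctured spheres). A connected sum, performed at interior edge points, leaves the total Euler characteristic unchanged, whereas a trivalent vertex sum deletes two trivalent vertices and splices the three pairs of resulting half-edges, raising the Euler characteristic of the union by exactly $1$ (for instance, two $\theta$-curves of Euler characteristic $-1$ combine to a single $\theta$-curve of Euler characteristic $-1$). Reassembling $T$ from its factors therefore gives
\[
\chi(T) = \sum_{i=1}^n \chi(\wihat{T}_i) + p_3 = -n_g + p_3,
\]
where $n_g = \sum_{i \geq 0} NE_g(i)$ is the number of genus 2 factors (each of Euler characteristic $-1$) and the knot factors contribute $0$. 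Since $T$ is a genus 2 graph, $\chi(T) = -1$, so $p_3 = n_g - 1$. Equivalently, connect-summing two genus 2 graphs would produce a graph of Euler characteristic $-2$, so distinct genus 2 factors can be joined only by trivalent vertex sums, and these thrice-punctured spheres form the edges of a tree on the $n_g$ genus 2 factors.

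Finally I would substitute and regroup. Using $-\tfrac{p_3}{2} = -\tfrac{n_g-1}{2} = \tfrac12 - \tfrac{n_g}{2}$ and splitting the additivity sum over the genus 2 and the knot factors,
\[
\netextent_x(M,T) = \frac12 + \sum_{\wihat{T}_i \text{ genus 2}} \Big( \netextent_{x_i}(\wihat{M}_i,\wihat{T}_i) - \tfrac12 \Big) + \sum_{\wihat{T}_i \text{ knot}} \netextent_{x_i}(\wihat{M}_i,\wihat{T}_i),
\]
since the term $-\tfrac{n_g}{2}$ is precisely $-\tfrac12$ for each of the $n_g$ genus 2 factors. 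Collecting the factors according to the value $i = \netextent_{x_i}(\wihat{M}_i,\wihat{T}_i)$ tabulated by $NE_g(i)$ and $NE_k(i)$ converts the two sums into $\sum_{i\geq 0}\big((i-\tfrac12)NE_g(i) + i\,NE_k(i)\big)$, which is the desired formula. The step demanding the most care is the computation $p_3 = n_g - 1$: one must confirm that these thrice-punctured spheres are counted by exactly the $p_3$ appearing in Theorem~\ref{Add Thm}, and that the Euler characteristic tally is unaffected by the (at most one) trivial genus 2 factor or by any $2$-bouquet or Hopf-slinky factors, each of which still enters the count as a single genus 2 factor of Euler characteristic $-1$.
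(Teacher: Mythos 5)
Your proposal is correct and follows essentially the same route as the paper: apply the Additivity part of Theorem~\ref{Add Thm}, show that the number $p_3$ of thrice-punctured summing spheres equals one less than the number of genus 2 factors, and then absorb $-p_3/2$ into a $-\tfrac12$ correction for each genus 2 factor before stratifying by net extent values. The only difference is cosmetic: you establish $p_3 = n_g - 1$ by Euler characteristic bookkeeping, whereas the paper argues it by a short case analysis (if a 2-bouquet or trivial $\theta$-curve appears then $p_3 = 0$ and all other factors are knots; otherwise $p_3 = m-1$ because $\theta$-curves and handcuff curves each have exactly two trivalent vertices) --- both are sound and give the same identity.
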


\begin{remark}\label{rem: fundamental theorem}
By Lemma \ref{Main Ineq}, each factor $(\wihat{M}_i, \wihat{T}_i)$ contributes a non-negative integer or half integer to the sum in Theorem \ref{fundamental theorem}. By Theorem \ref{low ne class}, it contributes zero if and only if it is either a $(1,0)$-curve, a trivial $\theta$-curve, or a Hopf graph. Also, by Lemma \ref{knotlem}, a knot factor always contributes an integer.
\end{remark}

\begin{proof}
The Additivity Theorem (Theorem \ref{Add Thm}) gives a prime decomposition of $(M,T)$ into $(\wihat{M}_i, \wihat{T}_i)$ for $i = 1, \hdots n$ and integers $x_i$ such that each $x_i$ is realizable for $(\wihat{M}_i, \wihat{T}_i)$,
\[
x_1 + \cdots + x_n \leq x - 2(n-1),
\] 
and
\[
\netextent(M,T) = -p_3/2 + \sum\limits_{i=1}^n \netextent_{x_i}(\wihat{M}_i, \wihat{T}_i)
\]
where $p_3$ is the number of thrice-punctured spheres in the decomposition. 

Each factor of the decomposition is either a genus 2 graph or a knot. Let $m$ be the number of factors that are genus 2 graphs. Since $T$ is a genus 2 graph, $m \geq 1$. If there is a 2-bouquet or trivial $\theta$-graph in the decomposition, then $p_3 = 0$ and all the other factors are knots. If $p_3 \neq 0$, then it must be equal to $m-1$ since $\theta$-curves and handcuff curves each have precisely two vertices, each of degree 3. Thus, in all cases, $p_3 = m-1$. Let $I_k$ be the set of indices $i$ such that $\wihat{T}_i$ is a knot and let $I_g$ be the set of indices $i$ such that $\wihat{T}_i$ is a genus 2 graph.  Consequently,

\[
\netextent(M,T) =\frac{1}{2} + \sum\limits_{i \in I_g}\big(\netextent_{x_i}(\wihat{M}_i, \wihat{T}_i) - \frac{1}{2}\big) + \sum\limits_{i \in I_k}\netextent_{x_i}(\wihat{M}_i, \wihat{T}_i).
\]
Stratifying by the values of net extent, we have
\[
\netextent(M,T) \geq  \frac{1}{2} + \sum\limits_{i \geq 0} NE_g(i)(i - \frac{1}{2}) + NE_k(i)i.
\]
\end{proof}

We can now prove our first result on tunnel number. 

\begin{theorem}\label{tunnel result 1}
Suppose that $(M,T)$ is an irreducible composite pair such that every sphere in $M$ separates and $T$ is a genus 2 graph. Then
\[
\t(M,T) \geq \frac{m - 1}{2} + k
\]
where $m$ is the number of factors in a prime factorization that are genus 2 graphs which are not the trivial $\theta$-curves or Hopf graphs and $k$ is the number of factors that are knots which are not $(1,0)$-curves.
\end{theorem}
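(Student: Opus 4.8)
The plan is to reduce the tunnel number bound to a lower bound on $\netextent_\infty(M,T)$, apply the additivity formula of Theorem \ref{fundamental theorem}, and then bound each factor's contribution below using the classification packaged in Remark \ref{rem: fundamental theorem}.

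First I would convert tunnel number into net extent. Since $T$ is a genus 2 graph, $\chi(T) = -1$, so $\t(M,T) = \g(M\setminus T) - 2$. Let $H$ be a minimal genus Heegaard surface for $M\setminus T$; as noted in Section \ref{sec: notationterm} (and confirmed by Proposition \ref{genus2disjtprop}), after orienting, $H$ is an element of $\H(M,T)$ disjoint from $T$ with $\g(H) = \g(M\setminus T)$. Since it is disjoint from $T$ we have $\mc{H}^- = \nil$, whence $\netextent(H) = \extent(H) = \tfrac12(-\chi(H)) = \g(H) - 1$ and $\netchi(H) = 2\g(H) - 2$. By the definition of $\netextent_x$, taking $x = \netchi(H)$ gives $\netextent_\infty(M,T) \leq \netextent(H) = \g(M\setminus T) - 1$. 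Therefore $\t(M,T) = \g(M\setminus T) - 2 \geq \netextent_\infty(M,T) - 1$, and it suffices to show
\[
\netextent_\infty(M,T) \geq \frac{1}{2} + \frac{m}{2} + k.
\]

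Next I would pick a realizable $x$ large enough that $\netextent_x(M,T) = \netextent_\infty(M,T)$ (such $x$ exists since the sequence $\netextent_x$ is eventually constant) and apply Theorem \ref{fundamental theorem} to it, producing a prime decomposition with factors $(\wihat{M}_i, \wihat{T}_i)$ and the identity
\[
\netextent_\infty(M,T) = \frac{1}{2} + \sum_{i \geq 0}\big((i - \tfrac12)NE_g(i) + i\,NE_k(i)\big).
\]
By Theorem \ref{unique roots} the factors, and hence the counts $m$ and $k$, do not depend on the decomposition, so I may identify the $m$ and $k$ of the statement with the factors appearing here. The remaining work is the accounting supplied by Remark \ref{rem: fundamental theorem}: each genus 2 factor contributes the non-negative half-integer $\netextent_{x_i} - \tfrac12$, which vanishes exactly when the factor is a trivial $\theta$-curve or Hopf graph and is otherwise at least $\tfrac12$; each knot factor contributes the non-negative integer $\netextent_{x_i}$, which vanishes exactly when the factor is a $(1,0)$-curve and is otherwise at least $1$. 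Summing the contributions of the $m$ remaining genus 2 factors and the $k$ remaining knot factors against the baseline $\tfrac12$ yields $\netextent_\infty(M,T) \geq \tfrac12 + \tfrac{m}{2} + k$, and subtracting $1$ gives the desired inequality.

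The genuinely difficult content is already carried by the two inputs I am invoking: the additivity of net extent (Theorem \ref{fundamental theorem}), and the classification of pairs of net extent at most $1$ (Theorem \ref{low ne class}), which is precisely what isolates the zero-contribution classes. Within this argument the only point requiring care is the parity bookkeeping: a genus 2 factor's contribution is a half-integer, so its least positive value is $\tfrac12$, while a knot factor's is an integer, so its least positive value is $1$; matching the exclusions (trivial $\theta$-curves, Hopf graphs, and $(1,0)$-curves) exactly to the ``contributes zero'' cases is what makes the surviving factors line up with $m$ and $k$. Finally, the single $-1$ lost in converting from $\g(M\setminus T)$ to $\netextent_\infty$ is exactly what turns the baseline $\tfrac12$ together with $\tfrac{m}{2}$ into the $\tfrac{m-1}{2}$ of the statement.
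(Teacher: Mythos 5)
Your proof is correct and follows essentially the same route as the paper's: bound net extent above by $\t(M,T)+1$ using a minimal genus Heegaard surface for $M\setminus T$, apply the additivity identity of Theorem \ref{fundamental theorem}, and then use Remark \ref{rem: fundamental theorem} (i.e.\ Theorem \ref{low ne class} plus the integrality/half-integrality observations) to match the zero-contribution factors exactly with the excluded classes. The only cosmetic difference is that you route the argument through $\netextent_\infty(M,T)$ at a large realizable $x$, whereas the paper applies Theorem \ref{fundamental theorem} directly at $x = 2\t(M,T)+2$; the resulting inequality chain is identical.
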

\begin{proof}
By the definition of tunnel number, there exists a connected $H \in \H(M,T)$ such that the genus of $H$ is $\t(M,T) + 2$ and $H$ is disjoint from $T$. Set $x = 2\t(M,T) + 2$. Observe that $\netextent(H) = x/2$. Thus, by Theorem \ref{fundamental theorem}, there exists a prime factorization of $(M,T)$ such that 
\[
\t(M,T) + 1 \geq \netextent_x(M,T) \geq \frac{1}{2} + \sum\limits_{i \geq 0} \big((i - \frac{1}{2}) NE_g(i)+ iNE_k(i)\big).
\]

By Remark \ref{rem: fundamental theorem}, 
\[
\t(M,T) \geq - \frac{1}{2} + \frac{m}{2} + k, 
\]
as desired.
\end{proof}

Similarly, for bridge number we have:
\begin{theorem}\label{bridge result 1}
Suppose that $(S^3,T)$ is an irreducible composite pair and that $T$ is a genus 2 graph. Then
\[
\b(T) \geq \frac{m+3}{2} + k 
\]
where $m$ is the number of factors that are genus 2 graphs which are not the trivial $\theta$-curve and $k$ is the number of factors that are knots. Furthermore, if equality holds then every factor in a prime factorization is a $(0,2)$-curve, trivial $\theta$-curve or trivial 2-bouquet.
\end{theorem}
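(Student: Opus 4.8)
The plan is to run the same machine as in Theorem \ref{tunnel result 1}, but to feed in a minimal bridge sphere in place of a Heegaard surface disjoint from $T$, and then to exploit the fact that a bridge sphere has the smallest possible net Euler characteristic. Let $H$ be a bridge sphere realizing $\b(T)$, so $H$ is a connected sphere with $|H\cap T|=2\b(T)$. Regarding $H$ as an oriented vp-bridge surface with $\mc{H}^-=\nil$, we have $\extent(H)=\netextent(H)=\b(T)-1$ and $\netchi(H)=-\chi(H)=-2$. Since $2\g(S^3)-2=-2$, the value $x=-2$ is realizable and $\netextent_{-2}(S^3,T)\le \netextent(H)=\b(T)-1$.

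First I would apply Theorem \ref{fundamental theorem} with $x=-2$. This produces a prime factorization with factors $(\wihat{M}_i,\wihat{T}_i)$ and realizable integers $x_i\ge -2$ satisfying $x_1+\cdots+x_n\le -2-2(n-1)=-2n$. Because each $x_i\ge -2$, the sum is also $\ge -2n$, so every $x_i=-2$; in particular every factor lives in $S^3$ and is measured by $\netextent_{-2}$. The crucial difference from the tunnel number argument now appears: the constraint $\netchi\le -2$ kills the cheap net extent $\frac{1}{2}$ realizations. Indeed, by Theorem \ref{low ne class}(2) a locally thin surface of net extent $\frac{1}{2}$ for a genus $2$ graph is either a thrice-punctured sphere (trivial $\theta$-curve, $\netchi=-2$) or a once-punctured torus (Hopf graph, $\netchi=0$). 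Since thinning never increases $\netchi$, a factor with $\netextent_{-2}=\frac{1}{2}$ must thin to a locally thin surface with $\netchi\le -2$, which forces the thrice-punctured sphere; hence every genus $2$ factor other than the trivial $\theta$-curve has $\netextent_{-2}\ge 1$. Here I use that the net extent of a closed genus $2$ graph pair lies in $\frac{1}{2}+\Z$ by the parity computation behind Corollary \ref{Main Ineq}, so that ``$>\frac{1}{2}$'' means ``$\ge 1$''. Knot factors are nontrivial and, in $S^3$, are not $(1,0)$-curves, so by Theorem \ref{low ne class}(1) and Lemma \ref{knotlem} each contributes an integer $\ge 1$.

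Feeding these bounds into the formula of Theorem \ref{fundamental theorem}, a trivial $\theta$-factor contributes $\netextent_{-2}-\frac{1}{2}=0$, each remaining genus $2$ factor contributes $\ge \frac{1}{2}$, and each knot factor contributes $\ge 1$, so that $\b(T)-1\ge \netextent_{-2}(S^3,T)\ge \frac{1}{2}+\frac{m}{2}+k$, which is exactly $\b(T)\ge \frac{m+3}{2}+k$. Note that, unlike the tunnel number theorem (which effectively uses $\netextent_\infty$ and where Remark \ref{rem: fundamental theorem} lets Hopf graphs contribute $0$), Hopf graphs are now counted in $m$: their only net-extent-$\frac{1}{2}$ surface has $\netchi=0$ and is excluded at $\netchi\le -2$, so they behave like the genuinely knotted genus $2$ graphs.

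For the equality statement I would trace every inequality back. Equality forces $\netextent_{-2}(S^3,T)=\frac{1}{2}+\frac{m}{2}+k$ together with minimality of each factor's contribution, so every non-trivial-$\theta$ genus $2$ factor, and every knot factor, has $\netextent_{-2}=1$. For each such factor (prime or trivial, hence noncomposite, in a closed $S^3$) I would thin a realizing surface to a locally thin one, again using that $\netchi$ cannot increase, and apply the net-extent-$1$ classification of Theorem \ref{low ne class}: among the listed model surfaces only the $4$-punctured sphere has $\netchi\le -2$, whereas the twice-punctured torus ($(1,1)$-curve), the unpunctured genus $2$ surface ($(2,0)$-curve), the standard propeller surface, and the standard slinky surface all have $\netchi\ge 0$. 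Thus each non-trivial-$\theta$ genus $2$ factor is realized by a $4$-punctured sphere, i.e.\ is a $(0,2)$-curve or a trivial $2$-bouquet, yielding the stated conclusion; a knot factor is likewise forced to a $4$-punctured sphere, i.e.\ a $2$-bridge knot, the knot-theoretic analogue of a $(0,2)$-curve. The main obstacle is exactly this last step: I must verify that the only way to achieve $\netextent=1$ under $\netchi\le -2$ is the $4$-punctured sphere, which rests on the monotonicity of $\netchi$ under thinning combined with the \emph{complete} list in Theorem \ref{low ne class}, and I must confirm (via Lemma \ref{crushing calc}) that surgering $(S^3,T)$ along the thin twice- and thrice-punctured summing spheres preserves the hypotheses (LT1), (LT2), (wLT3), (LT4), (LT5) so that the factorwise classification is legitimate. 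Reconciling the knot-factor case with the phrasing of the conclusion — recording that any knot factor present at equality is a $2$-bridge knot tied in an edge — is the delicate bookkeeping point.
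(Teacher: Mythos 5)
Your proposal is correct and follows essentially the same route as the paper's proof: start from a minimal bridge sphere, apply Theorem \ref{fundamental theorem} with $x=-2$ to force every $x_i=-2$, rule out the cheap net-extent-$0$ and net-extent-$\frac{1}{2}$ contributions so that $m'=m$ and $k'=k$, and settle the equality case by noting that among the model surfaces in Theorem \ref{low ne class} only the $4$-punctured sphere has $\netchi \le -2$. The only deviations are cosmetic: you exclude Hopf graphs via $\netchi$-monotonicity under thinning combined with the surface types in Theorem \ref{low ne class}(2), whereas the paper counts punctures of a sphere meeting a Hopf graph directly (both arguments work), and your parity claim should say that the net extent of a genus $2$ pair is a half-integer at least $\frac{1}{2}$ (it lies in $\frac{1}{2}\Z$, not in $\frac{1}{2}+\Z$; for instance $(0,2)$-curves have net extent $1$), which is all your argument actually uses.
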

\begin{proof}
The proof is nearly identical to that of Theorem \ref{tunnel result 1}, except that we start with a minimal bridge sphere $H$ for $(M,T)$. Set $x = -2 = -\chi(H)$ and observe that
\[
\b(M,T) - 1= \netextent(H) \geq \netextent_{-2}(M,T).
\]
By Theorem \ref{fundamental theorem}, for each factor $(\wihat{M}_i, \wihat{T}_i)$, there exists an even integer $x_i \geq -2$ such that 
\begin{equation}\label{xsum}
x_1 + \cdots + x_n \leq x - 2(n-1) = -2n
\end{equation}
and
\[
\b(M,T) - 1 \geq  \frac{1}{2} + \sum\limits_{i \geq 0} \big((i - \frac{1}{2}) NE_g(i)+ iNE_k(i)\big) \geq \frac{1}{2} + \frac{m'}{2} + k',
\]
The number $m'$ is the number of genus 2 graph factors $(\wihat{M}_i, \wihat{T}_i)$ for which $\netextent_{x_i}(\wihat{M}_i, \wihat{T}_i) > 1/2$ and $k'$ is the number of knot factors $(\wihat{M}_i, \wihat{T}_i)$ for which $\netextent_{x_i}(\wihat{M}_i, \wihat{T}_i) > 0$. Since $M = S^3$, no knot factor is a (1,0)-curve and so, by Theorem \ref{low ne class}, $k = k'$. We would like to improve the inequality by showing that $m' = m$. 

Since each $x_i \geq -2$ by Inequality \ref{xsum}, each $x_i = -2$. Suppose we have a factor $(\wihat{M}_i, \wihat{T}_i)$ for which $\netextent_{x_i}(\wihat{M}_i, \wihat{T}_i) = 1/2$. Let $\mc{H}_i$ be a locally thin multiple vp-bridge surface for $(\wihat{M}_i, \wihat{T}_i)$ with $\netchi(\mc{H}_i) = -2$ and $\netextent(\mc{H}_i) = 1/2$.  By Theorem \ref{low ne class}, $(\wihat{M}_i, \wihat{T}_i)$ is either a trivial $\theta$-curve or a Hopf graph and $\mc{H}_i$ is a 3 or 4 times punctured sphere. Suppose $\wihat{T}_i$ is a Hopf graph. If the sphere $\mc{H}_i$ separates the vertices of $\wihat{T}_i$, then each loop of $\wihat{T}_i$ intersects $\mc{H}_i$ at least twice and the separating edge intersects it at least once, a contradiction. If $\mc{H}_i$ does not separate the vertices, then each loop of $\wihat{T}_i$ intersects $\mc{H}_i$ twice, and so $\mc{H}_i$ is a four-punctured sphere. But in this case, $\netextent(\mc{H}_i) = 1$, a contradiction. Thus, $\wihat{T}_i$ is not a Hopf graph and so $m' = m$.

Observe that if equality holds, then $NE_g(1) + NE_k(1) = m + k$. Since each $x_i = -2$ the result follows from Theorem \ref{low ne class}.
\end{proof}

For Brunnian $\theta$-curves a more careful analysis gives stronger results. 

\begin{theorem}\label{Brunnian result}
Suppose that $T \subset S^3$ is a composite Brunnian $\theta$-curve with $m$ factors in its prime decomposition. Then
\[
\t(S^3,T) \geq m
\]
and
\[
\b(T) \geq m + \frac{3}{2}
\]
\end{theorem}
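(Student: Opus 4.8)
The plan is to bootstrap the general additivity formula of Theorem~\ref{fundamental theorem} by proving a strictly better per-factor estimate that is available only under the Brunnian hypothesis. First I would pin down the shape of a prime factorization of $T$. Since $T$ is a Brunnian $\theta$-curve, every cycle is unknotted, so no edge carries a local knot; hence no factor is a nontrivial knot, and in fact (by the structure of trivalent vertex sum, cf.\ \cite{Wolcott}) each factor is again a $\theta$-curve. As recorded in the introduction, the factors of a Brunnian graph are Brunnian, so all $m$ factors $(\wihat{M}_i,\wihat{T}_i)$ are prime, nontrivial, Brunnian $\theta$-curves. In the notation of Theorem~\ref{fundamental theorem} this means $k=0$ and every factor is counted among the genus $2$ graphs.

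The key step is the lower bound $\netextent_\infty(\wihat{M}_i,\wihat{T}_i)\geq 3/2$ for each factor. I would argue by contradiction: if not, then for some realizable $x_i$ there is a locally thin $\mc{H}_i\in\H(\wihat{M}_i,\wihat{T}_i)$ satisfying (LT1)--(LT5) with $\netextent(\mc{H}_i)\leq 1$, and Theorem~\ref{low ne class} then forces $(\wihat{M}_i,\wihat{T}_i)$ into its classification list. Because $\wihat{T}_i$ is a nontrivial $\theta$-curve, it is not a knot, not trivial, and not a $2$-bouquet, so the only surviving possibilities are the trivial $\theta$-curve and Hopf graph (net extent $1/2$), the $(0,2)$-, $(1,1)$-, $(2,0)$-curves (net extent $1$), and essential Hopf slinkies. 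The first group cannot be Brunnian since a nontrivial Brunnian $\theta$-curve exists, and the $(0,2)$-, $(1,1)$-, $(2,0)$-curves and Hopf graphs are excluded by Corollary~\ref{smallBrun} (a Brunnian $\theta$-curve has no handlebody exterior), while essential Hopf slinkies are excluded by Proposition~\ref{no Brunnian slinky}. This contradiction gives $\netextent_\infty\geq 3/2$, which is the next half-integer above $1$.

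Finally I would assemble the two bounds. For any realizable $x$, Theorem~\ref{fundamental theorem} yields a prime factorization and realizable $x_i$ with
\[
\netextent_x(M,T)=\frac{1}{2}+\sum_{i\geq 0}\big((i-\tfrac{1}{2})NE_g(i)+i\,NE_k(i)\big).
\]
Since every factor is a genus $2$ graph with $\netextent_{x_i}\geq\netextent_\infty\geq 3/2$, we have $NE_k(i)=0$ and $NE_g(i)=0$ for $i<3/2$, so each of the $m$ factors contributes at least $3/2-1/2=1$, giving $\netextent_x(M,T)\geq \tfrac{1}{2}+m$. For tunnel number I would take $x=2\t(M,T)+2$ exactly as in Theorem~\ref{tunnel result 1}, obtaining $\t(M,T)+1\geq\netextent_x(M,T)\geq m+\tfrac{1}{2}$; since $\t$ is an integer this forces $\t(M,T)\geq m$. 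For bridge number I would take a minimal bridge sphere and $x=-2$ as in Theorem~\ref{bridge result 1}, obtaining $\b(T)-1\geq\netextent_{-2}(M,T)\geq m+\tfrac{1}{2}$, hence $\b(T)\geq m+\tfrac{3}{2}$.

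The hard part is precisely the per-factor estimate $\netextent_\infty\geq 3/2$: this is the only place the Brunnian hypothesis enters, and it relies on combining the full net-extent-$1$ classification (Theorem~\ref{low ne class}) with the fact that none of the low-complexity classes, nor the trivial $\theta$-curve or Hopf graph at net extent $1/2$, can be a Brunnian $\theta$-curve (Corollary~\ref{smallBrun} and Proposition~\ref{no Brunnian slinky}). Everything after that is bookkeeping with the additivity formula and the integrality of tunnel number.
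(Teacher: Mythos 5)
Your proposal is correct and takes essentially the same approach as the paper: the paper likewise shows every factor is a Brunnian $\theta$-curve, rules out factors of net extent $0$, $1/2$, and $1$ using Theorem \ref{low ne class}, Corollary \ref{smallBrun}, and Proposition \ref{no Brunnian slinky} (your per-factor bound $\netextent_\infty \geq 3/2$, which the paper phrases as $NE(0)=NE(1/2)=NE(1)=0$), and then feeds this into the additivity machinery with the same choices $x = 2\t(M,T)+2$ and $x=-2$, finishing with integrality of tunnel number. The only cosmetic difference is that you establish the bound for $\netextent_\infty$ by contradiction, whereas the paper bounds $\netextent_{x_i}$ for the specific $x_i$ produced by the additivity theorem.
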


\begin{proof}
We begin by showing that each factor of a Brunnian $\theta$-curve is also Brunnian. Suppose that $T \subset S^3 = M$ is a Brunnian $\theta$-curve. Since it is a $\theta$-curve and every $S^2$ separates $S^3$, the pair $(S^3, T)$ is irreducible and has no (lens space, core) summands. Suppose that $F \subset (M,T)$ is an essential sphere. If $F$ does not separate the vertices of $T$, each edge of $T$ must intersect $F$ an even number of times. If it does separate the vertices of $T$, then each edge intersects $F$ an odd number of times. If $F$ is twice punctured, it intersects a single edge $e_3$ of $T$. The other two edges and both vertices then lie on the same side of $F$. Let $e_1$ be one of the other edges of $T$.  Since $T$ is Brunnian, the cycle $e_1 \cup e_3$ is the unknot $\tau$ and $F$ gives a connected sum decomposition of $\tau$. Thus, both components of $(e_1 \cup e_2) \setminus F$ must be arcs parallel into $F$. In particular, this means that $F$ is $\boundary$-parallel to the side not containing $e_1$. Thus, $(M,T)$ contains no essential twice-punctured spheres. If $F$ is a thrice-punctured sphere, each edge of $T$ intersects $F$ exactly once and again, $F$ is a connected summing sphere on the cycles of $T$. Thus, surgering $(M,T)$ along $F$ produces two pairs, each a Brunnian $\theta$-graph in $S^3$. In particular, every factor of the given $(M,T)$ is a Brunnian $\theta$-curve. Let $m$ be the number of factors.

To prove the statement for tunnel number, set $x = 2\t(M,T)  + 2$, this is the negative Euler characteristic of a minimal Heegaard surface for the exterior of $T$ and set $e = x/2$; this is its net extent. To prove the statement for bridge number, set $x = -2$ (the negative Euler characteristic of a sphere) and $e = (x + 2\b(M,T))/2$; this is its net extent. As in the proofs of Theorem \ref{fundamental theorem} and \ref{bridge result 1},  for each factor $(\wihat{M}_i, \wihat{T}_i)$ (necessarily a prime, Brunnian $\theta$-curve) of $(M_i, T_i)$ there exists an even integer $x_i \geq -2$ such that 
\[
e \geq \frac{1}{2} + \sum\limits_{i \geq 0} \big((i - \frac{1}{2}) NE(i)\big).
\]
As $T$ is Brunnian, there is no knot factor or trivial $\theta$-graph factor. Consequently, $NE(0) = NE(1/2) = 0$. Suppose that some factor $(\wihat{M}_i, \wihat{T}_i)$ has $\netextent_{x_i}(\wihat{M}_i, \wihat{T}_i) = 1$. Recalling that $\wihat{T}_i$ is a $\theta$-graph, we see that by Theorem \ref{low ne class}, the pair is either knotted of low complexity or an essential Hopf slinky. By Corollary \ref{smallBrun} it cannot be knotted of low complexity and by Proposition \ref{no Brunnian slinky}, it cannot be an essential Hopf slinky. Consequently, $NE(1) = 0$. Thus,
\[
e  \geq \frac{1}{2} + \big(\frac{3}{2} - \frac{1}{2}\big)m = \frac{1}{2} + m.
\]

In the tunnel number case, this produces
\[
\t(S^3,T) \geq m - \frac{1}{2}.
\]
Since both tunnel number and $m$ are integers, $\t(M,T) \geq m$ as desired.

In the bridge number case, this produces
\[
\b(T) \geq \frac{3}{2} + m.
\]
Unlike tunnel number, the bridge number need not be an integer.
\end{proof}

Using a different analysis, we can prove Morimoto's bound for m-small pairs. This is very similar to \cite[Theorem 7.3]{TT2}.

\begin{theorem}\label{thm: Morimoto}
Suppose that $(M,T)$ is an irreducible, composite pair where every sphere in $M$ is separating and $T$ a $\theta$-curve or handcuff curve. Let $(\wihat{M}_1, \wihat{T}_1), \cdots, (\wihat{M}_n, \wihat{T}_n)$ be the factors of a prime factorization of $(M,T)$ and suppose that each is m-small. Then
\[
\t(M,T) \geq \t(\wihat{M}_1, \wihat{T}_1) + \cdots + \t(\wihat{M}_n, \wihat{T}_n).
\]
\end{theorem}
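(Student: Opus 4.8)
The plan is to translate tunnel number into net extent, apply the additivity of net extent to split off the prime factors, and then convert the net extent of each m-small factor back into its tunnel number by a per-factor lower bound whose proof uses m-smallness to force thin position to be connected.

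First I would set up the net extent inequality. By the definition of tunnel number there is a connected Heegaard surface $H$ for $M\setminus T$, disjoint from $T$, with $\g(H)=\t(M,T)+2$; as noted after the definition of vp-bridge surface, such an $H$ is an element of $\H(M,T)$. Setting $x=-\chi(H)=2\t(M,T)+2$, we have $|H\cap T|=0$, so $\netextent(H)=\g(H)-1=\t(M,T)+1\geq\netextent_x(M,T)$. Since $(M,T)$ is irreducible, composite, has only separating spheres, and $T$ is genus $2$, the Additivity Theorem (Theorem \ref{Add Thm}), together with the identity $p_3=m-1$ from the proof of Theorem \ref{fundamental theorem} and Theorem \ref{unique roots} to identify the factors, produces realizable integers $x_i$ for the factors $(\wihat M_i,\wihat T_i)$ with
\[
\netextent_x(M,T)=\frac12+\sum\limits_{i\in I_g}\Big(\netextent_{x_i}(\wihat M_i,\wihat T_i)-\frac12\Big)+\sum\limits_{i\in I_k}\netextent_{x_i}(\wihat M_i,\wihat T_i),
\]
where $I_g$ and $I_k$ index the factors that are genus $2$ graphs and knots, respectively.

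The heart of the argument is the following per-factor bound: if $(N,S)$ is a prime, m-small, connected, irreducible pair with $N$ closed, then $\netextent_\infty(N,S)\geq \t(N,S)-\chi(S)/2$. Since $\netextent_{x_i}\geq\netextent_\infty$, this suffices. To prove it I would choose a locally thin $\mc{K}\in\H(N,S)$ realizing $\netextent_\infty$. By (LT3) every component of $\mc{K}^-$ is c-essential, hence an essential meridional surface; m-smallness forbids these, so $\mc{K}=\mc{K}^+$ is a single connected vp-bridge surface. I would then show that any connected vp-bridge surface $\mc{K}$ satisfies $\netextent(\mc{K})\geq \g(N\setminus S)-1+\chi(S)/2$: tubing $\mc{K}$ along the bridge arcs on one side and absorbing the vertical arcs and internal vertices into the negative boundary yields a Heegaard surface $\Sigma$ of the exterior with $\g(\Sigma)\leq \netextent(\mc{K})+1-\chi(S)/2$, while $\g(\Sigma)\geq\g(N\setminus S)$. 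Because $\t(N,S)=\g(N\setminus S)+\chi(S)-1$, this rearranges to the claimed bound; for $S$ a knot it reads $\netextent_\infty\geq\t$, and for $S$ a genus $2$ graph ($\chi(S)=-1$) it reads $\netextent_\infty\geq\t+\tfrac12$. Granting this, each genus $2$ graph factor contributes $\netextent_{x_i}-\tfrac12\geq\t(\wihat M_i,\wihat T_i)$ and each knot factor contributes $\netextent_{x_i}\geq\t(\wihat M_i,\wihat T_i)$, so
\[
\t(M,T)+1\geq\frac12+\sum\limits_{i=1}^{n}\t(\wihat M_i,\wihat T_i).
\]
Thus $\t(M,T)\geq\sum_i\t(\wihat M_i,\wihat T_i)-\tfrac12$, and since both sides are integers, $\t(M,T)\geq\sum_i\t(\wihat M_i,\wihat T_i)$.

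The main obstacle is the per-factor bound, and within it the genus bookkeeping of the tubing construction: one must verify that absorbing the vertical arcs and trivalent vertices into the negative boundary produces an honest Heegaard surface of the exterior and that the Euler-characteristic count yields precisely the correction term $-\chi(S)/2$. This correction is exactly what distinguishes the graph case from the knot case of \cite[Theorem 7.3]{TT2} and is the reason the theorem is \emph{very similar to}, but not identical with, that result. Care is also needed to confirm that m-smallness rules out \emph{every} thin surface—so that locally thin position is genuinely connected—rather than only meridional spheres; this is where the hypothesis is used in full strength.
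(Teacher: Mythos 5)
Your proposal is correct and follows essentially the same route as the paper: translate tunnel number into net extent via a minimal Heegaard surface of the exterior, apply the Additivity Theorem (with $p_3=m-1$ and Theorem \ref{unique roots}), use m-smallness to force each factor's locally thin position to be a single connected vp-bridge surface, and then tube along bridge arcs and absorb vertical arcs and vertices to convert that surface into a Heegaard surface of the factor's exterior, finishing with integrality of tunnel number. Your per-factor lemma $\netextent_\infty(N,S)\geq \t(N,S)-\chi(S)/2$ is simply a uniform packaging of the case analysis (vertex-separating versus not, $\theta$-curve versus handcuff) that the paper carries out inline, with identical genus bookkeeping.
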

\begin{proof}
Let $H \in \H(M,T)$ be a minimal genus Heegaard surface for $M\setminus T$. Set $x = 2\t(M,T) + 2$ and recall that $\netextent_x(H) = x/2$. By Theorems \ref{Add Thm} and \ref{unique roots}, for each $i$, there exists a realizable integer $x_i$ for $(\wihat{M}_i, \wihat{T}_i)$ so that
\[
\t(M,T) + 1 \geq \netextent_x(M,T) \geq - \frac{p_3}{2} + \sum\limits_{i=1}^n \netextent_{x_i}(\wihat{M}_i, \wihat{T}_i)
\]
where $p_3$ is the number of trivalent vertex sums in the decomposition. 

For each $i$, choose a locally thin $H_i \in \H(\wihat{M}_i,\wihat{T}_i)$ such that $\netchi(H_i) \leq x_i$ and $\netextent(H_i)  = \netextent_{x_i}(\wihat{M}_i, \wihat{T}_i)$. Without loss of generality, we may assume that $x_i = \netchi(H_i)$. Since each $(\wihat{M}_i, \wihat{T}_i)$ is m-small, $\mc{H}_i^{-} = \nil$. Thus, $H_i$ is connected. Let $p = |H_i \cap \wihat{T}_i|$. If $H_i$ does not separate the vertices of $\wihat{T}_i$ (or if $\wihat{T}_i$ is a knot), then $p$ is even and there are $b = p/2$ bridge arcs of $\wihat{T}_i \setminus H_i$ on one side of $H_i$. If $H_i$ does separate the vertices of $\wihat{T}_i$ and $\wihat{T}_i$ is a handcuff curve, then the loops of $\wihat{T}_i$ each intersect $H_i$ an even number of times, while the separating edge intersects $H_i$ an odd number of times. In that case, there are $b = (p-1)/2$ bridge arcs on each of the two sides of $H_i$. If $H_i$ separates the vertices of $\wihat{T}_i$ and $\wihat{T}_i$ is a $\theta$-curve, each edge of $\wihat{T}_i$ intersects $H_i$ an odd number of times. In this case, there are $b = (p-3)/2$ bridge arcs on either side of $H_i$. Let $\epsilon_i = 1 - \chi(\wihat{T}_i)$.

In each case, successively tube along bridge arcs, all on the same side of $H_i$, to create a connected surface $H'_i \in \H(\wihat{M}_i, \wihat{T}_i)$ of genus
\[
g'_i = (x_i + 2)/2 + b
\]
Observe that $H'_i$ separates the vertices of $\wihat{T}_i$ if and only if $H_i$ does. If $H'_i$ does not separate the vertices of $\wihat{T}_i$, we see that $H'_i$ is a Heegaard surface for $\wihat{M}_i \setminus \wihat{T}_i$. In which case, observe
\[
\t(\wihat{M}_i, \wihat{T}_i) \leq  g'_i - \epsilon_i = \netextent_{x_i}(\wihat{M}_i, \wihat{T}_i)+\chi(\wihat{T}_i).
\]

If $H'_i$ separates the vertices of $\wihat{T}_i$ and $\wihat{T}_i$ is a handcuff curve, then on either side of $H'_i$, the graph $\wihat{T}_i$ consists of a single vertical arc and a ghost arc that is a loop based at a single vertex. Attach the frontier of a neighborhood of one of these vertical arcs and ghost arcs to $H'_i$ to create $H''_i$. Notice that $H''_i$ is a Heegaard surface for $\wihat{M}_i \setminus \wihat{T}_i$. It has genus equal to $g'_i + 1$. We have, therefore,
\[
\t(\wihat{M}_i, \wihat{T}_i) \leq  (g'_i + 1) - \epsilon_i  = \netextent_{x_i}(\wihat{M}_i, \wihat{T}_i) - \frac{1}{2}
\]

If $H'_i$ separates the vertices of $\wihat{T}_i$ and $\wihat{T}_i$ is a $\theta$-curve, then on each side of $H'_i$, the graph $\wihat{T}_i$ consists of a single vertex and three vertical arcs. Choose a side and attach to $H'_i$ the frontier of a neighborhood of the arcs on one side to create a Heegaard surface $H''_i$ for $\wihat{M}_i \setminus \wihat{T}_i$. It has genus $g'_i + 2$. We have, therefore, 
\[
\t(\wihat{M}_i, \wihat{T}_i) \leq (g'_i + 1) - \epsilon_i =  \netextent_{x_i}(\wihat{M}_i, \wihat{T}_i) - \frac{1}{2}
\]
Thus, 
\[
\t(M,T) +  1 \geq -\frac{p_3}{2} + \sum\limits_{i} \left(\t(\wihat{M}_i, \wihat{T}_i)  + \frac{1}{2}\right) + \sum\limits_{j} \big(\t(\wihat{M}_j, \wihat{T}_j)\big) 
\]
where the first sum is over all $i$ such that $\wihat{T}_i$ is a genus 2 graph and the second over all $j$ such that $\wihat{T}_j$ is a knot. Observe that $p_3$ is one less than the number of factors that are genus 2 graphs. 
Hence, letting $m$ be the number of genus 2 graph factors,
\[
\t(M,T)  \geq -\frac{1}{2} + \sum\limits_{i=1}^n \t(\wihat{M}_i, \wihat{T}_i).
\]
Since tunnel number is an integer,
\[
\t(M,T)  \geq \sum\limits_{i=1}^n \t(\wihat{M}_i, \wihat{T}_i).
\]
\end{proof}

\section{Achieving Equality}\label{sec:equality}

In this section, we study the situation when $\t(M,T)$ achieves the lower bound in Theorem \ref{tunnel result 1}. Recall from \cite{MT} that any two prime decompositions of an irreducible pair $(M,T)$ with $T$ a genus 2 graph and $M$ a compact 3-manifold without nonseparating 2-spheres have the same set of factors. Let $N_k(g,b)$ and $N_g(g,b)$ be the number of factors that are $(g,b)$-curves that are knots or graphs, respectively. Let $N(\text{Hopf})$ be the number of Hopf graph factors  and $N(\pi)$ be the number of factors that are propeller knots which are not also essential Hopf slinkies. For an essential Hopf slinky $\sigma$, let $\ell(\sigma)$ denote its length. Also note that the quantity $-\chi(\sigma)$ is 1 if $\sigma$ is a genus 2 graph and 0 if it is a knot. Let $N(\text{tr2bq})$ be the number of trivial 2-bouquets in the factorization. (This is either 0 or 1.)

\begin{theorem}\label{odd bound}
Let $(M,T)$ be a connected, irreducible pair such that  every sphere in $M$ separates, $T$ is a genus 2 graph and  $(M,T)$ is composite. Suppose that a (and hence every) prime factorization of $(M,T)$ has $n$ factors of which $m$ are genus 2 graphs which are not trivial $\theta$-graphs or Hopf graphs and $k$ are knots which are not $(1,0)$ knots. If
\[
\t(M,T) = \frac{m-1}{2} + k,
\]
then every factor is either a trivial $\theta$-curve, Hopf graph, trivial 2-bouquet, (0,2)-curve,  (1,0)-curve, (1,1)-curve, (2,0)-curve, Hopf slinky, or propeller knot. Furthermore, the following holds:
\[\begin{array}{ll}
N_g(1,1) + 2N(1,0) + 2N(\text{Hopf}) + 3N_g(2,0) &\\
 + 2N_k(2,0) + 4N(\pi) + \sum\limits_{\sigma}(\ell(\sigma) - \chi(\sigma)) &\leq\\
  3 + N(\text{tr2bq}) + N_g(0,2) + 2N_k(0,2)&\\
  \end{array}
\]
where the sum is over all Hopf slinkies $\sigma$.
\end{theorem}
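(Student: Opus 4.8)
The plan is to show that the equality hypothesis collapses the chain of inequalities proving Theorem~\ref{tunnel result 1} into a chain of equalities, read off the first conclusion, and then exploit the ``net Euler characteristic budget'' from the Additivity Theorem to extract the displayed inequality. First I would set $x = 2\t(M,T) + 2$, so that the equality hypothesis gives $x = m + 2k + 1$. Theorem~\ref{fundamental theorem} then supplies a prime factorization, realizable integers $x_i \geq -2$ with $x_1 + \cdots + x_n \leq x - 2(n-1)$, and
\[
\t(M,T) + 1 = \netextent_x(M,T) = \frac{1}{2} + \sum_{i \geq 0}\Big((i - \tfrac{1}{2})NE_g(i) + i\,NE_k(i)\Big).
\]
Since $\t(M,T) + 1 = \tfrac{1}{2} + \tfrac{m}{2} + k$ and, by Remark~\ref{rem: fundamental theorem}, each genus~2 factor contributes a non-negative amount to the sum that vanishes exactly for trivial $\theta$-curves and Hopf graphs while each knot factor contributes a non-negative integer vanishing exactly for $(1,0)$-curves, equality forces every factor counted by $m$ to have net extent exactly $1$ and every factor counted by $k$ to have net extent exactly $1$. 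Applying Theorem~\ref{low ne class} to a locally thin $\mc{H}_i \in \H(\wihat{M}_i, \wihat{T}_i)$ realizing $\netextent_{x_i}(\wihat{M}_i, \wihat{T}_i)$ then names the type of each factor, which is precisely the first conclusion.

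For the inequality, the key step is to bound each $x_i$ below by the net Euler characteristic of the canonical net-extent-$1$ surface attached to the type of $(\wihat{M}_i,\wihat{T}_i)$. The surface $\mc{H}_i$ satisfies $\netchi(\mc{H}_i) \leq x_i$, and by the biconditional of Theorem~\ref{low ne class} it is forced to be a $4$-punctured sphere, twice-punctured torus, unpunctured genus~2 surface, standard propeller surface, or standard slinky surface according to the type already determined above. Reading off $\netchi(\mc{H}_i)$ gives $x_i \geq -2$ for $(0,2)$-curves, trivial $\theta$-curves, and trivial $2$-bouquets; $x_i \geq 0$ for Hopf graphs, $(1,0)$-curves, and $(1,1)$-curves; $x_i \geq 2$ for $(2,0)$-curves; $x_i \geq 4$ for propeller knots; and $x_i \geq \ell(\sigma)$ for an essential Hopf slinky $\sigma$, the last using that $\ell(\sigma)$ is by definition the minimum of $\netchi$ over standard slinky surfaces. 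I expect this to be the main obstacle: one must take seriously that Theorem~\ref{low ne class} is a genuine classification, so that the \emph{fixed} graph type of a factor pins down $\netchi(\mc{H}_i)$ for every locally thin net-extent-$1$ representative, rather than merely asserting the existence of some realizing surface.

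Finally I would combine the per-factor bounds with the budget. Summing them and using $\sum_i x_i \leq x - 2(n-1) = m + 2k + 3 - 2n$ yields
\[
\sum_i \big(\text{type-minimal } \netchi\big) \leq m + 2k + 3 - 2n.
\]
It then remains to rewrite $m$, $k$, and $n$ in terms of the counts $N_g(g,b)$, $N_k(g,b)$, $N(\text{Hopf})$, $N(\pi)$, $N(\text{tr2bq})$, $N(1,0)$ and the slinky lengths. Using $n = m + k + N(\text{tr}\theta) + N(\text{Hopf}) + N(1,0)$ together with the fact that $-\chi(\sigma)$ equals $1$ for a genus~2 slinky and $0$ for a knot slinky—so the genus~2 slinky count is absorbed into $\sum_\sigma(\ell(\sigma) - \chi(\sigma))$—this is a bookkeeping rearrangement producing exactly the asserted inequality. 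The only care needed is to confirm that the constant $3$ arises as $x - 2(n-1) = (m + 2k + 1) - 2(n-1)$ and that each type's minimal $\netchi$ enters with the correct multiplicity (for instance $N(\text{tr}\theta)$ and $N_k(1,1)$ cancel entirely), which one verifies type by type.
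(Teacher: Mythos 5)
Your proposal is correct and follows essentially the same route as the paper's proof: collapse the inequality chain from Theorems \ref{fundamental theorem} and \ref{tunnel result 1} to pin each factor's net extent, invoke Theorem \ref{low ne class} together with the mutual exclusivity built into the definitions of the graph types to force $\netchi(\mc{H}_i)$ (the paper's $x'_i$) to its type-determined value, and then sum against the budget $x - 2(n-1)$ with the same bookkeeping identities for $n$, $m$, and $k$. The subtlety you flag—that the classification must pin down the realizing surface for each fixed type, not merely produce some surface—is exactly the point the paper handles by combining Theorem \ref{low ne class} with the exclusions in the definitions of $(0,2)$-, $(1,1)$-, and $(2,0)$-curves, so your argument and the paper's coincide there as well.
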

\begin{proof}
We continue the argument of Theorems \ref{fundamental theorem} and \ref{tunnel result 1}, adapting them slightly. Since $t = \t(M,T) = (m-1)/2 + k$, there exists a Heegaard surface $H$ for $M \setminus T$ with $-\chi(H) = 2t + 2$ and $\extent(H) = t+1$. As in the earlier theorems, for each $i$, there exists an even integer $x_i \geq -2$ such that $x_i$ is realizable for $(\wihat{M}_i, \wihat{T}_i)$ and
\[
x_1 + \cdots + x_n \leq x - 2(n-1) = m + 2k - 2n + 3
\]

We also have
\[
\t(M,T) \geq -1 + \netextent_x(M,T) = -\frac{1}{2} + \sum\limits_{i \geq 0} \big((i - 1/2)NE_g(i) + iNE_k(i)\big).
\]
As in Theorem \ref{tunnel result 1}, 
\[
\t(M,T) \geq -\frac{1}{2} + \sum\limits_{i \geq 0} \big((i - 1/2)NE_g(i) + iNE_k(i)\big) \geq \frac{m-1}{2} + k.
\]
Note that equality holds only if every factor $(\wihat{M}_i, \wihat{T}_i)$ has $\netextent_{x_i}(\wihat{M}_i, \wihat{T}_i) \leq 1$. Thus, by Theorem \ref{low ne class}, each factor is a trivial $\theta$-curve, Hopf graph, trivial 2-bouquet,  knotted of low complexity, propeller knot, or essential Hopf slinky. Furthermore, suppose $(\wihat{M}_i, \wihat{T}_i)$ is a factor. Then $\netextent_{x_i}(\wihat{M}_i , \wihat{T}_i) = 0$ if and only if it is a $(1,0)$-curve. It is a trivial $\theta$-curve or Hopf graph if and only if $\netextent_{x_i}(\wihat{M}_i, \wihat{T}_i) = 1/2$. It follows that if $\wihat{T}_i$ is a trivial $\theta$-curve then $x_i = -2$ and if $\wihat{T}_i$ is a (1,0)-curve or Hopf graph then $x_i = 0$. 

For each factor $(\wihat{M}_i, \wihat{T}_i)$, let $\mc{H}_i$ be a locally thin multiple vp-bridge surface such that $x'_i = \netchi(\mc{H}_i) \leq x_i$ and $\netextent(\mc{H}_i) = \netextent_{x_i}(\wihat{M}_i, \wihat{T}_i)$. Note that 
\begin{equation}\label{x'ineq}
x'_1 + \cdots + x'_n \leq m + 2k - 2n +3.
\end{equation}

By Theorem \ref{low ne class} and the definition of each type of spatial graph, the following hold for each factor:
\begin{itemize}
\item if $(\wihat{M}_i, \wihat{T}_i)$ is a trivial $\theta$-curve, trivial 2-bouquet, or $(0,2)$-curve then $\mc{H}_i$ is a sphere;
\item if $(\wihat{M}_i, \wihat{T}_i)$ is a (1,0)-curve, Hopf graph, or (1,1)-curve, then $\mc{H}_i$ is a torus;
\item if $(\wihat{M}_i, \wihat{T}_i)$ is a (2,0)-curve then $\mc{H}_i$ is an unpunctured genus 2 surface;
\item if $(\wihat{M}_i, \wihat{T}_i)$ is an essential Hopf slinky, then $\mc{H}_i$ is the standard slinky surface.
\item if $(\wihat{M}_i, \wihat{T}_i)$ is a propeller knot that is not an essential Hopf slinky, then $\mc{H}_i$ is the standard propeller surface.
\end{itemize}
Correspondingly, we conclude that
\begin{itemize}
\item $x'_i = -2$ if and only if $(\wihat{M}_i, \wihat{T}_i)$ is a trivial $\theta$-curve, trivial 2-bouquet, or $(0,2)$-curve;
\item $x'_i = 0$ if and only if $(\wihat{M}_i, \wihat{T}_i)$ is a (1,0)-curve, Hopf graph, or (1,1)-curve;
\item $x_i = 2$ if and only if $(\wihat{M}_i, \wihat{T}_i)$ is a $(2,0)$-curve or essential Hopf slinky of length 2;
\item $x_i = 4$ if and only if $(\wihat{M}_i, \wihat{T}_i)$ is an essential Hopf slinky of length 4 or a propeller knot that is not an essential Hopf slinky;
\item $x_i \geq 6$ if and only if $(\wihat{M}_i, \wihat{T}_i)$ is an essential Hopf slinky of length $x_i \geq 6$.
\end{itemize}

Let $N(tr\theta)$ be 1 if the factorization contains a trivial $\theta$ curve and 0 otherwise. Let $N_g(\sigma)$ be the number of essential Hopf slinkies that are genus 2 graphs,

Thus,
\[\begin{array}{rl}
\sum x'_i =& -2(N(tr\theta) + N(\text{tr2bq})) -2N_g(0,2) - 2N_k(0,2) \\
&+ 2N_g(2,0) + 2N_k(2,0) + 4N(\pi) + \sum\limits_{\sigma}\ell(\sigma),\\
\end{array}
\]
and
\[\begin{array}{rl}
3 +m + 2k - 2n &=\\
3 + 2(m+ k - n) - m &=\\
3 -2\big(N(0,1) + N(tr\theta) + N(\text{Hopf})\big) &\\
-\big(N_g(0,2) + N_g(1,1) + N_g(2,0) + N_g(\sigma) + N(\text{tr2bq})\big).
 \end{array}
\]
Plugging into Inequality \eqref{x'ineq} and rearranging, we obtain the desired inequality.
\end{proof}

\begin{corollary}\label{thetafact}
Suppose that $(M,T)$ is a connected, irreducible, composite pair with $T$ a $\theta$-graph such that no factor of $(M,T)$ is a knot or $(0,2)$-curve. If $(M,T)$ has $m$ factors and $\t(M,T)= \frac{m-1}{2}$, then $T$ has exactly 3 factors and they are all $(1,1)$-curves.
\end{corollary}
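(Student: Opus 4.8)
The plan is to derive the corollary directly from Theorem \ref{odd bound}, using its displayed inequality together with the integrality of tunnel number. First I would pin down the possible factor types. Since $T$ is a $\theta$-curve and (by hypothesis) no factor is a knot, every factor is itself a $\theta$-curve: connected sums along edges would introduce knot factors, while trivalent vertex sums of $\theta$-curves again yield $\theta$-curves. In particular there are no $2$-bouquet, handcuff, $(1,0)$-curve, Hopf graph, or propeller-knot factors, and there is no trivial factor either, since a trivial $\theta$-curve factor would force $T$ to be obtained by tying nontrivial knots into the edges of a trivial $\theta$-curve, creating a knot factor. Together with the standing assumption that no factor is a $(0,2)$-curve, this gives $k=0$ and
\[
N(1,0)=N(\text{Hopf})=N_k(2,0)=N(\pi)=N(\text{tr2bq})=N_g(0,2)=N_k(0,2)=0,
\]
and also shows that the number $n$ of factors equals $m$ (no trivial factors are present).

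Next I would apply Theorem \ref{odd bound}, whose hypotheses hold because $\t(M,T)=\frac{m-1}{2}=\frac{m-1}{2}+k$. Substituting the vanishing quantities above into its displayed inequality collapses it to
\[
N_g(1,1)+3N_g(2,0)+\sum_{\sigma}\big(\ell(\sigma)-\chi(\sigma)\big)\le 3,
\]
the sum being over the Hopf slinky factors. Every surviving factor is a $(1,1)$-curve, a $(2,0)$-curve, or a Hopf slinky, so $m=N_g(1,1)+N_g(2,0)+N_g(\sigma)$, where $N_g(\sigma)$ is the number of slinky factors. Since every factor is a genus $2$ graph, $\chi(\sigma)=-1$ and $\ell(\sigma)\ge 2$, so each slinky contributes at least $3$ and each $(2,0)$-curve contributes exactly $3$ to the left side. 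A short case analysis then forces $N_g(2,0)=N_g(\sigma)=0$ and $N_g(1,1)=m$: a single $(2,0)$-curve or slinky factor already accounts for the full bound of $3$, leaving no room for any further factor and hence $m=1$, which contradicts compositeness ($m=n\ge 2$). This leaves only $m=N_g(1,1)\in\{2,3\}$, with all factors $(1,1)$-curves.

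Finally I would rule out $m=2$ by integrality. In that case all factors are $(1,1)$-curves, yet the hypothesis gives $\t(M,T)=\frac{2-1}{2}=\frac12$, impossible since tunnel number is a non-negative integer. Hence $m=3$ and all three factors are $(1,1)$-curves, which is the assertion. The main obstacle is precisely this last step: the reduced inequality cannot by itself separate two $(1,1)$-curve factors from three, so the crux is recognizing that the parity forced by $\t(M,T)=\frac{m-1}{2}\in\Z$ is what eliminates the two-factor configuration. A secondary point requiring care is the initial bookkeeping—verifying that every factor is a nontrivial $\theta$-curve so that $m=n$ and the list of possible factor types is complete.
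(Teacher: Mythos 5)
Your proposal is correct and follows essentially the same route as the paper's own proof: specialize the inequality of Theorem \ref{odd bound} using $k=0$ and the absence of trivial $\theta$-curve, Hopf graph, $(0,2)$-curve, and $2$-bouquet/handcuff factors, use $\ell(\sigma)-\chi(\sigma)\geq 3$ together with compositeness (at least two factors) to eliminate $(2,0)$-curves and Hopf slinkies, and invoke the integrality of $\t(M,T)=\frac{m-1}{2}$ (i.e.\ oddness of $m$) to rule out $m=2$. The paper's proof is just a terser version of this same argument, so no further comparison is needed.
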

\begin{proof}
Notice that $m$ must be odd. Since no factor of $(M,T)$ is a knot, no factor is a trivial $\theta$-curve. The result follows from Theorem \ref{odd bound}, after observing that $k = N(2,0) = 0$, that $\ell(\sigma) \geq 2$ for any Hopf slinky, and the hypothesis that there are at least two factors in a prime decomposition of $(M,T)$. 
\end{proof}

We conclude by analyzing the distribution of knotted curves of low complexity when tunnel number is minimized relative to the number of factors. For convenience, we restrict to the case when $T$ is a $\theta$-curve or handcuff curve. With some slight modifications we could also deduce a version for knots or 2-bouquets. 

\begin{corollary}\label{cor: distribution}
Suppose that $(M,T)$ is a composite, connected, irreducible pair such that every sphere in $M$ is separating and $T$ is a genus 2 graph.  Suppose that $(M,T)$ has $n$ factors, of which $m$ are genus 2 graphs that are not the trivial $\theta$-curve or a Hopf graph and $k$ of which are knots that are not $(1,0)$-curves.  If
\[
\t(M,T) = \frac{m-1}{2} + k
\]
then all factors have net extent at most 1 and, 
\begin{enumerate}
\item the number of factors that are trivial $\theta$-curves, trivial 2-bouquets, Hopf graphs,  knotted of low complexity, or Hopf slinkies of length 2 is at least $(4n-3)/6$.
\item the number of factors that are trivial $\theta$-curves, trivial 2-bouquets, $(0,2)$-curves, $(1,0)$-curves, Hopf graphs, or $(1,1)$-curves is at least $(2n-3)/4$.
\item the number of factors that are trivial $\theta$-curves, trivial 2-bouquets, $(0,2)$-curves, or (1,1)-knots is at least $(n-3)/3$. 
\end{enumerate}
\end{corollary}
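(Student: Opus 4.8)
The plan is to read all three bounds off as elementary linear consequences of Theorem~\ref{odd bound}. The hypothesis $\t(M,T)=\frac{m-1}{2}+k$ is precisely the equality case treated there, so Theorem~\ref{odd bound} does the real work: it forces every factor in a prime factorization to be one of the nine enumerated types (trivial $\theta$-curve, Hopf graph, trivial 2-bouquet, $(0,2)$-curve, $(1,0)$-curve, $(1,1)$-curve, $(2,0)$-curve, essential Hopf slinky, or propeller knot), each of net extent at most $1$ by Theorem~\ref{low ne class} --- which already yields the first assertion --- and it supplies the master inequality relating the various factor-counts. Everything remaining is bookkeeping with those counts.

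First I would abbreviate the number of factors of each type, splitting the $(g,b)$-curve counts into their graph and knot versions, and record that $n$ is the sum of all these counts. Each of the three parts singles out a ``cheap'' subcollection of factor types and asserts that its cardinality $c$ is at least a fixed rational multiple of $n$; writing $c'=n-c$ for the complementary ``expensive'' count and substituting $n=c+c'$, parts (1), (2), (3) become, after clearing denominators, the integer inequalities
\[
2c-4c'+3\ \geq\ 0,\qquad 2c-2c'+3\ \geq\ 0,\qquad 2c-c'+3\ \geq\ 0,
\]
respectively. The differing coefficients $4,2,1$ on $c'$ reflect how ``expensive'' the excluded factors are in each case.

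The key step is to produce each of these as the master-inequality slack of Theorem~\ref{odd bound} (after bounding its slinky term below) plus a non-negative combination of the individual type-counts, each of which is $\geq 0$. To make the master inequality usable I would first bound the slinky term below using $\ell(\sigma)-\chi(\sigma)\geq 2$ for every Hopf slinky $\sigma$. For parts (2) and (3) every slinky lies in the expensive set, so this coarse bound suffices, and a direct computation shows that the target form minus the slack has all non-negative coefficients. Part~(1) is the delicate case, since Hopf slinkies of \emph{length~2} are cheap while longer ones are expensive; there I would instead split the slinky term and apply the sharper $\ell(\sigma)-\chi(\sigma)\geq 4$ to the length-$\geq 4$ slinkies before running the same non-negativity check.

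I expect the only genuine obstacle to be organizational rather than conceptual: one must sort all nine factor types correctly into cheap versus expensive for each of the three fractions, and then confirm that the leftover coefficients after subtracting the slack are non-negative --- this verification is exactly what pins down the constants $(4n-3)/6$, $(2n-3)/4$, and $(n-3)/3$. Since clearing denominators is valid, each fractional bound is equivalent to its displayed integer inequality, so no rounding or parity case analysis is needed.
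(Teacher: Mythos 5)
Your proposal is correct and is essentially the paper's own proof: both treat the hypothesis as the equality case of Theorem~\ref{odd bound} (which gives the classification of factors and hence the net extent claim) and then deduce all three fractional bounds as elementary linear consequences of that theorem's master inequality, using $\ell(\sigma)-\chi(\sigma)\geq 2$ for every Hopf slinky and the sharper bound $\ell(\sigma)-\chi(\sigma)\geq 4$ for slinkies of length at least $4$, together with non-negativity of the individual type counts. The only difference is organizational: the paper groups the factors into counts $n_-,n_0,n_2,n_+$, derives a single simplified inequality, and chains linear manipulations, whereas you verify directly that each target expression minus the master-inequality slack has non-negative coefficients --- the same computation in certificate form.
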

\begin{proof}
Let $n_-$ be the number of factors that are trivial 2-bouquets or (0,2)-curves. Let $n_0$ be the number that are Hopf graphs, (1,0)-curves, or (1,1)-curves that are genus 2 graphs. Let $n_2$ be the number that are (2,0)-curves or Hopf slinkies of length  2 and let $n_+$ be the number that are propeller knots or Hopf slinkies of length greater than 2. Recalling that the length of a Hopf slinky is an even integer which is at least 2, the inequality in the Conclusion of Theorem \ref{odd bound} implies
\[
(*) \hspace{.5in} n_0 + 2n_2 + 4n_+ \leq 3 + 2n_-
\]
 Observe that
\[
n = n_- + n_0 + n_2 + n_+ + N_k(1,1) + N(tr\theta)
\]
where $N(tr\theta)$ is 0 if no factor is a trivial $\theta$-curve and 1 if there is such a factor. Thus, from Inequality (*), we obtain the following inequalities:
\[
\begin{array}{rcl}
n + n_2 + 3n_+ &\leq& 3 + 3n_- + N_k(1,1) + N(tr\theta) \\
2n + 2n_+ & \leq &3 + 4n_- + n_0 + 2N_k(1,1) + 2N(tr\theta)\\
4n &\leq& 3 + 6n_- + 3n_0 + 2n_2 + 4N_k(1,1) + 4N(tr\theta).\\
\end{array}
\]
Move the constant 3 to the left, decrease the left hand side of each of those inequalities and increase the right hand side to obtain:
\[
\begin{array}{rcl}
n -3&\leq& 3(n_- + N_k(1,1) + N(tr\theta))\\
2n - 3 & \leq &  4(n_- + n_0 + N_k(1,1) + N(tr\theta))\\
4n - 3 & \leq & 6(n_- + n_0 + n_2 + N_k(1,1) + N(tr\theta)).\\
\end{array}
\]
These imply the inequalities we were looking for.
\end{proof}

\section*{Acknowledgements}
Taylor was supported by a Colby College Research Grant. Tomova was supported by an NSF research grant.

    \begin{bibdiv}
    \begin{biblist}
    
    \bib{BS1}{article}{
   author={Bleiler, Steven},
   author={Scharlemann, Martin},
   title={Tangles, property $P$, and a problem of J. Martin},
   journal={Math. Ann.},
   volume={273},
   date={1986},
   number={2},
   pages={215--225},
   issn={0025-5831},
   review={\MR{817877}},
   doi={10.1007/BF01451402},
}
\bib{BS2}{article}{
   author={Bleiler, Steven},
   author={Scharlemann, Martin},
   title={A projective plane in ${\bf R}^4$ with three critical points is
   standard. Strongly invertible knots have property $P$},
   journal={Topology},
   volume={27},
   date={1988},
   number={4},
   pages={519--540},
   issn={0040-9383},
   review={\MR{976593}},
   doi={10.1016/0040-9383(88)90030-4},
}

\bib{BuckODonnol}{article}{
author={Buck, Dorothy},
author={O'Donnol, Danielle},
title={Unknotting numbers for prime $\theta$-curves up to seven crossings},
eprint={https://arxiv.org/abs/1710.05237}
}

\bib{CalcutMetcalfBurton}{article}{
   author={Calcut, Jack S.},
   author={Metcalf-Burton, Jules R.},
   title={Double branched covers of theta-curves},
   journal={J. Knot Theory Ramifications},
   volume={25},
   date={2016},
   number={8},
   pages={1650046, 9},
   issn={0218-2165},
   review={\MR{3530305}},
   doi={10.1142/S0218216516500462},
}

\bib{CM}{article}{
   author={Cho, Sangbum},
   author={McCullough, Darryl},
   title={The tree of knot tunnels},
   journal={Geom. Topol.},
   volume={13},
   date={2009},
   number={2},
   pages={769--815},
   issn={1465-3060},
   review={\MR{2469530}},
   doi={10.2140/gt.2009.13.769},
}

\bib{Doll}{article}{
   author={Doll, H.},
   title={A generalized bridge number for links in $3$-manifolds},
   journal={Math. Ann.},
   volume={294},
   date={1992},
   number={4},
   pages={701--717},
   issn={0025-5831},
   review={\MR{1190452}},
   doi={10.1007/BF01934349},
}

\bib{EM}{article}{
   author={Eudave Mu\~{n}oz, Mario},
   title={Primeness and sums of tangles},
   journal={Trans. Amer. Math. Soc.},
   volume={306},
   date={1988},
   number={2},
   pages={773--790},
   issn={0002-9947},
   review={\MR{933317}},
   doi={10.2307/2000822},
}

\bib{EMO}{article}{
   author={Eudave-Mu\~{n}oz, Mario},
   author={Ozawa, Makoto},
   title={Composite tunnel number one genus two handlebody-knots},
   journal={Bol. Soc. Mat. Mex. (3)},
   volume={20},
   date={2014},
   number={2},
   pages={375--390},
   issn={1405-213X},
   review={\MR{3264623}},
   doi={10.1007/s40590-014-0035-5},
}

\bib{JKLMTZ}{article}{
   author={Jang, Byoungwook},
   author={Kronaeur, Anna},
   author={Luitel, Pratap},
   author={Medici, Daniel},
   author={Taylor, Scott A.},
   author={Zupan, Alexander},
   title={New examples of Brunnian theta graphs},
   journal={Involve},
   volume={9},
   date={2016},
   number={5},
   pages={857--875},
   issn={1944-4176},
   review={\MR{3541985}},
   doi={10.2140/involve.2016.9.857},
}

\bib{HS}{article}{
   author={Hayashi, Chuichiro},
   author={Shimokawa, Koya},
   title={Thin position of a pair (3-manifold, 1-submanifold)},
   journal={Pacific J. Math.},
   volume={197},
   date={2001},
   number={2},
   pages={301--324},
   issn={0030-8730},
   review={\MR{1815259}},
   doi={10.2140/pjm.2001.197.301},
}

\bib{HAM}{article}{
   author={Hog-Angeloni, C.},
   author={Matveev, S.},
   title={Roots in 3-manifold topology},
   conference={
      title={The Zieschang Gedenkschrift},
   },
   book={
      series={Geom. Topol. Monogr.},
      volume={14},
      publisher={Geom. Topol. Publ., Coventry},
   },
   date={2008},
   pages={295--319},
   review={\MR{2484706}},
   doi={10.2140/gtm.2008.14.295},
}

\bib{Gabai}{article}{
   author={Gabai, David},
   title={Foliations and the topology of $3$-manifolds. III},
   journal={J. Differential Geom.},
   volume={26},
   date={1987},
   number={3},
   pages={479--536},
   issn={0022-040X},
   review={\MR{910018}},
}

\bib{Goda}{article}{
   author={Goda, Hiroshi},
   title={Bridge index for theta curves in the $3$-sphere},
   journal={Topology Appl.},
   volume={79},
   date={1997},
   number={3},
   pages={177--196},
   issn={0166-8641},
   review={\MR{1467213}},
   doi={10.1016/S0166-8641(96)00177-0},
}

\bib{GR}{article}{
   author={Gordon, C. McA.},
   author={Reid, A. W.},
   title={Tangle decompositions of tunnel number one knots and links},
   journal={J. Knot Theory Ramifications},
   volume={4},
   date={1995},
   number={3},
   pages={389--409},
   issn={0218-2165},
   review={\MR{1347361}},
   doi={10.1142/S0218216595000193},
}

\bib{Kinoshita}{article}{
   author={Kinoshita, Shin'ichi},
   title={On elementary ideals of polyhedra in the $3$-sphere},
   journal={Pacific J. Math.},
   volume={42},
   date={1972},
   pages={89--98},
   issn={0030-8730},
   review={\MR{312485}},
}

\bib{Kobayashi}{article}{
   author={Kobayashi, Tsuyoshi},
   title={A construction of arbitrarily high degeneration of tunnel numbers
   of knots under connected sum},
   journal={J. Knot Theory Ramifications},
   volume={3},
   date={1994},
   number={2},
   pages={179--186},
   issn={0218-2165},
   review={\MR{1279920}},
   doi={10.1142/S0218216594000137},
}

\bib{KR}{article}{
   author={Kobayashi, Tsuyoshi},
   author={Rieck, Yo'av},
   title={Heegaard genus of the connected sum of $m$-small knots},
   journal={Comm. Anal. Geom.},
   volume={14},
   date={2006},
   number={5},
   pages={1037--1077},
   issn={1019-8385},
   review={\MR{2287154}},
}

\bib{Lackenby}{article}{
author={Lackenby, Marc},
title={Links with splitting number one},
url={https://arxiv.org/abs/1808.05495}
}

\bib{MT}{article}{
   author={Matveev, Sergei},
   author={Turaev, Vladimir},
   title={A semigroup of theta-curves in 3-manifolds},
   language={English, with English and Russian summaries},
   journal={Mosc. Math. J.},
   volume={11},
   date={2011},
   number={4},
   pages={805--814, 822},
   issn={1609-3321},
   review={\MR{2918296}},
   doi={10.17323/1609-4514-2011-11-4-805-814},
}

\bib{MR}{article}{
   author={Moriah, Yoav},
   author={Rubinstein, Hyam},
   title={Heegaard structures of negatively curved $3$-manifolds},
   journal={Comm. Anal. Geom.},
   volume={5},
   date={1997},
   number={3},
   pages={375--412},
   issn={1019-8385},
   review={\MR{1487722}},
   doi={10.4310/CAG.1997.v5.n3.a1},
}

\bib{Morimoto96}{article}{
   author={Morimoto, Kanji},
   title={Planar surfaces in a handlebody and a theorem of Gordon-Reid},
   conference={
      title={KNOTS '96 (Tokyo)},
   },
   book={
      publisher={World Sci. Publ., River Edge, NJ},
   },
   date={1997},
   pages={123--146},
   review={\MR{1664957}},
}

\bib{Morimoto00}{article}{
   author={Morimoto, Kanji},
   title={Tunnel number, connected sum and meridional essential surfaces},
   journal={Topology},
   volume={39},
   date={2000},
   number={3},
   pages={469--485},
   issn={0040-9383},
   review={\MR{1746903}},
   doi={10.1016/S0040-9383(98)00070-6},
}

\bib{Morimoto15}{article}{
   author={Morimoto, Kanji},
   title={On composite types of tunnel number two knots},
   journal={J. Knot Theory Ramifications},
   volume={24},
   date={2015},
   number={2},
   pages={1550013, 10},
   issn={0218-2165},
   review={\MR{3334664}},
   doi={10.1142/S0218216515500133},
}

\bib{MSY}{article}{
   author={Morimoto, Kanji},
   author={Sakuma, Makoto},
   author={Yokota, Yoshiyuki},
   title={Examples of tunnel number one knots which have the property
   ``$1+1=3$''},
   journal={Math. Proc. Cambridge Philos. Soc.},
   volume={119},
   date={1996},
   number={1},
   pages={113--118},
   issn={0305-0041},
   review={\MR{1356163}},
   doi={10.1017/S0305004100074028},
}

\bib{Motohashi1}{article}{
   author={Motohashi, Tomoe},
   title={$2$-bridge $\theta$-curves in $S^3$},
   journal={Topology Appl.},
   volume={108},
   date={2000},
   number={3},
   pages={267--276},
   issn={0166-8641},
   review={\MR{1794559}},
   doi={10.1016/S0166-8641(99)00139-X},
}

\bib{Motohashi2}{article}{
   author={Motohashi, Tomoe},
   title={A prime decomposition theorem for handcuff graphs in $S^3$},
   journal={Topology Appl.},
   volume={154},
   date={2007},
   number={18},
   pages={3135--3139},
   issn={0166-8641},
   review={\MR{2364641}},
   doi={10.1016/j.topol.2007.08.002},
}

\bib{Ozawa}{article}{
   author={Ozawa, Makoto},
   title={Morse position of knots and closed incompressible surfaces},
   journal={J. Knot Theory Ramifications},
   volume={17},
   date={2008},
   number={4},
   pages={377--397},
   issn={0218-2165},
   review={\MR{2414446}},
   doi={10.1142/S021821650800618X},
}

\bib{Ozawa-bridge}{article}{
   author={Ozawa, Makoto},
   title={Bridge position and the representativity of spatial graphs},
   journal={Topology Appl.},
   volume={159},
   date={2012},
   number={4},
   pages={936--947},
   issn={0166-8641},
   review={\MR{2876700}},
   doi={10.1016/j.topol.2011.11.026},
}

\bib{OT}{article}{
   author={Ozawa, Makoto},
   author={Tsutsumi, Yukihiro},
   title={Minimally knotted spatial graphs are totally knotted},
   journal={Tokyo J. Math.},
   volume={26},
   date={2003},
   number={2},
   pages={413--421},
   issn={0387-3870},
   review={\MR{2020794}},
   doi={10.3836/tjm/1244208599},
}

\bib{Scharlemann-Brunnian}{article}{
   author={Scharlemann, Martin},
   title={Some pictorial remarks on Suzuki's Brunnian graph},
   conference={
      title={Topology '90},
      address={Columbus, OH},
      date={1990},
   },
   book={
      series={Ohio State Univ. Math. Res. Inst. Publ.},
      volume={1},
      publisher={de Gruyter, Berlin},
   },
   date={1992},
   pages={351--354},
   review={\MR{1184420}},
}

\bib{SS}{article}{
   author={Scharlemann, Martin},
   author={Schultens, Jennifer},
   title={The tunnel number of the sum of $n$ knots is at least $n$},
   journal={Topology},
   volume={38},
   date={1999},
   number={2},
   pages={265--270},
   issn={0040-9383},
   review={\MR{1660345}},
   doi={10.1016/S0040-9383(98)00002-0},
}

\bib{ST1}{article}{
   author={Scharlemann, Martin},
   author={Thompson, Abigail},
   title={Thin position and Heegaard splittings of the $3$-sphere},
   journal={J. Differential Geom.},
   volume={39},
   date={1994},
   number={2},
   pages={343--357},
   issn={0022-040X},
   review={\MR{1267894}},
}
\bib{ST2}{article}{
   author={Scharlemann, Martin},
   author={Thompson, Abigail},
   title={Thin position for $3$-manifolds},
   conference={
      title={Geometric topology},
      address={Haifa},
      date={1992},
   },
   book={
      series={Contemp. Math.},
      volume={164},
      publisher={Amer. Math. Soc., Providence, RI},
   },
   date={1994},
   pages={231--238},
   review={\MR{1282766}},
   doi={10.1090/conm/164/01596},
}

\bib{Schirmer}{article}{
   author={Schirmer, Trenton},
   title={A lower bound on tunnel number degeneration},
   journal={Algebr. Geom. Topol.},
   volume={16},
   date={2016},
   number={3},
   pages={1279--1308},
   issn={1472-2747},
   review={\MR{3523039}},
   doi={10.2140/agt.2016.16.1279},
}

\bib{Schubert}{article}{
   author={Schubert, Horst},
   title={\"{U}ber eine numerische Knoteninvariante},
   language={German},
   journal={Math. Z.},
   volume={61},
   date={1954},
   pages={245--288},
   issn={0025-5874},
   review={\MR{72483}},
   doi={10.1007/BF01181346},
}

\bib{TT1}{article}{
   author={Taylor, Scott A.},
   author={Tomova, Maggy},
   title={Thin position for knots, links, and graphs in 3-manifolds},
   journal={Algebr. Geom. Topol.},
   volume={18},
   date={2018},
   number={3},
   pages={1361--1409},
   issn={1472-2747},
   review={\MR{3784008}},
   doi={10.2140/agt.2018.18.1361},
}

\bib{TT2}{article}{
   author={Taylor, Scott},
   author={Tomova, Maggy},
   title={Additive invariants for knots, links and graphs in 3-manifolds},
   journal={Geom. Topol.},
   volume={22},
   date={2018},
   number={6},
   pages={3235--3286},
   issn={1465-3060},
   review={\MR{3858764}},
   doi={10.2140/gt.2018.22.3235},
}

\bib{Wolcott}{article}{
   author={Wolcott, Keith},
   title={The knotting of theta curves and other graphs in $S^3$},
   conference={
      title={Geometry and topology},
      address={Athens, Ga.},
      date={1985},
   },
   book={
      series={Lecture Notes in Pure and Appl. Math.},
      volume={105},
      publisher={Dekker, New York},
   },
   date={1987},
   pages={325--346},
   review={\MR{873302}},
}

  \end{biblist}
    \end{bibdiv}

\end{document}